\documentclass[oneside,english,ngerman,british]{amsart}
\usepackage{amsaddr}
\usepackage{palatino}
\usepackage[T1]{fontenc}
\usepackage[latin9]{inputenc}
\usepackage{textcomp}
\usepackage{mathtools}
\usepackage{amstext}
\usepackage{amsthm}
\usepackage{amssymb}
\usepackage{stackrel}
\usepackage{graphicx}
\usepackage{caption}
\usepackage{setspace}
\usepackage{wasysym}
\usepackage{enumitem}
\usepackage[usenames]{color}
\usepackage{colortbl,todonotes}
\usepackage{amsmath} 
{
      \theoremstyle{plain}
      \newtheorem{assumption}{Assumption}
  }

\makeatletter
\numberwithin{equation}{section}
\numberwithin{figure}{section}
\theoremstyle{plain}
\newtheorem{thm}{\protect\theoremname}
\theoremstyle{plain}

\theoremstyle{plain}
\newtheorem{lem}[thm]{\protect\lemmaname}
\graphicspath{{pictures/}}
\DeclareGraphicsExtensions{.pdf,.png,.jpg}
\makeatother

\usepackage{babel}
\addto\captionsbritish{\renewcommand{\lemmaname}{Lemma}}
\addto\captionsbritish{\renewcommand{\propositionname}{Proposition}}
\addto\captionsbritish{\renewcommand{\theoremname}{Theorem}}
\addto\captionsenglish{\renewcommand{\lemmaname}{Lemma}}
\addto\captionsenglish{\renewcommand{\propositionname}{Proposition}}
\addto\captionsenglish{\renewcommand{\theoremname}{Theorem}}
\addto\captionsngerman{\renewcommand{\lemmaname}{Lemma}}
\addto\captionsngerman{\renewcommand{\propositionname}{Satz}}
\addto\captionsngerman{\renewcommand{\theoremname}{Theorem}}
\providecommand{\lemmaname}{Lemma}
\providecommand{\propositionname}{Proposition}
\providecommand{\theoremname}{Theorem}

\begin{document}

\title{Spectral bootstrap confidence bands for L\'evy-driven
moving average processes\footnote{The support of German Science Foundation research grant (DFG Sachbeihilfe) 406700014 is gratefully acknowledged.
}}
\author{D. Belomestny, E. Ivanova and T. Orlova}
\address{Duisburg-Essen University}
\begin{abstract}
In this paper we study the problem of constructing bootstrap confidence
intervals for the L\'evy density of the driving
L\'evy  process based on high-frequency observations
of a L\'evy-driven moving average processes.
Using a spectral estimator of the L\'evy 
density, we propose a novel implementations of multiplier and empirical
bootstraps to construct confidence bands on a compact set away from
the origin. We also provide conditions under which the confidence
bands are asymptotically valid. 
\end{abstract}

\maketitle

\section{Introduction}

The continuous-time L\'evy-driven moving average processes
are defined as 
\begin{eqnarray}
Z_{t}=\int_{-\infty}^{\infty}\mathcal{K}(t-s)\, dL_{s}\label{Zt}
\end{eqnarray}
where $\mathcal{K}$ is a deterministic kernel and $L=\left(L_{t}\right)_{t\in\mathbb{R}}$
is a two-sided L\'evy process with a L\'evy triplet $(\gamma,\sigma,\nu)$.
The conditions which guarantee that this integral is well-defined
are given in the pioneering work by Rajput and Rosinski \cite{rajput1989spectral}. 
For instance, if $\int x^{2}\nu\left(dx\right)<\infty$, it is sufficient to
assume that $\mathcal{K}\in\mathcal{L}^{1}\left(\mathbb{R}\right)\cap\mathcal{L}^{2}(\mathbb{R})$.
\par
Continuous-time L\'{e}vy-driven moving average processes (and slightly modified versions of them) are widely used for the construction of many popular models such as L\'{e}vy-driven Ornstein-Uhlenbeck processes, fractional L\'{e}vy processes, CARMA processes, L\'{e}vy semistationary processes and ambit fields, cf. Barndorff-Nielsen, Benth and Veraart \cite{Barndorff-NielsenBenthVeraart2015}, Podolskij  \cite{Pod}.  Most of these models  can be applied to financial and physical problems. For instance, the choice  $\mathcal{K}(t)=t^{\alpha}e^{-\lambda t}1_{[0,\infty)}(t)$
with $\lambda>0$ and $\alpha>-1/2$ (known as Gamma-kernel) is used for modeling volatility and turbulence, see e.g. Barndorff-Nielsen and Schmiegel \cite{Barndorff-NielsenSchmiegel2009}. Otherwise, the choice $\mathcal{K}(t)=e^{-\lambda|t|}$ (known as well-balanced Ornstein-Uhlenbeck process) can be used for the analysis of the SAP high-frequency data, see Schnurr and Woerner~\cite{WD}.
\par
This paper is devoted to  statistical inference for  continuous-time L\'evy-driven moving average processes. Assuming that the high-frequency equidistant observations of the process \((Z_t)\) are given, we aim to estimate the characteristic triplet of the process \((L_t)\). Recently, Belomestny, Panov and Woerner \cite{BPW} considered the
statistical estimation of the L\'evy measure $\nu$ from the low-frequency
observations of the process $(Z_{t})$. The approach presented in
\cite{BPW} is rather general - in particular, it works well under
various choices of $\mathcal{K}$. Nevertheless, this approach is
based on the superposition of the Mellin and Fourier transforms of
the L\'evy measure, and therefore its practical implementation can meet
some computational difficulties. 
In \cite{belomestny2019statistical}, another method was presented, which essentially uses the following theoretical observation.  For any kernel \(\mathcal{K}\), the characteristic function \(\Phi(u):=\mathsf{E}\left[e^{\mathrm{i}uZ_{t}}\right]\) of the process \((Z_t)\) and the characteristic exponent \(\psi(u) \) of the process \((L_t)\) are connected via the formula: 
\begin{eqnarray*}
\Phi(u)=\exp\left (
\int_{\mathbb{R}}\psi(u\,\mathcal{K}(s))\, ds
\right).
\end{eqnarray*}
It was noted in \cite{belomestny2019statistical} that under the choice  \(\mathcal{K}(x)=\left(1-\alpha|x|\right)_{+}^{\frac{1}{\alpha}}\) this formula can be inverted without use of an additional integral transformations, that is, the function \(\psi\) can be represented via \(\Phi\) and its derivatives. Therefore, the characteristic exponent
can be estimated from the observations of the process $(Z_t),$ and further
application of the Fourier techniques leads to 
a consistent estimator of the L\'evy triplet.  
\par
The current  paper is devoted to the estimation of the L\'evy measure \(\nu\) in the same model as in \cite{belomestny2019statistical} but based on high-frequency observations of the process \((Z_t).\) Moreover, we are interested in  uniform bootstrap confidence bands for \(\nu. \) We propose a novel implementation of the multiplier and empirical
bootstrap procedures to construct confidence bands on a compact set away from
the origin. We also provide conditions under which the confidence
bands are asymptotically valid. Our approach can be viewed as an extension of the recent work \cite{kato2017bootstrap} where bootstrap confidence bands are constructed for the case of high-frequency observations of the L\'evy process \((L_t)_{t\geq 0}\) itself. 
\par
The paper is organised as follows. In Section~\ref{sec:setup} we formulate   our main statistical problem and propose an estimator for the underlying L\'evy density \(\nu\). We also discuss how to construct confidence bands for \(\nu\). Section~\ref{sec:conf} contains a detailed description of the bootstrap procedure and results on the validity of the bootstrap confidence bands. Some numerical results on simulated data are shown in Section~\ref{sec:num}. Finally, in Section~\ref{sec:proofs} all proofs are collected. 

\section{Set-up}
\label{sec:setup}
We shall consider continuous-time L\'evy-driven
moving average processes $(Z_{t})_{t\geq 0}$  of the form:
\begin{eqnarray}
Z_{t}=\int_{-\infty}^{\infty}\mathcal{K}(t-s)\,dL_{s},\label{Zt}
\end{eqnarray}
where $\mathcal{K}$ is a symmetric kernel given by
\begin{eqnarray}
\mathcal{K}_{\alpha}(x)=
\begin{cases}
\left(1-\alpha|x|\right)^{\frac{1}{\alpha}}, & \quad|x|\leq\alpha^{-1}, 
\\
0, & \mathrm{else}
\end{cases}
\label{Kalpha}
\end{eqnarray}
for some $\alpha\in(0,1),$  $L=\left(L_{t}\right)_{t\in\mathbb{R}}$ is a
two-sided L\'evy process with the L\'evy triplet $\left(\gamma,\sigma,\nu\right)$. Note that as a limiting case for $\alpha\to0$ we get the exponential kernel $\mathcal{K}_{0}(x)=\exp\left(-x\right)$. It follows from \cite{rajput1989spectral} that the process \((Z_t)\) is well-defined and infinitely divisible with the characteristic function: 
\begin{eqnarray*}
\mathsf{E}\left[e^{iuZ_{t}}\right]=\exp\left\{ \mathrm{i} u\gamma_{Z}(t)-\frac{1}{2}u^{2}\sigma_{Z}^{2}(t)+\int_{\mathbb{R}}\left[e^{\mathrm{i} ux}-1-iux\mathrm{1}_{\{|x|\leq1\}}\right]\,\nu_{Z}(t,dx)\right\}, 
\end{eqnarray*}
where 
\begin{eqnarray*}
\gamma_{Z}(t)=\gamma\int\mathcal{K}(t-s)\,ds+\int\int x\mathcal{K}(t-s)\left[\mathrm{1}_{\{|x\mathcal{K}(t-s)|\leq1\}}-\mathrm{1}_{\{|x|\leq1\}}\right]\nu(dx)\,ds,
\end{eqnarray*}
\begin{eqnarray*}
\sigma_{Z}^{2}(t)=\sigma^{2}\int\mathcal{K}^{2}(t-s)\,ds
\end{eqnarray*}
and
\begin{eqnarray*}
\nu_{Z}(t,dx)=\int\int\mathrm{1}_{B}\left(x\,\mathcal{K}(t-s)\right)\nu(dx)\,ds,\quad B\in\mathcal{B}(\mathbb{R}).
\end{eqnarray*}
Furthermore,  under our choice of the kernel function $\mathcal{K}$, we can represent the characteristic exponent $\psi$ of the L\'evy-process $(L_{t})$ via the characteristic function $\Phi_\Delta$ of the increments  $Z_{t+\Delta}-Z_t$.  We explicitly derive  for $\Psi_\Delta(u):=\log(\Phi_\Delta(u))$ (see Lemma~\ref{LemmaPhi}),
\begin{equation}
\label{eq:Phi}
\Psi_\Delta(u)=\int_{-\infty}^{\infty}\psi\left(u\left(\mathcal{K_{\alpha}}(x+\Delta)-\mathcal{K_{\alpha}}(x)\right)\right)\,dx
=(\mathcal{L}_\alpha\psi) (\Delta u)+S_\Delta(u),
\end{equation}
where the operator $\mathcal{L}_\alpha$ is defined as
\begin{eqnarray*}
(\mathcal{L}_\alpha f)(x):=\frac{2}{1-\alpha}\,x^{-\frac{\alpha}{1-\alpha}}\int_{0}^{x}f\left(z\right)z^{\frac{2\alpha-1}{1-\alpha}}\,dz
\end{eqnarray*}
for any locally bounded function $f$ and 
\begin{eqnarray}
\psi(u)  =  \mathrm{i}\gamma u-\frac{1}{2}\sigma^{2}u^{2}+\int_{\mathbb{R}}\left(e^{\mathrm{i}ux}-1-\mathrm{i}ux\mathrm{1}_{\{\left|x\right|\leq1\}}\right)\nu(dx).
 \label{LK1}
\end{eqnarray}
Moreover, if 
\begin{eqnarray}
\label{eq:mom-p}
\int\left|x\right|^{p}\nu(x)\,dx<\infty
\end{eqnarray}
for some natural $p>1$ then the function $S_\Delta$ satisfies  (see Lemma~\ref{SDeltap})
\begin{eqnarray}
\label{eq:Scond}
\lim_{\Delta\to0}\Delta^{-(l+\alpha)}S^{(l)}_\Delta(u/\Delta)=0,\quad l=0,\ldots,p,
\end{eqnarray}
and as a result we have convergence
\begin{eqnarray}
\label{eq:lim-psi}
\Psi_\Delta(u/\Delta)\to \Psi(u):=\mathcal{L}_\alpha\psi (u),\quad u\in \mathbb{R}
\end{eqnarray}
for \(\Delta\to 0.\)
Furthermore by inverting the operator $\mathcal{L}_\alpha,$ we get from \eqref{eq:Phi}
\begin{equation*}
\psi''(u)=(1/\Delta)^2(\mathcal{L}^{-1}_\alpha \Psi_\Delta)''(u/\Delta)-(1/\Delta)^2(\mathcal{L}^{-1}_\alpha S_{\Delta})''(u/\Delta)
\end{equation*}
with
\begin{eqnarray}
\label{eq:L-1}
(\mathcal{L}^{-1}_\alpha f)(x):=\frac{\alpha}{2}f(x)+\frac{1-\alpha}{2}x f'(x).
\end{eqnarray}
On the other hand, under the condition \eqref{eq:mom-p} with $p=2$, we obtain from \eqref{LK1},
\begin{equation*}
\psi''\left(u\right)=-\sigma^{2}-\int_{\mathbb{R}}e^{\mathrm{i}u x}\rho\left(x\right)dx,
\end{equation*}
where $\rho(x):=x^2\nu(x).$
Therefore, we can apply the inverse Fourier transform to get 
\begin{equation}
\rho(x)=-\frac{1}{2\pi\Delta^{2}}\int_{\mathbb{R}}e^{-\mathrm{i}u x}[(\mathcal{L}^{-1}_\alpha \Psi_\Delta)''(u/\Delta)+\Delta^2\sigma^2]\, du-R_{\Delta}(u),
\label{eq:rho}
\end{equation}
where 
\[
R_{\Delta}(u):=\int_{\mathbb{R}}e^{-\mathrm{i}u x}r_\Delta(u)\,du, \quad r_\Delta(u):=(1/\Delta)^2(\mathcal{L}^{-1}_\alpha S_{\Delta})''(u/\Delta).
\]
  In view of \eqref{eq:Scond} and \eqref{eq:L-1}, the term $R_{\Delta}$ is of smaller order in $\Delta$ than the first term in \eqref{eq:rho} and we can consider the limiting case \eqref{eq:lim-psi} in \eqref{eq:rho}. 
\par
In this work we assume  that we observe a discretised (high-frequency) trajectory of the limiting L\'evy process  \(X_0,X_{\Delta},\ldots,X_{n\Delta}\) with  characteristic function $\Phi(u):=\mathsf{E}[\exp(\mathrm{i} uX_1)]=\exp(\Psi(u))=\exp(\mathcal{L}_\alpha\psi (u)).$ This assumption is mainly done to simplify analysis and avoid difficulties related to the time dependence structure of the process \((Z_t).\) Still the main features of the underlying inverse problem (e.g. the structure of the inverse operator  
\(\mathcal{L}^{-1}_\alpha\)) remains reflected in our statistical analysis. An extension to the case where one directly observes the process $(Z_t)_{t\geq 0}$  will also be discussed.  
\par
Let us now describe our estimation procedure. Let $W$ be an integrable kernel function such that 
\[
\int_\mathbb{R}  W(x)\,dx=1,\quad \int_\mathbb{R} |x|^{p+1} |W(x)|\,dx<\infty, \quad \int_{\mathbb{R}}  x^{l} W(x)\,dx=0,\quad l=1,\ldots,p, 
\] and suppose that the Fourier transform $\varphi_{W}$ of \(W\) is   supported in $\left[-1,1\right].$ 
Motivated by \eqref{eq:rho}, we propose to estimate  $\rho$    via the estimator: 
\begin{equation}
\begin{split}\widehat{\rho}_{n}(x) &:= -\frac{1}{2\pi}\int_{\mathbb{R}}e^{-\mathrm{i}u x}\left[(\mathcal{L}^{-1}_\alpha \widehat\Psi)''(u)+\widehat{\sigma}_{n}^{2}\right]\varphi_{W}(uh_n)\,du,
\end{split}
\label{nuest}
\end{equation}
 where $\widehat{\Psi}:=\Delta^{-1}\log(\widehat{\Phi}_{\Delta X})$ with 
\begin{eqnarray*}
\widehat{\Phi}_{\Delta X}(u):=\frac{1}{n}\sum_{j=1}^{n} e^{\mathrm{i}u  (\Delta X)_j},\quad u\in\mathbb{R},
\end{eqnarray*}
 \((\Delta X)_j:=X_{\Delta j}-X_{\Delta (j-1)},\)
 $h_{n}$ is a sequence of positive numbers
(bandwidths) such that $h_{n}\to 0$ as $n\to \infty$,
and $\widehat{\sigma}_{n}^{2}$ is an estimator of $\sigma^{2}.$
Our aim  is to construct confidence bands for the transformed L\'evy density $\rho$ on a compact set $I$ in $\mathbb{R}\setminus \{0\}$ and to prove validity of the proposed confidence bands.  
To this end, we shall use  the Gaussian multiplier (or wild) bootstrap.

\section{Main results}
\label{sec:conf}

\subsection{Construction of confidence bands}
Using the equations \eqref{eq:rho} and \eqref{nuest}, the difference  $\widehat{\rho}_{n}(x) -\rho(x)$ can be represented as
\begin{equation}
\label{ZerlDichte}
\widehat{\rho}_{n}(x)-\rho(x)=\underbrace{\bigl(\widehat{\rho}_{n}(x)-\widetilde{\rho}(x)\bigr)}_{R_n(x)}+\underbrace{\bigl(\widetilde{\rho}(x)-\rho(x)\bigr)}_{I_{\sigma^2_n}(x)+I_{\rho_n}(x)},
 \end{equation}
 where
 \begin{equation}
 \label{rhoTilde}
 \begin{split}
&\widetilde{\rho}(x):=-\frac{1}{2\pi\Delta}\int_{\mathbb{R}}e^{-\mathrm{i}ux}\bigl[(\mathcal{L}^{-1}_\alpha \Psi)''(u)+\Delta\sigma^{2}\bigr]\varphi_{W}(uh_n)\,du,\\
&I_{\sigma^2_n}(x):=-\frac{1}{2\pi}\int_{\mathbb{R}}e^{-\mathrm{i}u x}(\widehat{\sigma}^{2}_{n}-\sigma^{2})\varphi_{W}(uh_n)\,du,\\
&I_{\rho_n}(x):=\bigl[\rho\ast(h_n^{-1}W(\cdot/h_n))\bigr](x)-\rho(x).
\end{split}
 \end{equation}
Later we show that under suitable assumptions (Assumption~\ref{assumption1}), the   terms $I_{\rho_n}$ and $I_{\sigma^2_n}$ are asymptotically (as \(n\to \infty\) and \(\Delta\to 0\)) smaller than $R_n$ and hence can be neglected  when constructing the confidence interval for the transformed L\'evy density $\rho.$
Further note that 
\begin{eqnarray*}
(\mathcal{L}^{-1}_\alpha \widehat\Psi)''(u)-(\mathcal{L}^{-1}_\alpha \Psi)''(u)&=& Q_{0}(u)\mathbb{D}(u)+Q_{1}(u)\mathbb{D}'(u)
\\
&&+Q_{2}(u)\mathbb{D}''(u)+Q_{3}(u)\mathbb{D}'''(u),
\end{eqnarray*}
where $\mathbb{D}(u):=\widehat{\Phi}_{\Delta X}(u)-\Phi_{\Delta X}(u),$ \(\Phi_{\Delta X}(u):=\exp(\Delta\mathcal{L}_\alpha\psi (u))\)  and
\begin{equation}
\label{Qm0}
\begin{split}
Q_{0}(u)&=\frac{1}{\Phi_{\Delta X}(u)}\left(\frac{2-\alpha}{2}\left(2\left(\frac{\Phi'_{\Delta X}(u)}{\Phi_{\Delta X}(u)}\right)^{2}-\frac{\Phi''_{\Delta X}(u)}{\Phi_{\Delta X}(u)}\right)\right.\\
&\left.\hspace{1cm}+\frac{1-\alpha}{2} u\left(-\frac{\Phi'''_{\Delta X}(u)}{\Phi_{\Delta X}(u)}+6\frac{\Phi''_{\Delta X}(u)\Phi'_{\Delta X}(u)}{(\Phi_{\Delta X}(u))^2}-4\left(\frac{\Phi'_{\Delta X}(u)}{\Phi_{\Delta X}(u)}\right)^{3}\right)\right)\\
&=\frac{\Delta}{\Phi_{\Delta X}(u)}\left(\frac{2-\alpha}{2}\left(\Delta(\Psi'(u))^{2}-\Psi''(u)\right)\right.\\
&\left.\hspace{2cm}-u\frac{1-\alpha}{2}\left(\Psi'''(u)-3\Delta\Psi''(u)\Psi'(u)+\Delta^2(\Psi'(u))^3\right)\right),
\end{split}
\end{equation}
\begin{equation}
\label{Qm1}
\begin{split}
Q_{1}(u)&=\frac{1}{\Phi_{\Delta X}(u)}\left(3u\left(1-\alpha\right)\left(\frac{\Phi'_{\Delta X}(u)}{\Phi_{\Delta X}(u)}\right)^{2}-3u\frac{\Phi''_{\Delta X}(u)}{\Phi_{\Delta X}(u)}\frac{1-\alpha}{2}-(2-\alpha)\frac{\Phi'_{\Delta X}(u)}{\Phi_{\Delta X}(u)}\right)\\
	&=\frac{\Delta}{\Phi_{\Delta X}(u)}\left(3u\frac{1-\alpha}{2}\left(\Delta(\Psi'(u))^{2}-\Psi''(u)\right)-(2-\alpha)\Psi'(u)\right),
\end{split}
\end{equation}
\begin{equation}
\label{Qm2}
\begin{split}
Q_{2}(u)	&=\frac{1}{\Phi_{\Delta X}(u)}\left(\frac{2-\alpha}{2}-3u\frac{\Phi'_{\Delta X}(u)}{\Phi_{\Delta X}(u)}\frac{1-\alpha}{2}\right)\\
&=\frac{1}{\Phi_{\Delta X}(u)}\left(\frac{2-\alpha}{2}-3u\Delta\Psi'(u)\frac{1-\alpha}{2}\right),
\end{split}
\end{equation}
\begin{equation}
\label{Qm3}
Q_{3}(u)=\frac{1}{\Phi_{\Delta X}(u)}\left(u\frac{1-\alpha}{2}\right).
\end{equation}
With the above notations $R_n(x)$ becomes
\begin{equation}
\label{deltanu}
\begin{split}
R_n(x)&=-\frac{1}{2\pi\Delta}\int_{\mathbb{R}}e^{-\mathrm{i}u x}\bigl[Q_{0}(u)\mathbb{D}(u)+Q_{1}(u)\mathbb{D}'(u)\bigr.\\
&\bigl.\hspace{2cm}+Q_{2}(u)\mathbb{D}''(u)+Q_{3}(u)\mathbb{D}'''(u)\bigr]\varphi_{W}(uh_n)\, du
\end{split}
\end{equation}
or alternatively
\begin{equation}
\label{Rn}
\begin{split}
R_{n}(x)&=\frac{1}{n\Delta}\stackrel[m=0]{3}{\sum}\biggl(\stackrel[j=1]{n}{\sum}\bigl\{ \mathrm{i}^{m}(\Delta X)_{j}^{m}K_{m,n}(x-(\Delta X)_{j})\bigr.\biggr.\\
&\biggl.\bigl.\hspace{3cm}-\mathsf{E}\bigl[\mathrm{i}^{m}(\Delta X)_{1}^{m}K_{m,n}(x-(\Delta X)_{1})\bigr]\bigr\}\biggr),
\end{split}
\end{equation}
where the kernel functions $K_{m,n}(z),\,m=0,1,2,3,$ are defined as
\begin{equation*}
K_{m,n}(z):=-\frac{1}{2\pi}\int_{\mathbb{R}}e^{-\mathrm{i}uz}Q_{m}(u)\varphi_{W}(uh_{n})\,du.
\end{equation*}
The representation \eqref{Rn} is crucial for our analysis. Consider now the process
\begin{equation}
T_{n}(x):= \frac{\sqrt{n}\Delta}{s(x)}R_{n}(x),
\label{Tn}
\end{equation}
where $s^{2}(x)$ is given by
\begin{equation}
\label{VarRn}
s^{2}(x):= \mathsf{Var}\biggl[\stackrel[m=0]{3}{\sum}\mathrm{i}^{m}(\Delta X)_{1}^{m}K_{m,n}(x-(\Delta X)_{1})\biggr]
\end{equation}
 Under some conditions,  we shall show that there exists a tight $\ell^{\infty}(I)$-sequence 
of Gaussian random variables $T_{n}^{G}$  with zero mean  and the same covariance function as one of $\,T_{n}$, and such that the distribution of $\Vert  T_{n}^{G}\Vert_{I}:=\sup_{x\in I}|T_{n}^{G}(x)|$
asymptotically approximates the distribution of $\Vert T_{n}\Vert_{I}$
in the sense that
\begin{equation*}
\underset{z\in\mathbb{R}}{\sup}\bigl|\mathrm{P}\left\{ \bigl\Vert T_{n}\bigr\Vert _{I}\leq z\right\} -\mathrm{P}\left\{ \bigl\Vert T_{n}^{G}\bigr\Vert_{I}\leq z\right\} \bigr|\to 0, \quad n\to \infty.
\end{equation*}
Accordingly, the construction of confidence
bands reduces to  estimating the quantiles of the r.v. $\Vert T_{n}^{G}\Vert_{I}$. To this end we shall use bootstrap. 
Define
\begin{equation*}
\begin{split}c_{n}^{G}(1-\tau)  :=\inf\bigl\{z\in\mathbb{R}:\ \mathrm{P}\left\{\bigl\Vert T_{n}^{G}\bigr\Vert_{I}\leq z\bigr\} \geq1-\tau\right\} 
\end{split}
\end{equation*}
 for $\tau\in(0,1),$ then the \(1-\tau\)-confidence band for \(\rho\) is of the form:
 \begin{equation*}
\widehat{\mathcal{C}}_{1-\tau}(x)=\left[\widehat{\rho}_{n}(x)-\frac{s(x)}{\sqrt{n}\Delta}c_{n}^{G}(1-\tau),\,\widehat{\rho}_{n}(x)+\frac{s(x)}{\sqrt{n}\Delta}c_{n}^{G}(1-\tau)\right],\quad x\in I.
\end{equation*}
Since $\rho(x)\in\widehat{\mathcal{C}}_{1-\tau}(x)$ for all $x\in I$  means that 
\begin{equation*}
\left\Vert \frac{\sqrt{n}\Delta(\widehat{\rho}_{n}(\cdot)-\rho(\cdot))}{s(\cdot)}\right\Vert_{I} \leq c_{n}^{G}(1-\tau),
\end{equation*}
we can show that
\begin{equation*}
\mathrm{P}\left\{\rho(x)\in\widehat{\mathcal{C}}_{1-\tau}(x),\quad \forall \, x\in I\right\} =\mathrm{P}\bigl\{\bigl\Vert T_{n}^{G}\bigr\Vert_{I}\leq c_{n}^{G}(1-\tau)\bigr\}+o(1)
\end{equation*}
as \(n\to \infty.\)
Hence $\widehat{\mathcal{C}}_{1-\tau}(x)$ is a valid confidence band for $\rho$ on $I$ with an approximate level 
$1-\tau$. However, we still need to estimate the quantile $c_{n}^{G}(1-\tau).$
In what follows we consider  the
Gaussian multiplier (or wild) bootstrap to estimate the quantile $c_{n}^{G}(1-\tau)$. \subsection*{\emph{Gaussian multiplier bootstrap.}}
The main idea of the Gaussian multiplier bootstrap consists in reweighting estimated influence functions using
mean zero and unit variance pseudo-random variables, see, e.g.    \cite{chernozhukov2013gaussian} for more details. On the one hand, the advantage of this method  compared to the conventional bootstrap is that we can avoid recomputing the estimator in each bootstrap repetition, and as a result we reduce the calculation time. On the other hand, one of the disadvantages of the Gaussian multiplier bootstrap is that it is necessary to obtain an analytical expression for the corresponding influence  function.       
In our case, this method will be used as follows.
First we simulate $N$ independent centred Gaussian random variables  $\omega_{1},\ldots\omega_{n}\sim N(0,1)$, 
independent of the data $\mathsf{D}_{n}=\bigl\{(\Delta X)_j\bigr\}_{j=0}^{n}$ and  
construct the multiplier process $\widehat{T}_{n}^{MB}(x)$ of the form: 
\begin{equation}
\begin{split}
\widehat{T}_{n}^{MB}(x):=& \frac{1}{\widehat{s}_{n}(x)\sqrt{n}}\biggl(\stackrel[m=0]{3}{\sum}\biggl(\stackrel[j=1]{n}{\sum}\omega_{j}\bigl\{ \mathrm{i}^{m}(\Delta X)_{j}^{m}\widehat{K}_{m,n}(x-(\Delta X)_{j})\bigr.\biggr.\biggr.\\
&\biggl.\biggl.\bigl.\hspace{2cm}-n^{-1}\stackrel[j'=1]{n}{\sum}\mathrm{i}^{m}(\Delta X)_{j'}^{m}\widehat{K}_{m,n}(x-(\Delta X)_{j'})\bigr\} \biggr)\biggr),
\label{TnMB}
\end{split}
\end{equation} 
where
 \begin{equation}
\widehat{s}^{2}_{n}(x):=\mathsf{Var}\biggl[\stackrel[m=0]{3}{\sum}\mathrm{i}^{m}(\Delta X)_{1}^{m}\widehat{K}_{m,n}(x-(\Delta X)_{1})\biggr], \quad x\in I,
\label{sigmaest}
\end{equation}
\begin{equation*}
\widehat{K}_{m,n}(z)=-\frac{1}{2\pi}\int_{\mathbb{R}}e^{-\mathrm{i}uz}\widehat{Q}_{m}(u)\varphi_{W}(uh_n)\,du
\end{equation*}
and  $\widehat{Q}_{m}(u)$ is based on a bootstrapped  version of the empirical characteristic function \(\widehat{\Phi}_{\Delta X}\):
\begin{eqnarray*}
\widehat{\Phi}^{MB}_{\Delta X}(u):=\frac{1}{n}\sum_{j=1}^{n} \omega_j e^{\mathrm{i}u (\Delta X)_j},\quad u\in\mathbb{R}.
\end{eqnarray*}
 Furthermore, we estimate $c_{n}^{G}(1-\tau)$ using quantile 
$\widehat{c}_{n}^{MB}(1-\tau)$ of the distribution of $\Vert \widehat{T}_{n}^{MB}\Vert _{I},$
conditional on the data $\mathsf{D}_{n}.$ The latter quantity
can be computed via simulations. As a result, the confidence band
takes the form
\begin{equation}
\label{KonfIntMB}
\widehat{\mathcal{C}}_{1-\tau}^{MB}(x):=\biggl[\widehat{\rho}_{n}(x)-\frac{\widehat{s}_{n}(x)}{\sqrt{n}\Delta}\widehat{c}_{n}^{MB}(1-\tau),\widehat{\rho}_{n}(x)+\frac{\widehat{s}_{n}(x)}{\sqrt{n}\Delta}\widehat{c}_{n}^{MB}(1-\tau)\biggr],\quad x\in I.
\end{equation}

\subsection{Validity of bootstrap confidence bands.} 

 In this section, we will present the main result, which proves the validity of the confidence band $\widehat{\mathcal{C}}^{MB}_{1-\tau}(x)$ .
\begin{assumption}
\label{assumption1}
We assume that the following conditions are fulfilled.
\begin{enumerate}[label=(\roman*)] 
\item $\int_{\mathbb{R}}|x|^{6+\varsigma}\nu(x)\,dx<\infty$
for some $\varsigma\in[0,1].$
\label{ass1}
\item  Let $r>0$ and let $p$ be an integer such that $p<r\leq p+1$.
The function $\rho$ is p-times differentiable, and $(\rho)^{p}$
is $(r-p)$ -Hölder\footnote{ The function $f\ :\ \mathbb{\mathbb{R}\to R}$ is called $\alpha-$Hölder continuous for $\alpha\in(0,1]$, if 

$\underset{x,y\in\mathbb{R},x\neq y}{\sup}\frac{|f(y)-f(x)|}{|y-x|^{\alpha}}<\infty.$} continuous.
\label{ass3}
\item It holds $h_n^{3}\gtrsim\Delta,\,$  $h_n^{r+1}\Delta^{1/2}n^{1/2}(\log h_n^{-1})^{-1}\to 0$ and $(n\Delta h_n^{3})^{-1/2}(\log n)^{1/2}\to 0$.
\label{ass5}
\item  The estimator $\widehat{\sigma}^{2}$ satisfies
\label{ass6}
\begin{equation*}
\bigl|\sigma^{2}-\widehat{\sigma}^{2}\bigr|\cdot\bigl\Vert  h_n^{-1}W(\cdot/h_n)\bigr\Vert _{I}=o_{\mathrm{P}}\bigl(\Delta^{-1/2} h_n^{-1}n^{-1/2}\log h_n^{-1}\bigr).
\end{equation*}
\end{enumerate}
\end{assumption}
\paragraph{\textbf{Discussion}}
Condition \ref{ass1} is a moment condition and is equivalent to finiteness of (6+)-th moment of the increments process $X_{t+\Delta}-X_t$ (see Lemma~\ref{Moments} for more details). 
 And finally, Condition \ref{ass6} guarantees that the term $\bigl|\sigma^{2}-\widehat{\sigma}^{2}\bigr|\cdot\bigl\Vert  h_n^{-1}W(\cdot/h_n)\bigr\Vert _{I}$ is of smaller order as compared  to the order  of the leading term in $\widehat{\rho}_n(x)-\rho(x).$
 \par
 Now we
formulate the main theorem of this section, which shows the  convergence of the proposed Gaussian approximation.

\begin{thm} (Gaussian approximation)
\label{thm1}

Under our assumptions, for sufficiently large n, there exists a tight Gaussian random variable $T_{n}^{G}$ in $\ell^{\infty}(I)$ with  zero mean and covariance function of the form \(\frac{W(x,y)}{s(x)s(y)},\) where  
\begin{multline*}
 W(x,y):=\mathsf{I}_\Delta[B_1(x,\cdot)B_1(y,\cdot)]-\mathsf{I}_\Delta[B_1(x,\cdot)]\mathsf{I}_\Delta[B_1(y,\cdot)]
 \\
 +\mathsf{I}_\Delta[B_2(x,\cdot)B_2(y,\cdot)]-\mathsf{I}_\Delta[B_2(x,\cdot)]\mathsf{I}_\Delta[B_2(y,\cdot)]\\
 +\mathrm{i}\bigl\{\mathsf{I}_\Delta[B_1(y,\cdot)B_2(x,\cdot)]-\mathsf{I}_\Delta[B_1(x,\cdot)B_2(y,\cdot)]\bigr\}
 \\
 +\mathrm{i}\bigl\{\mathsf{I}_\Delta[B_1(x,\cdot)]\mathsf{I}_\Delta[B_2(y,\cdot)]-\mathsf{I}_\Delta[B_1(y,\cdot)]\mathsf{I}_\Delta[B_2(x,\cdot)]\bigr\},
\end{multline*}
 the integral operator \(\mathsf{I}_\Delta\) is defined as \(\mathsf{I}_\Delta [f]:=\int f(\upsilon)\,P_{\Delta}(d\upsilon),\)  $s(x)=\sqrt{s^2(x)}$ has the form (\ref{VarRn}) and
  \begin{equation*}
 \begin{split}
 B_{1}(x,\upsilon)&:=K_{0,n}(x-\upsilon)+\upsilon^{2}K_{2,n}(x-\upsilon),\\
 {B}_{2}(x,\upsilon)&:=\upsilon K_{1,n}(x-\upsilon)-\upsilon^{3}K_{3,n}(x-\upsilon).
 \end{split}
 \end{equation*}
Moreover it holds 
  \begin{equation*}
\underset{z\in\mathbb{R}}{\sup}\left|\mathrm{P}\left\{\left \Vert \frac{\sqrt{n}\Delta}{s(\cdot)}(\widehat{\rho}_{n}(\cdot)-\rho(\cdot))\right\Vert_{I}\leq z\right\} -\mathrm{P}\left\{\left \Vert T_{n}^{G}\right\Vert_{I}\leq z\right\} \right|\to 0
\end{equation*}
as \(n\to \infty\) and
\begin{equation}
\label{DifT}
\bigl|\bigl\Vert T_{n}\bigr\Vert _{I}- \bigl\Vert T_{n}^{G}\bigr\Vert_{I}\bigr|=o_{\mathrm{P}} \left(h_n^{1/2}\log h_n^{-1}\right),\quad n\to \infty.
\end{equation}
\end{thm}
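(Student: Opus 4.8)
The plan is to derive all three claims from the empirical-process representation \eqref{Rn}, combining a Gaussian coupling for suprema of empirical processes with a Gaussian anti-concentration argument, following the high-dimensional CLT machinery of \cite{chernozhukov2013gaussian} as adapted to the L\'evy setting in \cite{kato2017bootstrap}.

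First I would reduce $\widehat{\rho}_n-\rho$ to the stochastic term $R_n$. By the decomposition \eqref{ZerlDichte} it suffices to show that the standardised bias $\frac{\sqrt n\Delta}{s(\cdot)}\bigl(I_{\rho_n}+I_{\sigma^2_n}\bigr)$ is $o_{\mathrm P}(h_n^{1/2}\log h_n^{-1})$ uniformly on $I$. The kernel bias $I_{\rho_n}$ is of order $h_n^{r}$ by the vanishing-moment conditions on $W$ together with the $(r-p)$-H\"older smoothness of $\rho^{(p)}$ from condition \ref{ass3}, and the first condition in \ref{ass5}, namely $h_n^{r+1}\Delta^{1/2}n^{1/2}(\log h_n^{-1})^{-1}\to0$, guarantees its negligibility after multiplication by $\sqrt n\Delta/s(\cdot)$; the term $I_{\sigma^2_n}$ is controlled directly by condition \ref{ass6}. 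After this reduction it is enough to approximate $T_n=\frac{\sqrt n\Delta}{s(\cdot)}R_n$.

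Next I would identify the covariance. Writing $g_n(x,\upsilon):=\sum_{m=0}^{3}\mathrm{i}^{m}\upsilon^{m}K_{m,n}(x-\upsilon)$, the representation \eqref{Rn} gives $\sqrt n\Delta\,R_n(x)=n^{-1/2}\sum_{j=1}^{n}\{g_n(x,(\Delta X)_j)-\mathsf{E}\,g_n(x,(\Delta X)_1)\}$, a standardised sum of i.i.d.\ terms. Splitting $g_n$ into the real and imaginary parts $B_1$ and $B_2$ of the statement and computing $\mathsf{Cov}(g_n(x,\cdot),g_n(y,\cdot))$ under $P_\Delta$ yields exactly $W(x,y)$, so that $T_n$ has covariance $W(x,y)/(s(x)s(y))$; this is a direct, if bookkeeping-heavy, computation using $\mathsf{I}_\Delta[f]=\mathsf{E}[f((\Delta X)_1)]$. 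I then define $T_n^{G}$ to be the centred Gaussian field with this covariance; its tightness and sample-path continuity in $\ell^{\infty}(I)$ follow from a metric-entropy bound, since the kernels $K_{m,n}$ are smooth and Lipschitz in $x$ on the compact set $I$, so the intrinsic semimetric is dominated by the Euclidean one with correlation length of order $h_n$.

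The core step, and the main obstacle, is the Gaussian coupling with the sharp rate, realised on a suitably enriched probability space. I would apply the coupling inequality of \cite{chernozhukov2013gaussian} to the empirical process indexed by the real and imaginary parts of the class $\mathcal{F}_n=\{\upsilon\mapsto g_n(x,\upsilon)/s(x):x\in I\}$. This requires: an envelope $F_n$ with controlled moments, where, because the highest term carries the factor $(\Delta X)^3$ and the most ill-posed kernel $K_{3,n}$ (whose symbol $Q_3$ contributes an extra factor $u$ in \eqref{Qm3}), envelope moments up to order six are needed, which is precisely the content of condition \ref{ass1} via Lemma~\ref{Moments}; a VC-type entropy bound for $\mathcal{F}_n$, which holds since $x$ ranges over a one-dimensional compact set and the kernels depend smoothly on $x$; and careful tracking of the $h_n$-, $\Delta$- and $n$-dependence of the remainder. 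The kernels $K_{m,n}$ concentrate on scale $h_n$, as $\varphi_W(\cdot\,h_n)$ is supported in $[-h_n^{-1},h_n^{-1}]$, so the effective dimension is of order $h_n^{-1}$ and all logarithmic factors are of order $\log h_n^{-1}$; the condition $h_n^{3}\gtrsim\Delta$ in \ref{ass5} keeps the factor $1/\Phi_{\Delta X}(u)$ present in every $Q_m$ bounded on the effective frequency band $|u|\le h_n^{-1}$, while $(n\Delta h_n^{3})^{-1/2}(\log n)^{1/2}\to0$ forces the remainder of the coupling below $h_n^{1/2}\log h_n^{-1}$, yielding \eqref{DifT}. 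Finally, the Kolmogorov-distance bound follows from \eqref{DifT} by Gaussian anti-concentration (Nazarov's inequality): since the marginals of $T_n^{G}$ have unit variance and the effective dimension is $h_n^{-1}$, one has $\mathsf{E}\,\Vert T_n^{G}\Vert_I\lesssim(\log h_n^{-1})^{1/2}$, so the anti-concentration rate multiplied by the coupling error is of order $h_n^{1/2}(\log h_n^{-1})^{3/2}\to0$, which closes the argument.
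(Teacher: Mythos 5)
Your proposal is correct and follows essentially the same route as the paper's proof: reduction to $R_n$ via the decomposition \eqref{ZerlDichte} under Assumption~\ref{assumption1}, an empirical-process representation over the classes indexed by $x\in I$, a Chernozhukov--Chetverikov--Kato-type Gaussian coupling (the paper invokes Theorem~2.1 of \cite{chernozhukovand2016} with envelope $b\lesssim\sqrt{h_n}/\sqrt{\Delta}$, matching your moment and envelope bookkeeping), and Gaussian anti-concentration combined with $\mathsf{E}\bigl[\bigl\Vert T_{n}^{G}\bigr\Vert_{I}\bigr]\lesssim(\log n)^{1/2}$ to pass from the coupling \eqref{DifT} to the Kolmogorov-distance statement. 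The only cosmetic differences are that you construct $T_n^G$ directly from the covariance $W(x,y)$ and verify tightness by entropy, whereas the paper obtains it as a by-product of the coupling theorem, and you cite Nazarov's inequality where the paper uses the equivalent anti-concentration result of \cite{chernozhukov2014anti}.
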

Building on Theorem \ref{theorem2}, the following result formally establishes the asymptotic validity of the multiplier bootstrap confidence band $\widehat{\mathcal{C}}^{MB}_{1-\tau}(x)$. 
\begin{thm} (Validity of bootstrap confidence bands).
Under Assumption~\ref{assumption1} we have that 
\begin{equation*}
\mathrm{P}\bigl\{ \rho(x)\in\widehat{\mathcal{C}}^{MB}_{1-\tau}(x),\quad\forall\, x\in I\bigr\} \to1-\tau
\end{equation*}
as $n\to\infty$. Moreover the supremum width of the confidence band of  $\widehat{\mathcal{C}}^{MB}_{1-\tau}(x)$ is  of order $\mathcal{O}_{p}\bigl((n\Delta h_n^3)^{-1/2}\sqrt{\log n}\bigr)$.
\label{theorem2}
\end{thm}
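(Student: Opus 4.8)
The plan is to recast the coverage event as a bound on the supremum of a normalised empirical process and then to combine the Gaussian approximation of Theorem~\ref{thm1} with the multiplier--bootstrap theory of \cite{chernozhukov2013gaussian}. First I would note that $\rho(x)\in\widehat{\mathcal{C}}^{MB}_{1-\tau}(x)$ for every $x\in I$ is equivalent to
\begin{equation*}
\Bigl\Vert \tfrac{\sqrt{n}\Delta(\widehat{\rho}_{n}(\cdot)-\rho(\cdot))}{\widehat{s}_{n}(\cdot)}\Bigr\Vert_{I}\le \widehat{c}_{n}^{MB}(1-\tau).
\end{equation*}
Writing $\widehat{s}_{n}^{-1}=s^{-1}\cdot(s/\widehat{s}_{n})$ and using the uniform consistency $\sup_{x\in I}|\widehat{s}_{n}(x)/s(x)-1|=o_{\mathrm P}(1)$ (which follows from the uniform convergence of the plug-in kernels $\widehat{K}_{m,n}\to K_{m,n}$ on the frequency band $|u|\le h_{n}^{-1}$), the left-hand norm equals $(1+o_{\mathrm P}(1))\Vert \sqrt{n}\Delta(\widehat{\rho}_{n}-\rho)/s\Vert_{I}$. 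Theorem~\ref{thm1} already absorbs the bias terms $I_{\sigma^{2}_{n}}$ and $I_{\rho_{n}}$ of \eqref{ZerlDichte} (Conditions \ref{ass5} and \ref{ass6} make them of smaller order than the fluctuations) and shows that this last norm has, up to an error vanishing uniformly in $z$, the law of $\Vert T_{n}^{G}\Vert_{I}$. Hence the coverage probability equals $\mathrm P\{\Vert T_{n}^{G}\Vert_{I}\le \widehat{c}_{n}^{MB}(1-\tau)\}+o(1)$.

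The core of the argument is to show that the bootstrap quantile is consistent, $\widehat{c}_{n}^{MB}(1-\tau)=c_{n}^{G}(1-\tau)+o_{\mathrm P}(1)$. Conditionally on $\mathsf{D}_{n}$, the process $\widehat{T}_{n}^{MB}$ of \eqref{TnMB} is centred Gaussian, and its conditional covariance is the empirical analogue of $W(x,y)/(s(x)s(y))$ built from the estimated influence functions $\sum_{m}\mathrm{i}^{m}(\Delta X)_{j}^{m}\widehat{K}_{m,n}(x-(\Delta X)_{j})$. I would first prove that this conditional covariance converges uniformly on $I\times I$ to the covariance of $T_{n}^{G}$, by a law of large numbers for the empirical second moments combined with the replacement of $\widehat{K}_{m,n}$ by $K_{m,n}$ through the uniform rate for $\widehat{Q}_{m}\to Q_{m}$. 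Given covariance closeness, the Gaussian comparison inequality and the anti-concentration inequality for suprema of Gaussian processes in \cite{chernozhukov2013gaussian} yield that the conditional law of $\Vert\widehat{T}_{n}^{MB}\Vert_{I}$ approximates that of $\Vert T_{n}^{G}\Vert_{I}$ uniformly, hence the quantile consistency. Anti-concentration then upgrades $\widehat{c}_{n}^{MB}(1-\tau)=c_{n}^{G}(1-\tau)+o_{\mathrm P}(1)$ into $\mathrm P\{\Vert T_{n}^{G}\Vert_{I}\le \widehat{c}_{n}^{MB}(1-\tau)\}\to 1-\tau$, which together with the first paragraph proves the coverage statement.

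For the width I would bound the half-width $\widehat{s}_{n}(x)(\sqrt{n}\Delta)^{-1}\widehat{c}_{n}^{MB}(1-\tau)$ factor by factor. The leading contribution to $s^{2}(x)$ comes from the $m=3$ term, because the factor $\Delta$ in $Q_{0},Q_{1}$ renders them lower order while $Q_{3}(u)\sim u(1-\alpha)/2$ carries an extra power of $u$, so that $K_{3,n}(z)\asymp h_{n}^{-2}$ over a window of width $h_{n}$. Approximating the law of $(\Delta X)_{1}$ near a point $x\neq0$ by its one-jump density $\Delta\,\nu$ and using $\mathsf E[(\Delta X)_{1}^{6}]=O(\Delta)$ (Condition~\ref{ass1}, Lemma~\ref{Moments}) gives $s^{2}(x)\asymp\Delta\,h_{n}^{-3}$ uniformly on the compact $I\subset\mathbb{R}\setminus\{0\}$, so that $s(x)(\sqrt{n}\Delta)^{-1}\asymp(n\Delta h_{n}^{3})^{-1/2}$ and $\widehat{s}_{n}$ shares this order by consistency. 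Since $\Vert T_{n}^{G}\Vert_{I}$ is the supremum of a Gaussian process over a compact interval whose effective resolution is $h_{n}$, standard maximal bounds give $c_{n}^{G}(1-\tau)=O(\sqrt{\log h_{n}^{-1}})=O(\sqrt{\log n})$, and the bootstrap quantile inherits $\widehat{c}_{n}^{MB}(1-\tau)=\mathcal{O}_{\mathrm P}(\sqrt{\log n})$. Multiplying the two orders gives the announced supremum width $\mathcal{O}_{\mathrm P}\bigl((n\Delta h_{n}^{3})^{-1/2}\sqrt{\log n}\bigr)$.

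The main obstacle is the conditional covariance convergence in the bootstrap step: the influence functions are themselves plug-in quantities involving up to third derivatives of the empirical characteristic function divided by $\widehat{\Phi}_{\Delta X}$, so controlling the propagation of the estimation error uniformly over the growing frequency range $|u|\le h_{n}^{-1}$ and transferring it through the non-standard normalisation $s^{2}(x)\asymp\Delta h_{n}^{-3}$ is delicate; this is exactly where the moment Condition~\ref{ass1} and the bandwidth/step-size balance in Condition~\ref{ass5} of Assumption~\ref{assumption1} enter to secure the required uniform stochastic bounds.
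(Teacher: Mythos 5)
Your proposal has the same three-step architecture as the paper's proof: recast coverage as $\bigl\Vert \sqrt{n}\Delta(\widehat{\rho}_{n}-\rho)/\widehat{s}_{n}\bigr\Vert_{I}\leq \widehat{c}_{n}^{MB}(1-\tau)$, transfer the law of this supremum to that of $\Vert T_{n}^{G}\Vert_{I}$ via Theorem~\ref{thm1}, prove consistency of the bootstrap quantile, and bound the width by $\sup_{x\in I}s(x)(\sqrt{n}\Delta)^{-1}\cdot\mathcal{O}_{\mathrm P}(\sqrt{\log n})$. Your width argument (dominance of the $m=3$ term, $s^{2}\asymp\Delta h_{n}^{-3}$ via the jump-density lower bound, Gaussian maximal/quantile bounds of order $\sqrt{\log n}$) is exactly the paper's, which uses Lemma~\ref{lemvar}, Lemma~\ref{InfPDelta}, the Dudley entropy bound and the Borell--Sudakov--Tsirelson inequality. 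Where you genuinely diverge is the bootstrap step: you exploit that, conditionally on $\mathsf{D}_{n}$, the Gaussian-multiplier process $\widehat{T}_{n}^{MB}$ is itself a centred Gaussian process, and propose conditional covariance convergence plus Gaussian comparison plus anti-concentration. The paper instead invokes the abstract multiplier coupling result (Theorem 2.2 of \cite{chernozhukovand2016}) to produce a conditional copy $V_{n}^{\xi}$ of $\Vert T_{n}^{G}\Vert_{I}$ with the explicit rate $(\log n)^{7/4+1/q}(n\Delta h_{n}^{-1})^{-1/4}$, then sandwiches $\widehat{c}_{n}^{MB}(1-\tau)$ between $c_{n}^{G}(1-\tau\mp\varepsilon_{n}')$ as in \cite{kato2018uniform}. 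Your route is legitimate precisely because the multipliers are Gaussian (it would not extend to other multiplier weights), but it is not cheaper: to apply the finite-dimensional comparison bounds of \cite{chernozhukov2015comparison} you must discretize $I$ and control the discretization by anti-concentration, and the covariance convergence must come with a rate strong enough to beat the logarithmic factors. Establishing that rate requires the same plug-in bounds $\Vert\widehat{K}_{m,n}-K_{m,n}\Vert_{\mathbb{R}}$ and empirical-process bounds that the paper assembles in Lemma~\ref{sdachs}, so in practice you would reconstruct most of that machinery anyway; what the coupling-theorem route buys is that these rates come packaged with the comparison of suprema in one stroke.

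There is one concrete quantitative gap as written: you replace $\widehat{s}_{n}$ by $s$ using only $\sup_{x\in I}|\widehat{s}_{n}(x)/s(x)-1|=o_{\mathrm P}(1)$, concluding that the normalised statistic is $(1+o_{\mathrm P}(1))\Vert\sqrt{n}\Delta(\widehat{\rho}_{n}-\rho)/s\Vert_{I}$. This is not sufficient for a uniform-in-$z$ distributional statement. Since $\Vert T_{n}\Vert_{I}=\mathcal{O}_{\mathrm P}(\sqrt{\log n})$, a multiplicative perturbation $1+\epsilon_{n}$ shifts the supremum by $\epsilon_{n}\,\mathcal{O}_{\mathrm P}(\sqrt{\log n})$, and the anti-concentration bound \eqref{anticon} converts a shift $\varepsilon$ into a change of probability of order $\varepsilon\bigl(1+\mathsf{E}\bigl[\Vert T_{n}^{G}\Vert_{I}\bigr]\bigr)\asymp\varepsilon\sqrt{\log n}$; hence one needs (roughly) $\epsilon_{n}\log n\to 0$, i.e.\ a rate of order $o_{\mathrm P}(1/\log n)$ rather than bare consistency. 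This is precisely why the paper proves the rate $\Vert\widehat{s}_{n}^{2}(\cdot)/s^{2}(\cdot)-1\Vert_{I}=o_{\mathrm P}\bigl\{\bigl((n\Delta h_{n}^{-1})^{-1}\log n\bigr)^{1/2}\bigr\}$ in Lemma~\ref{sdachs} before running the quantile argument, and the same issue recurs in your bootstrap step, where the conditional covariance must converge at a comparable rate. The gap is fillable with the bounds you already plan to derive, but the $o_{\mathrm P}(1)$ step, taken literally, would fail.
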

\subsection*{Discussion on choosing for $\Delta$, $n$ and $h_n$}

From the lemma \ref{lemvar} applies $\inf_{x\in I}s^{2}(x)\gtrsim\Delta h_{n}^{-3}$, which leads to the first assumption \ref{assumption1} \ref{ass5}, namely $h_{n}^{3}\gtrsim\Delta$. According to the representation \ref{ZerlDichte} applies
\begin{equation*}
\widehat{\rho}_{n}(x)-\rho(x)=\underbrace{\bigl(\widehat{\rho}_{n}(x)-\widetilde{\rho}(x)\bigr)}_{R_n(x)}+\underbrace{\bigl(\widetilde{\rho}(x)-\rho(x)\bigr)}_{I_{\sigma^2_n}(x)+I_{\rho_n}(x)}.
\end{equation*} 
Under the condition of the dominance of the convergence rate of the first term $\Vert R_{n}(x)\Vert_{\mathbb{R}}=\mathcal{O}\left(\Delta^{-1/2}h_{n}^{-1}n^{-1/2}\log h_{n}^{-1}\right)$ follows assumption \ref{assumption1} \ref{ass6}, namely  $\bigl|\sigma^{2}-\widehat{\sigma}^{2}\bigr|\cdot\bigl\Vert h_{n}^{-1}W(\cdot/h)\bigr\Vert_{I}=o_{\mathrm{P}}\bigl(\Delta^{-1/2}h_{n}^{-1}n^{-1/2}\log h_{n}^{-1}\bigr)$ and assumption \ref{assumption1} \ref{ass5}, namely $h_{n}^{r+1}\Delta^{1/2}n^{1/2}(\log h_{n}^{-1})^{-1}\to0$. If the two terms are supposed to be significant, the last condition is represented in the form $h_{n}^{r+1}\Delta^{1/2}n^{1/2}(\log h_{n}^{-1})^{-1}\gtrsim 1$, which leads to a relationship between $n$ and $h_n$. Let $\log h_{n}^{-1}\lesssim\log n\lesssim n^{\varepsilon}$, where $\varepsilon\to 0$ then applies
\begin{equation*}
\begin{split} 
& h_{n}^{r}\lesssim\Delta^{-1/2}h_{n}^{-1}n^{-1/2}\log h_{n}^{-1}\\
 & h_{n}^{r}\lesssim h_{n}^{-5/2}n^{-1/2}\log h_{n}^{-1}\\
 & h_{n}^{r+5/2}\lesssim n^{-1/2+\varepsilon}\\
 &h_{n}\lesssim n^{-\frac{1-2\varepsilon}{2r+5}}\\
 & n\lesssim h_{n}^{-\frac{2r+5}{1-2\varepsilon}}.
\end{split}
\end{equation*} 
From the proof of the theorem \ref{thm1} it follows that
\begin{equation*}
\frac{\sqrt{n}\Delta(\widehat{\rho}_{n}(x)-\rho(x))}{s(x)}=T_{n}(x)+o_{\mathrm{P}}\left(h_{n}^{1/2}\log h_{n}^{-1}\right)
\end{equation*}
and 
\begin{equation*}
\bigl|\bigl\Vert\mathsf{G}_{n}\bigr\Vert_{\mathcal{F}_{n}}-V_{n}\bigr|=\mathcal{O}_{P}\biggl\{\frac{(\log n)^{1+1/q}}{n^{1/2-1/q}\sqrt{\Delta h_{n}^{-1}}}+\frac{\log n}{(n\Delta h_{n}^{-1})^{1/6}}\biggr\}=\mathcal{O}_{P}\biggl\{\frac{\log n}{(n\Delta h_{n}^{-1})^{1/6}}\biggr\}
\end{equation*}
further follows
\begin{equation*}
\begin{split} & h_{n}^{1/2}\log h_{n}^{-1}\gg\frac{\log n}{(n\Delta h_{n}^{-1})^{1/6}}\\
 & h_{n}^{1/2}\log h_{n}^{-1}(n\Delta h_{n}^{-1})^{1/6}\left(\log n\right)^{-1}\to\infty.
\end{split}
\end{equation*}
We also find the relationship between $n$ and $h_{n}$ so that the error of the Gaussian approximation $\bigl|\bigl\Vert\mathsf{G}_{n}\bigr\Vert_{\mathcal{F}_{n}}-V_{n}\bigr|$ is comparable to the approximation error $\Vert T_{n}(x)\Vert_{\mathbb{R}}$. Let $\log h_{n}^{-1}\lesssim\log n\lesssim n^{\varepsilon}$, where $\varepsilon\to 0$, then applies
\begin{equation*}
\begin{split} 
& h_{n}^{1/2}\log h_{n}^{-1}\gtrsim\frac{\log n}{(n\Delta h_{n}^{-1})^{1/6}}\\
 & h_{n}^{1/2}\log h_{n}^{-1}\gtrsim\frac{\log n}{(nh_{n}^{2})^{1/6}}\\
 & h_{n}^{5/6}\log h_{n}^{-1}\gtrsim n^{-1/6}\log n\\
 &h_{n}\gtrsim n^{-1/5}.
\end{split}
\end{equation*}
Furthermore, it should be noted that the bootstrap approximation $\Vert T_{n}^{MB}(x)\Vert_{\mathbb{R}}$ of a Gaussian process $\Vert T_{n}^{G}(x)\Vert_{\mathbb{R}}$ according to the theorem \ref{theorem2} has the order
\begin{equation*}
\bigl|\bigl\Vert\mathsf{G}_{n}^{\xi}\bigr\Vert_{\mathcal{F}_{n}}-V_{n}^{\xi}\bigr|=\mathcal{O}_{P}\biggl\{\frac{(\log n)^{2+1/q}}{n^{1/2-1/q}\sqrt{\Delta h_{n}^{-1}}}+\frac{(\log n)^{7/4+1/q}}{(n\Delta h_{n}^{-1})^{1/4}}\biggr\}=\mathcal{O}_{P}\biggl\{\frac{(\log n)^{7/4+1/q}}{(n\Delta h_{n}^{-1})^{1/4}}\biggr\}.
\end{equation*} 
Therefore the rate of convergence of this approximation is faster than the one mentioned above in the theorem \ref{thm1} order, namely applies
\begin{equation*}
 h_{n}^{1/2}\log h_{n}^{-1}\gg\frac{\log n}{(n\Delta h_{n}^{-1})^{1/6}}\gg\frac{(\log n)^{7/4+1/q}}{(n\Delta h_{n}^{-1})^{1/4}}.
\end{equation*}
It is also important to note that according to the theorem \ref{theorem2}, the the supremum width of the confidence band should also converge to 0:
\begin{equation*}
(n\Delta h_{n}^{3})^{-1/2}\sqrt{\log n}\longrightarrow0.
\end{equation*}
Since $h_{n}^{-3/2}\left(\log n\right)^{3}\left(\log h_{n}^{-1}\right)^{-3}\ll h^{-2}\sqrt{\log n}$
then applies
\begin{equation*}
\begin{split} 
&  h_{n}^{-1/2}(n\Delta h_{n}^{-1})^{-1/6}\log n\left(\log h_{n}^{-1}\right)^{-1}\to0,\\
 & h_{n}^{1/2}\log h_{n}^{-1}(n\Delta h_{n}^{-1})^{1/6}\left(\log n\right)^{-1}\to\infty
\end{split}
\end{equation*} 
if assumption $(n\Delta h_{n}^{3})^{-1/2}\sqrt{\log n}\to0$ satisfies. Since the expression $(n\Delta h_{n}^{3})^{-1/2}\sqrt{\log n}\to0$  has a slower order of convergence than the expression $h_{n}^{-1/2}(n\Delta h_{n}^{-1})^{-1/6}\log n\left(\log h_{n}^{-1}\right)^{-1}\to0$, then the expression  $h_{n}\gtrsim n^{-1/5}$ should be specified:
\begin{equation*}
 \begin{split} & (n\Delta h_{n}^{3})^{-1/2}\sqrt{\log n}\to0\\
 & (nh_{n}^{6})^{-1/2}\sqrt{\log n}\to 0\\
 & h_{n}^{-3}\lesssim n^{1/2\left(1-\varepsilon\right)}\\
 & h_{n}\gtrsim n^{-1/6\left(1-\varepsilon\right)}\\
 & n\gtrsim h_{n}^{-6/\left(1-\varepsilon\right)},
\end{split}
 \end{equation*}
 where $\log n\ll n^{\varepsilon}, \varepsilon\to 0.$ 
 The  supremum width of the confidence band is minimal if $n\approx h_{n}^{-\frac{2r+5}{1-2\varepsilon}}$, da $(n\Delta h_{n}^{3})^{-1/2}\sqrt{\log n}\to\min\Longleftrightarrow h_{n}^{-6}\log n/n\to\min$ applies.
 Then the following applies to the relationship between $n$ and $h_{n}$:
 \begin{equation*}
\begin{split}
&h_{n}^{-6/\left(1-\varepsilon\right)}\leq n\leq h_{n}^{-\frac{2r+5}{1-2\varepsilon}}\\
&n^{-1/6\left(1-\varepsilon\right)}\leq h_{n}\leq n^{-\frac{1-2\varepsilon}{2r+5}}.
\end{split}
\end{equation*} 
\section{Numerical results}
\label{sec:num}
Consider the integral (\ref{Zt}) with the kernel $\mathcal{K}_\alpha$ from the class (\ref{Kalpha}) for some \(\alpha\in (0,1)\), and the L\'evy process \((L_t)\) defined by 
\begin{eqnarray}
\begin{split} & L_{t}=\gamma t+\sigma W_{t}+CPP_{t}^{(1)}\cdot\mathbb{I}\left\{ t\geq0\right\} +CPP_{t}^{(2)}\cdot\mathbb{I}\left\{ t<0\right\} ,\\
 & CPP_{t}^{\left(k\right)}:=\sum_{j=1}^{N_{t}^{\left(k\right)}}Y_{j}^{\left(k\right)},\qquad k=1,2,
\end{split}
\end{eqnarray}
where $\gamma\in\mathbb{R}$ is a drift, $\sigma\geq0$, $W_{t}$ is a Brownian motion, $N_{t}^{(1)}$, $N_{t}^{(2)}$, are two Poisson processes with intensity $\lambda$, $Y_{1}^{(1)}$, $Y_{2}^{(1)}$,... and $Y_{1}^{(2)}$, $Y_{2}^{(2)}$,... are i.i.d. r.v's with an absolutely continuous distribution, and all $Y'$s, $N_{t}^{(1)}$, $N_{t}^{(2)}$, $W_{t}$ are jointly independent.
For simulation study, we take $\gamma=5, \lambda=1$ and $\sigma=0,$ and aim to estimate the corresponding L\'evy density of \((L_t)\) under different choices of the parameter $\alpha,$ namely $\alpha=0.5,$ 0.8 and 0.9.

\textbf{Simulation.} 
Recall that the L\'evy-driven moving average process  $Z_{t}$ satisfying \ref{Zt} is observed at $n$ discrete instants $t_j = j\Delta,\, j = 1,...,n,$ with regular sampling interval and our estimation procedure is based on the random variables $\,(\Delta Z)_j:=Z_{j\Delta}-Z_{(j-1)\Delta},\, j = 1,\ldots,n,$  which are independent, identically distributed, with common characteristic function $\Phi$. We assume that, as $n$ tends to infinity, $\Delta = \Delta_n$ tends to 0 and $n\Delta$ tends to infinity.

For $k=1,2,$ denote the jump times of $L_{t}^{\left(k\right)}$ by $s_{1}^{\left(k\right)}$, $s_{2}^{\left(k\right)}$,...., corresponding to the jump sizes $Y_{1}^{\left(k\right)}$, $Y_{2}^{\left(k\right)}$,.... $Y_{1}^{\left(k\right)}$ and $Y_{2}^{\left(k\right)}$ are independent r.v's with standard exponential distribution with parameter $\lambda$. Note that
\begin{eqnarray}
 \quad\quad Z_{t}=
\begin{cases}
\frac{2\gamma}{1+\alpha}+\underset{j\in J^{\left(1\right)}}{\sum}\left(1-\alpha|t-s_{1}^{\left(j\right)}|\right)^{1/\alpha}Y_{1}^{\left(j\right)}, & \text{if}\;\;t\geq\frac{1}{\alpha}\\
\frac{2\gamma}{1+\alpha}+\underset{j\in J^{\left(2\right)}}{\sum}\left(1-\alpha|t-s_{1}^{\left(j\right)}|\right)^{1/\alpha}Y_{1}^{\left(j\right)}\\
\hspace{2.5cm}+\underset{j\in J^{(3)}}{\sum}\left(1-\alpha|t+s_{2}^{\left(j\right)}|\right)^{1/\alpha}Y_{2}^{\left(j\right)}, & \text{if}\;\;t<\frac{1}{\alpha},
\end{cases}
\end{eqnarray}
where 
\begin{eqnarray*}
\begin{split}
J^{\left(1\right)} & :=\left\{ j:t-\frac{1}{\alpha}\leq s_{1}^{\left(j\right)}\leq t+\frac{1}{\alpha}\right\} ,\\
J^{\left(2\right)} & :=\left\{ j:0\leq s_{1}^{\left(j\right)}\leq t+\frac{1}{\alpha}\right\} ,\\
J^{\left(3\right)} & :=\left\{ j:0\leq s_{2}^{\left(j\right)}\leq\frac{1}{\alpha}-t\right\}.
\end{split}
\end{eqnarray*}
 Finally, the limiting L\'evy process  is defined by $X_j:=(Z_{j\Delta}-Z_{(j-1)\Delta})/\Delta,\, j = 1,...,n.$

Typical trajectory of the of the limiting L\'evy process $X_t:=(\Delta Z)_{t}/\Delta$ is presented in Figure \ref{fig:Zd1}.
\begin{figure}[h]
\center{\includegraphics[scale=0.35]{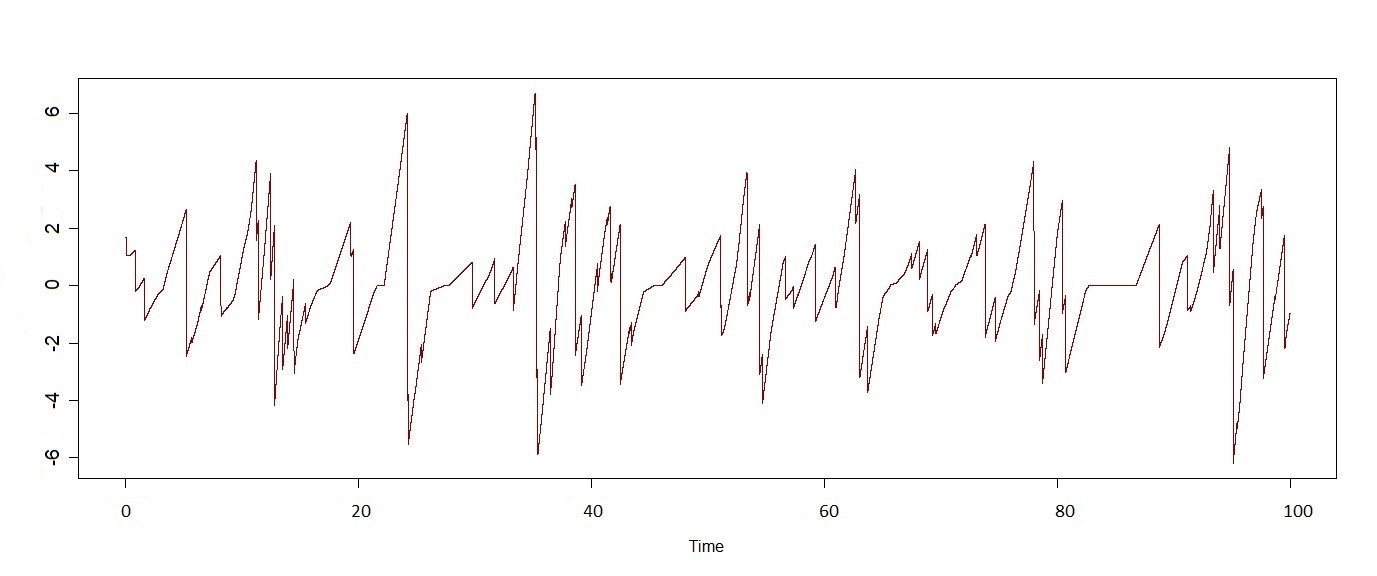}}
\caption{ Typical trajectory of the limiting L\'evy process  $X_t:=(\Delta Z)_{t}/\Delta$ with the value of the parameter $\alpha = 0.5$}
\label{fig:Zd1}
\end{figure}

\textbf{Estimation.}
 Following the ideas from Section 2, we estimate the transformed L\'evy measure by Equation (\ref{nuest}) under different choices of $\alpha$. To show  the convergence properties of the considered estimates, we provide simulations with different values of n. Figure~\ref{fig:CharExpX} shows an estimate of the real part of the characteristic exponent $\psi$ of the L\'evy process $ (L_t) $ through discrete observations of the limit L\'evy process $(X_t).$
 \begin{figure}[h]
\center{\includegraphics[scale=0.4]{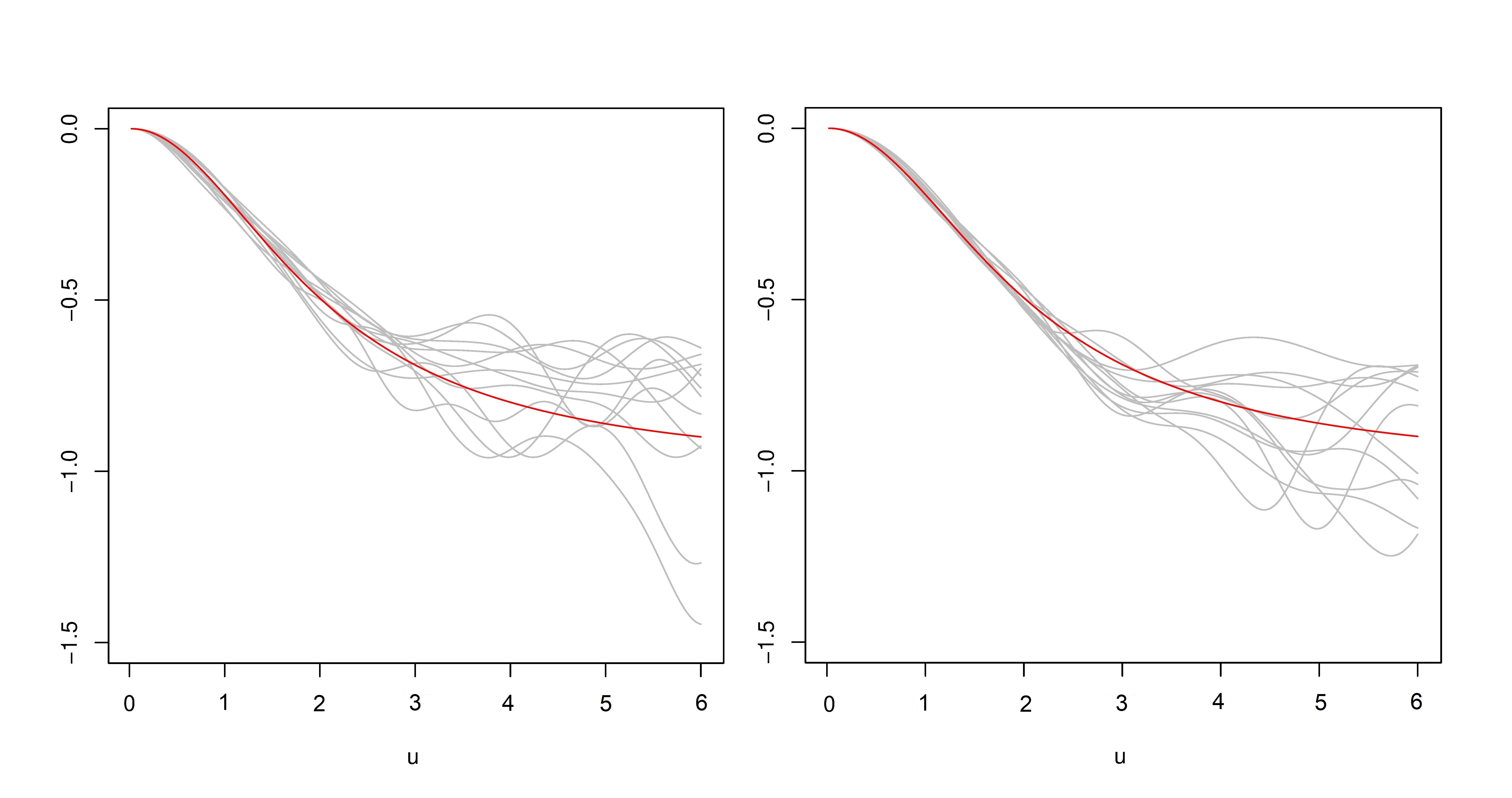}}
\caption{Real part of the characteristic exponent $\psi(u)$ (red) and the real part of 10 realizations of the empirical characteristic exponent $\psi_n(u)$ (gray) with the value of the parameter $\alpha = 0.8$ (left) and $\alpha = 0.9$ (right)}
\label{fig:CharExpX}
\end{figure}
It is important to note that a good estimate of the characteristic exponent $\psi(u)$ is obtained when $ u\in (0,2)$.
Figure \ref{fig:RhonX} shows the estimator of the transformed L\'evy density $\rho$ through discrete observations of the Limit-Lévy process $(X_t)$.
\begin{figure}[h]
\center{\includegraphics[scale=0.3]{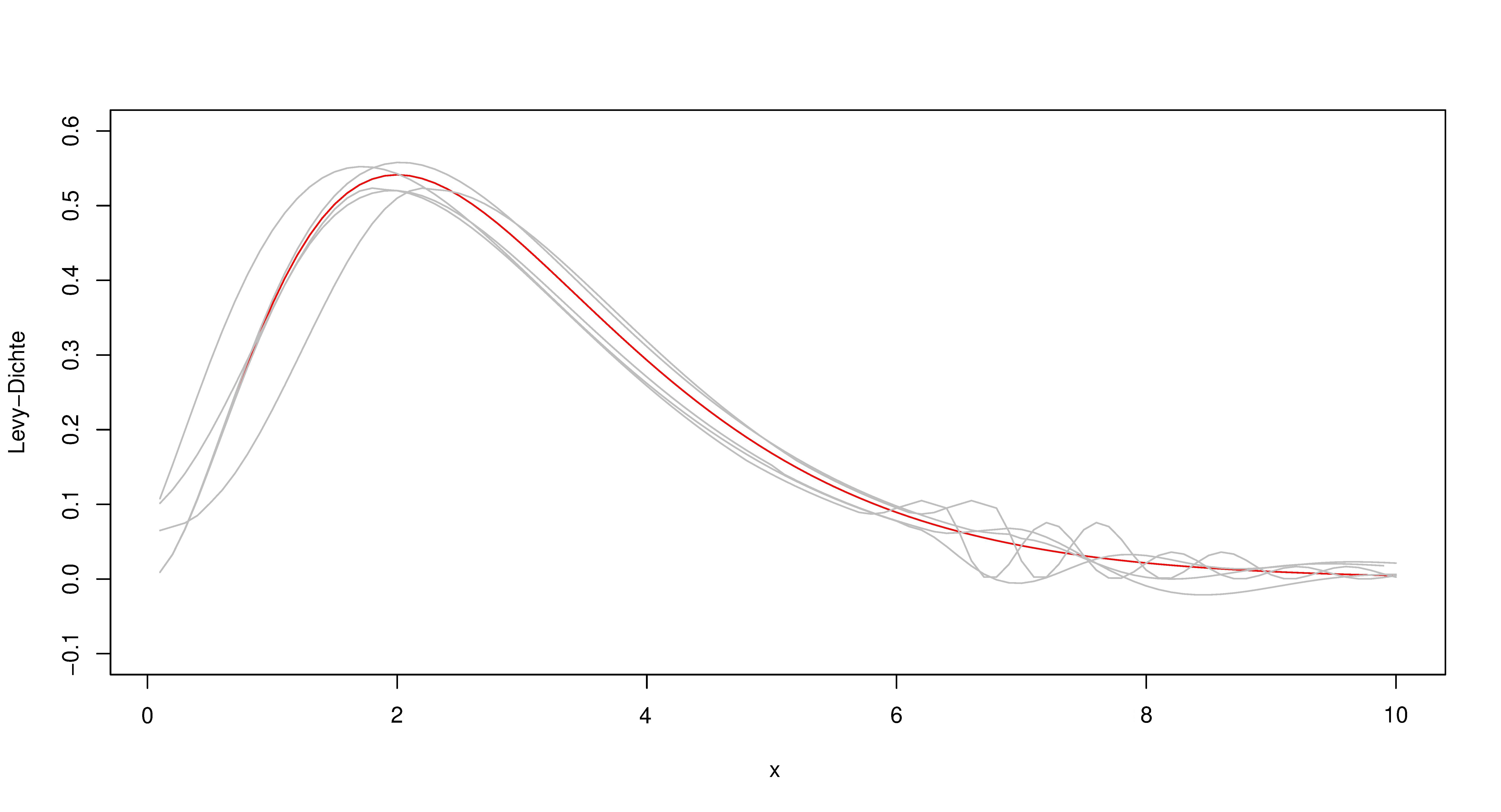}}
\caption{Transformed L\'evy-density $\rho$ (red) and 5 realizations of the estimator of the transformed L\'evy-density $\rho_n$ (gray) with the value of the parameter $\alpha=0.5$}
\label{fig:RhonX}
\end{figure}
  The estimation of the L\'evy densities based on 25 simulation runs are presented in Figures \ref{fig:Nuest}. 

\begin{figure}[h]
\center{\includegraphics[scale=0.25]{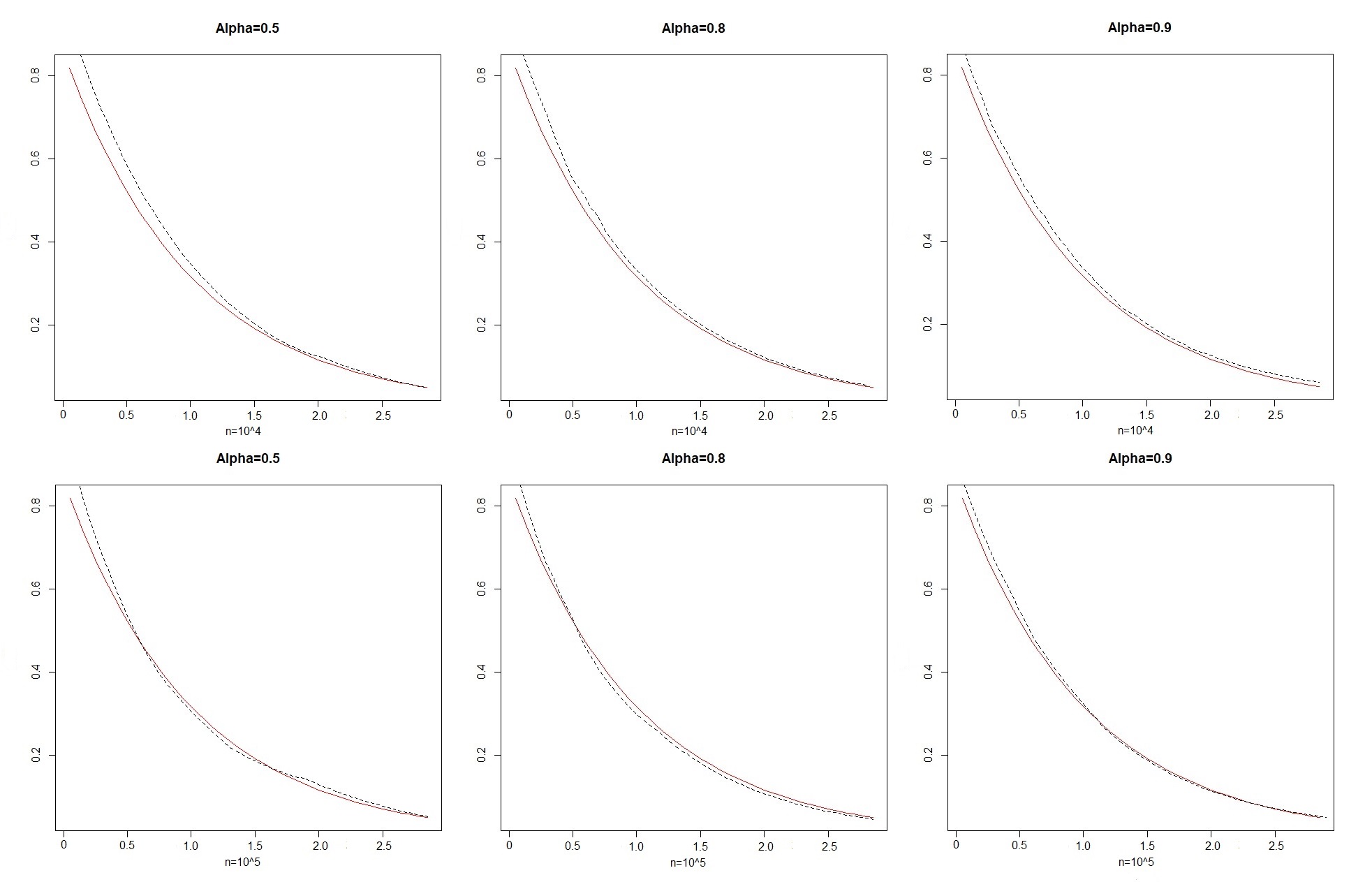}}
\caption{Estimates of the L\'evy densities (dashed lines) for  different values of $n$ and $\alpha$}
\label{fig:Nuest}
\end{figure}
On the one hand, a priori choice for the parameter $h_n$ can be found using the interval for $u$ where the characteristic function of the process $\Delta X$ can be approximated by empirical characteristic function.  On the other hand, a priori choice of the parameter $h_n$ has to consider the assumption \ref{assumption1} \ref{ass5}. Note that the parameter $h_n$ is chosen by numerical optimization. Namely, for each choice of $\alpha$, we first estimate the L\'evy densities for each $h_n$ from an equidistant grid (from \(0.05\) to \(0.5\) with step \(0.05\)), and then analyze the quality of estimation in terms of the minimal mean square error.  Because the best results are obtained for $h_n$ from 0.1 to 0.2, we reproduce the estimation procedure for $h_n$ from another grid (from \(0.08\) to \(0.25\) with step \(0.01\)). After several iterations, we stop the procedure.
It is important to note that in the real-life examples, the aforementioned strategy for choosing $h_n$ should be changed, because the comparison with respect to the mean square error is not possible. One should rather use adaptive methods.
The simulation results illustrated in the figure \ref{fig:Nuest} show that the convergence rates significantly depend on the parameter $\alpha$. More precisely, it turns out that the quality of estimation increases with growing $\alpha$, and the best rates correspond to the case when $\alpha$ is close to 1. This can be explained by the fact that observations become less dependent as $\alpha$ increases.  Let us remark that in  Figure~\ref{fig:Nuest}  we show  the real parts of the estimate $\widehat{\nu}_{n}(x)$. The imaginary part of the considered estimate is quite small (of order $10^{-8}$) and is shown in the Figure~\ref{fig:ImNuest}.
\begin{figure}[h]
\center{\includegraphics[scale=0.3]{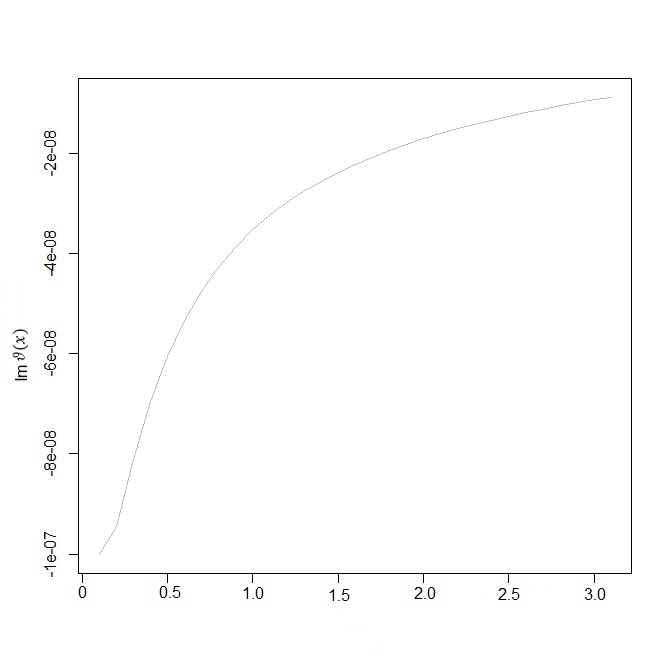}}
\caption{Imaginary part of the estimate of the L\'evy densities  for   $\alpha=0.5$}
\label{fig:ImNuest}
\end{figure}
Finally, following the ideas from Section 2, we construct the confidence interval for the transformed  L\'evy density $\rho$ via the Gaussian multiplier bootstrap method with parameters $\alpha=0.8$, $ n = 10^{5}$ and the confidence level $ 0.9 $. The dashed line in Figure~\ref{fig:confB} represents the  estimator $\widehat{\rho}_n $ of the transformed L\'evy density \(\rho\) (red line).
\begin{figure}[h]
\center{\includegraphics[scale=0.36]{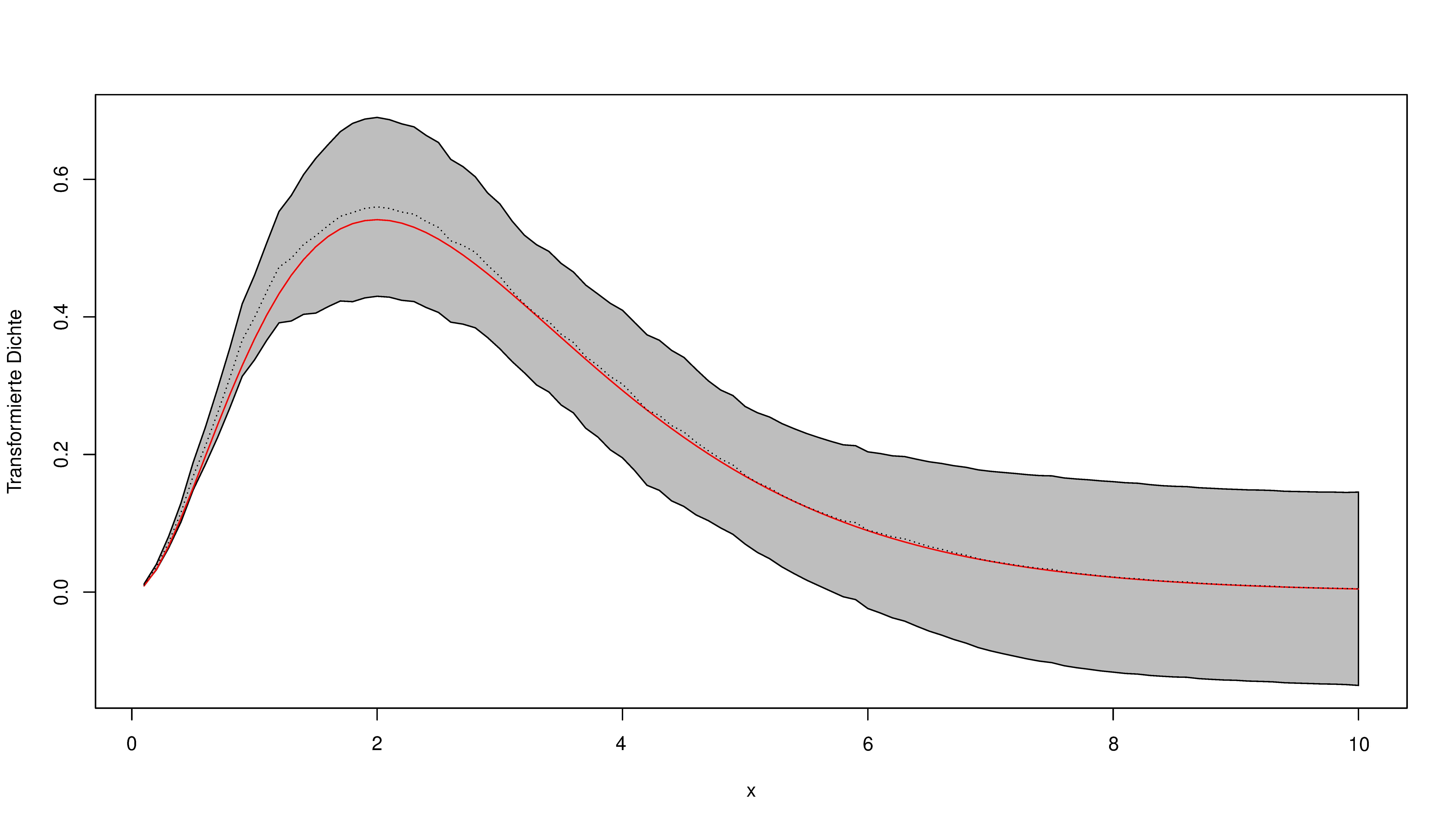}}
\caption{Confidence interval for the transformed L\'evy density $\rho$ via Gaussian multiplier bootstrap method with the parameter $\alpha = 0.8 $, $n=10^{5}$ and the confidence level $0.9$. (The dashed line is the estimator $\widehat{\rho}_n $ of the L\'evy density, the red line is the transformed L\'evy density $\rho$)}
\label{fig:confB}
\end{figure}

\section{Proofs} 
\label{sec:proofs}
For a symmetric kernel $\mathcal{K}_{\alpha}$ of the form (\ref{Kalpha}) we first show  \ref{eq:Phi}.
\begin{lem}
\label{LemmaPhi}
We have 
\begin{equation*}
\Psi_\Delta(u)=\int_{-\infty}^{\infty}\psi\left(u\left(\mathcal{K_{\alpha}}(x+\Delta)-\mathcal{K_{\alpha}}(x)\right)\right)\,dx
=\mathcal{L}_\alpha\psi (\Delta u)+S_\Delta(u),
\end{equation*}
where the operator $\mathcal{L}_\alpha$ is defined as
\begin{eqnarray*}
\mathcal{L}_\alpha f(x):=\frac{2}{1-\alpha}\,x^{-\frac{\alpha}{1-\alpha}}\int_{0}^{x}f\left(z\right)z^{\frac{2\alpha-1}{1-\alpha}}\,dz
\end{eqnarray*}
for any locally bounded function $f$ and 
\begin{eqnarray*}
\psi(u)=\mathrm{i}\gamma u-\frac{1}{2}\sigma^{2}u^{2}+\int_{\mathbb{R}}\left(e^{\mathrm{i}ux}-1-\mathrm{i}ux\mathrm{1}_{\{\left|x\right|\leq1\}}\right)\nu(dx).
\end{eqnarray*}
Furthermore, $S_{\Delta}(u)$ has the form
\begin{equation}
\begin{split}
S_{\Delta}(u)&=S_{1}(u)+S_{2}(u)\\
&=\int_{-1/\alpha}^{1/\alpha}\left[\psi\bigl(u(\mathcal{K_{\alpha}}(x+\Delta)-\mathcal{K_{\alpha}}(x))\bigr)-\psi(u\Delta\mathcal{K_{\alpha}}'(x))\right]\,dx\\
&+\int_{-1/\alpha-\Delta}^{-1/\alpha}\psi(u\mathcal{K_{\alpha}}(x+\Delta))\,dx.
\label{SDelta}
\end{split}
\end{equation}
\end{lem}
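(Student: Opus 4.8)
The plan is to reduce the claimed identity to one change of variables in the ``main'' integral and to identify the two remainder pieces $S_1,S_2$ with a boundary term and a linearisation error.

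\textbf{Integral representation and splitting.} First I would observe that $Z_{t+\Delta}-Z_t=\int_{\mathbb{R}}\bigl(\mathcal{K}_\alpha(t+\Delta-s)-\mathcal{K}_\alpha(t-s)\bigr)\,dL_s$ is a stochastic integral of a deterministic $\mathcal{L}^1\cap\mathcal{L}^2$ function against $L$, so by the Rajput--Rosinski representation \cite{rajput1989spectral} (the same formula already used for $\mathsf{E}[e^{\mathrm{i}uZ_t}]$) its log-characteristic function is $\Psi_\Delta(u)=\int_{\mathbb{R}}\psi\bigl(u(\mathcal{K}_\alpha(t+\Delta-s)-\mathcal{K}_\alpha(t-s))\bigr)\,ds$. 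The substitution $x=t-s$ together with $\psi(0)=0$ gives the first displayed equality, with integrand supported on $[-1/\alpha-\Delta,\,1/\alpha]$. Since $\mathcal{K}_\alpha$ vanishes outside $[-1/\alpha,1/\alpha]$, on the strip $x\in[-1/\alpha-\Delta,-1/\alpha]$ we have $\mathcal{K}_\alpha(x)=0$, and this part of the integral is exactly $S_2(u)=\int_{-1/\alpha-\Delta}^{-1/\alpha}\psi(u\mathcal{K}_\alpha(x+\Delta))\,dx$.

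\textbf{Linearisation.} On the remaining interval $[-1/\alpha,1/\alpha]$ I would linearise the increment by writing $\psi(u(\mathcal{K}_\alpha(x+\Delta)-\mathcal{K}_\alpha(x)))=\{\psi(\cdots)-\psi(u\Delta\mathcal{K}_\alpha'(x))\}+\psi(u\Delta\mathcal{K}_\alpha'(x))$. The first bracket, integrated over $[-1/\alpha,1/\alpha]$, is precisely $S_1(u)$, so the assertion reduces to proving $\int_{-1/\alpha}^{1/\alpha}\psi(u\Delta\mathcal{K}_\alpha'(x))\,dx=\mathcal{L}_\alpha\psi(\Delta u)$.

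\textbf{The main computation.} This is the crux. On the two monotone branches of the even kernel one has $\mathcal{K}_\alpha'(x)=(1+\alpha x)^{(1-\alpha)/\alpha}$ for $x\in(-1/\alpha,0)$ and $\mathcal{K}_\alpha'(x)=-(1-\alpha x)^{(1-\alpha)/\alpha}$ for $x\in(0,1/\alpha)$. On each branch I would substitute $w=|\mathcal{K}_\alpha'(x)|\in(0,1)$; inverting $w=(1\mp\alpha|x|)^{(1-\alpha)/\alpha}$ gives $dx=\tfrac{1}{1-\alpha}w^{(2\alpha-1)/(1-\alpha)}\,dw$. A second substitution $z=\Delta u\,w$ turns each branch into $\tfrac{1}{1-\alpha}(\Delta u)^{-\alpha/(1-\alpha)}\int_0^{\Delta u}\psi(\pm z)\,z^{(2\alpha-1)/(1-\alpha)}\,dz$, and adding the $x<0$ and $x>0$ contributions reproduces $\mathcal{L}_\alpha\psi(\Delta u)$; the bookkeeping $-(2\alpha-1)/(1-\alpha)-1=-\alpha/(1-\alpha)$ is exactly what makes the prefactor $\tfrac{2}{1-\alpha}(\Delta u)^{-\alpha/(1-\alpha)}$ come out. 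Assembling the three steps yields $\Psi_\Delta(u)=\mathcal{L}_\alpha\psi(\Delta u)+S_1(u)+S_2(u)$, which is the claim.

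\textbf{Main obstacle.} The delicate points are (i) justifying the change of variables and the convergence of all three integrals near $|x|=1/\alpha$, where $\mathcal{K}_\alpha'(x)\to0$ so the integrand vanishes (using $\psi(0)=0$ and local boundedness of $\psi$), and near $x=0$, where $\mathcal{K}_\alpha$ has a corner but $\mathcal{K}_\alpha'$ stays bounded; and (ii) tracking the sign of the argument of $\psi$ on the two branches, so that the even symmetry of $\mathcal{K}_\alpha$ correctly collapses the two half-line integrals into the single operator $\mathcal{L}_\alpha$. The finiteness and the small-$\Delta$ order of the remainder $S_\Delta=S_1+S_2$ are not needed here and are deferred to Lemma~\ref{SDeltap}.
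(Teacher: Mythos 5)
Your proposal is correct and takes essentially the same route as the paper's own proof: the same Rajput--Rosinski/Sato representation of $\Psi_\Delta$, the same splitting into the boundary strip (giving $S_2$) and the linearisation error (giving $S_1$), and the same change of variables $w=|\mathcal{K}_\alpha'(x)|$ followed by the rescaling $z=\Delta u\,w$ that turns the main term into $\mathcal{L}_\alpha\psi(\Delta u)$. The only difference is presentational: you treat the two monotone branches separately and flag the $\psi(\pm z)$ bookkeeping, whereas the paper collapses them at once into $2\int_0^{1/\alpha}\psi\bigl(u\Delta\mathcal{K}_\alpha'(x)\bigr)\,dx$ by symmetry of the kernel --- the same implicit symmetrisation of $\psi$ underlies both arguments.
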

\begin{proof}
In the previously described scenario, the characteristic function $\Phi_{\Delta}$ of the increment process $Z_{t+\Delta}-Z_{t}$ has the form
\begin{equation*}
\Phi_{\Delta}(u):=\mathsf{E}\bigl[\exp\bigl(\mathrm{i}u(Z_{t+\Delta}-Z_{t})\bigr)\bigr]=\exp\bigl(\Psi_{\Delta}(u)\bigr),
\end{equation*}
where 
\begin{equation*}
\Psi_{\Delta}(u):=\int_{-\infty}^{\infty}\psi\bigl[u\bigl(\mathcal{K}_{\alpha}(x+\Delta)-\mathcal{K}_{\alpha}(x)\bigr)\bigr]\,dx
\end{equation*}
The last expression can be obtained using Lemma 5.5 in Sato \cite{sato1999levy} and taking into account the fact that
\begin{equation*}
u(Z_{t+\Delta}-Z_{t})=\int_{\mathbb{R}}u\bigl(\mathcal{K}_{\alpha}(x+\Delta)-\mathcal{K}_{\alpha}(x)\bigr)dL_s.
\end{equation*}
We also calculate the first two derivatives of $\mathcal{K}_{\alpha}$, 
\begin{equation*}
\begin{split}
\mathcal{K}'_{\alpha}(x)= & -\left(1-\alpha x\right)^{\frac{1-\alpha}{\alpha}}=-\mathcal{K}_{\alpha}^{1-\alpha}(x),\quad \forall\, x>0\\
\mathcal{K}_{\alpha}''(x)= & (1-\alpha)(1-\alpha x)^{\frac{1-2\alpha}{\alpha}}=(1-\alpha)\mathcal{K}_{\alpha}^{1-2\alpha}(x),\quad \forall\, x>0.
\end{split}
\end{equation*}
Then the characteristic function $\Phi_{\Delta}$ of the increment process $Z_{t+\Delta}-Z_{t}$ has the form:
\begin{equation}
\begin{split}
\Phi_{\Delta}(u) &=\exp\biggl[\int_{-\infty}^{\infty}\psi\bigl(u\bigl(\mathcal{K_{\alpha}}(x+\Delta)-\mathcal{K_{\alpha}}(x)\bigr)\bigr)\,dx\biggr]\\
& =\exp\biggl[2\int_{0}^{1/\alpha}\psi\bigl(u\Delta\mathcal{K}'_{\alpha}(x)\bigr)\,dx+S_{\Delta}(u)\biggr]\\
 & =\exp\biggl[2\int_{0}^{1/\alpha}\frac{\psi\bigl(u\Delta\mathcal{K}'_{\alpha}(x)\bigr)}{\mathcal{K}_{\alpha}''(x)}\,d\mathcal{K}_{\alpha}'(x)+S_{\Delta}(u)\biggr]\\
 & =\exp\biggl[\frac{2}{1-\alpha}\int_{0}^{1}\psi(u\Delta y)y^{\frac{2\alpha-1}{1-\alpha}}\,dy+S_{\Delta}(u)\biggr]\\
 & =\exp\biggr[\frac{2}{1-\alpha}\,(u\Delta)^{-\frac{\alpha}{1-\alpha}}\int_{0}^{u\Delta}\psi(z)z^{\frac{2\alpha-1}{1-\alpha}}\,dz+S_{\Delta}(u)\biggr]\\
 &=\exp\bigl[\mathcal{L}_{\alpha}\psi(u\Delta)+S_{\Delta}(u)\bigr],
\end{split}
\label{Phi}
\end{equation}
where 
\begin{equation}
\begin{split}
S_{\Delta}(u)&=S_{1}(u)+S_{2}(u)\\
&=\int_{-1/\alpha}^{1/\alpha}\left[\psi\bigl(u(\mathcal{K_{\alpha}}(x+\Delta)-\mathcal{K_{\alpha}}(x))\bigr)-\psi(u\Delta\mathcal{K_{\alpha}}'(x))\right]\,dx\\
&+\int_{-1/\alpha-\Delta}^{-1/\alpha}\psi(u\mathcal{K_{\alpha}}(x+\Delta))\,dx.
\label{SDelta}
\end{split}
\end{equation}
\end{proof}
Note that the distribution of $Z_{t+\Delta}-Z_{t}$ is infinitely divisible.
Next, we prove (\ref{eq:Scond}).
\begin{lem}
\label{SDeltap}
 Let
 \begin{equation*}
 \int|x|^{p}\nu(x)\,dx<\infty
 \end{equation*}
 for some $p\in\mathbb{N}.$
 Then applies 
\begin{equation}
\lim_{\Delta\to0}\Delta^{-(p+\alpha)}S_{\Delta}^{(p)}(u/\Delta)=0,
\end{equation} 
where $S_{\Delta}\in C^{p}(\mathbb{R})$ is defined by \ref{SDelta}.
\end{lem}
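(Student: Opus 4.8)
The plan is to differentiate the decomposition $S_\Delta=S_1+S_2$ of Lemma~\ref{LemmaPhi} $p$ times under the integral sign and then estimate the pieces separately on a few subregions dictated by the geometry of $\mathcal K_\alpha$. Differentiation under the integral is legitimate because $\int|x|^p\nu(x)\,dx<\infty$ forces $\psi\in C^p(\mathbb R)$ with $\psi^{(p)}$ \emph{bounded} (for $p\geq2$ one has $\psi^{(p)}(w)=-\sigma^2\,[p=2]+\int(\mathrm iy)^p e^{\mathrm iwy}\nu(y)\,dy$, dominated by $|y|^p\nu$; the case $p=1$ is analogous and simpler). Writing $a(x):=\mathcal K_\alpha(x+\Delta)-\mathcal K_\alpha(x)$ and $b(x):=\Delta\mathcal K_\alpha'(x)$, and noting that for $p\geq2$ the drift and truncation terms drop out, one obtains
\[
S_1^{(p)}(u/\Delta)=\int_{-1/\alpha}^{1/\alpha}\Bigl[a(x)^p\,\psi^{(p)}\bigl(\tfrac u\Delta a(x)\bigr)-b(x)^p\,\psi^{(p)}\bigl(\tfrac u\Delta b(x)\bigr)\Bigr]\,dx .
\]
Since $\tfrac u\Delta b(x)=u\mathcal K_\alpha'(x)$ is $\Delta$-independent and $\tfrac u\Delta a(x)\to u\mathcal K_\alpha'(x)$, the two arguments of $\psi^{(p)}$ collapse to the same limit, and the whole task is to quantify the rate at which their difference vanishes.

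First I would dispose of the purely boundary contributions, each of which lives on a window of length $\mathcal O(\Delta)$ and is therefore harmless. For $S_2$, on $x\in[-1/\alpha-\Delta,-1/\alpha]$ one has $\mathcal K_\alpha(x+\Delta)=\mathcal O((\alpha\Delta)^{1/\alpha})$, whence $|S_2^{(p)}(u/\Delta)|\lesssim\|\psi^{(p)}\|_\infty\,(\alpha\Delta)^{p/\alpha}\Delta=\mathcal O(\Delta^{p/\alpha+1})$. The same bound governs the sub-region of $S_1$ with $s:=1/\alpha-|x|<\Delta$ (there $x+\Delta$ leaves the support, $a(x)=-\mathcal K_\alpha(x)$, and both $a,b$ are $\mathcal O(\Delta^{1/\alpha})$), while the kink region $|x|<\Delta$ around the origin contributes $\mathcal O(\Delta^{p+1})$. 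Because $(1-\alpha)(p/\alpha+1)>0$ and $1-\alpha>0$, all three are $o(\Delta^{p+\alpha})$.

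On the remaining bulk, where $[x,x+\Delta]$ stays inside the support and away from the kink, Taylor's theorem gives $a(x)-b(x)=r(x)$ with $|r(x)|\lesssim\Delta^2|\mathcal K_\alpha''(\xi_x)|$, and I split
\[
a^p\psi^{(p)}(\tfrac u\Delta a)-b^p\psi^{(p)}(\tfrac u\Delta b)=\bigl(a^p-b^p\bigr)\psi^{(p)}(\tfrac u\Delta a)+b^p\Bigl[\psi^{(p)}(\tfrac u\Delta a)-\psi^{(p)}(\tfrac u\Delta b)\Bigr].
\]
The first summand is controlled by $|a^p-b^p|\leq p\,M^{p-1}|r|$ with $M:=\max(|a|,|b|)\lesssim\Delta|\mathcal K_\alpha'|$; upon integration the exponents combine to $|\mathcal K_\alpha'|^{p-1}|\mathcal K_\alpha''|\sim(\alpha s)^{[p(1-\alpha)-\alpha]/\alpha}$, which is integrable at $s=0$ precisely because $p(1-\alpha)>0$, yielding $\mathcal O(\Delta^{p+1})=o(\Delta^{p+\alpha})$.

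The heart of the matter, and the step I expect to be the real obstacle, is the second summand, since with exactly $p$ moments $\psi^{(p)}$ is only uniformly continuous (so a dominated-convergence argument gives the limit $0$ but no explicit rate). To obtain the sharp rate I would use the slightly stronger integrability guaranteed by Assumption~\ref{assumption1}: if $\int|y|^{p+\theta}\nu(y)\,dy<\infty$ for some $\theta>\alpha$, then the Lévy--Khintchine form yields $|\psi^{(p)}(w_1)-\psi^{(p)}(w_2)|\lesssim|w_1-w_2|^\theta\int|y|^{p+\theta}\nu(y)\,dy$, so the bracket is $\lesssim(\tfrac{|u|}\Delta|r|)^\theta\lesssim(\Delta|\mathcal K_\alpha''|)^\theta$. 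Multiplying by $|b|^p=\Delta^p|\mathcal K_\alpha'|^p$ and integrating over the boundary layer with the self-similar scaling $s\sim\Delta^{\alpha/(2\alpha-1)}$ (the scale on which $\Delta\mathcal K_\alpha''\sim1$ when $\alpha>\tfrac12$), the powers of $s$ recombine so that the net exponent of $\Delta$ strictly exceeds $p+\alpha$; this is exactly where the factor $\Delta^\alpha$ originates and where $\theta>\alpha$ is used. The singular endpoint behaviour $\mathcal K_\alpha''(x)\sim(1-\alpha)(\alpha s)^{(1-2\alpha)/\alpha}\to\infty$ for $\alpha>1/2$ is what makes this estimate delicate; for $\alpha\leq1/2$ the second derivative is bounded and the computation is routine, and one could alternatively exploit the oscillation $e^{\mathrm iu y\mathcal K_\alpha'(x)}$ (integration by parts in $x$, trading powers of $y$ for kernel derivatives) to avoid invoking any extra moment. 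Collecting the bulk and boundary estimates gives $S_\Delta^{(p)}(u/\Delta)=o(\Delta^{p+\alpha})$, which is the claim.
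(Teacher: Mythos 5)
Your proposal is correct and follows essentially the same route as the paper's own proof: the same differentiation of $S_\Delta=S_1+S_2$ under the integral sign, the same algebraic split of $S_1^{(p)}$ into a difference-of-powers term and a difference-of-$\psi^{(p)}$ term (the paper's $I_1$ and $I_2$), the same Taylor-remainder-plus-integrability-of-$\mathcal{K}_\alpha''$ bound for the first term, and the same $\mathcal{O}(\Delta^{p/\alpha+1})$ bound for $S_2$ (the paper gets it via the substitution $y=\mathcal{K}_\alpha(x)$, you by a direct sup bound; the exponents agree since $(p+\alpha)/\alpha=p/\alpha+1$). The one genuine divergence is precisely the term you single out as the heart of the matter: for $I_2$ the paper merely asserts $|I_2|\lesssim\Delta^{2p}$ in \eqref{S1p} with no supporting argument, and in fact no polynomial rate can follow from $\int|x|^{p}\nu(x)\,dx<\infty$ alone, since then $\psi^{(p)}$ is only bounded and uniformly continuous, so dominated convergence yields $I_2=o(\Delta^{p})$, which after multiplication by $\Delta^{-(p+\alpha)}$ does not tend to zero. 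Your repair --- importing the surplus moments of Assumption~\ref{assumption1} to get $\theta$-H\"older continuity of $\psi^{(p)}$ with $\theta>\alpha$, hence $|I_2|\lesssim\Delta^{p+\theta}=o(\Delta^{p+\alpha})$ --- is sound, and it is arguably what is needed for the lemma to hold at the claimed rate; the price is that you prove the statement under a slightly stronger moment hypothesis than the lemma announces (though exactly the hypothesis available where the paper applies it). Your explicit treatment of the boundary windows (the kink of $\mathcal{K}_\alpha$ at the origin and the exit from the support near $|x|=1/\alpha$), which the paper passes over silently but which is required for the Taylor bound \eqref{KDiff} to be legitimate, is also correct. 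In short: same skeleton, but where the paper is sketchy your argument identifies the gap and closes it.
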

\begin{proof}
We have
\begin{equation*}
\begin{split}
S_{1}^{(p)}(u)&=\int_{-1/\alpha}^{1/\alpha}\bigl(\mathcal{K_{\alpha}}(x+\Delta)-\mathcal{K_{\alpha}}(x)\bigr)^{p}\psi^{(p)}\bigl(u(\mathcal{K_{\alpha}}(x+\Delta)-\mathcal{K_{\alpha}}(x))\bigr)dx\\
&\hspace{2.5cm}-\int_{-1/\alpha}^{1/\alpha}\bigl(\Delta\mathcal{K_{\alpha}}'(x)\bigr)^{p}\psi^{(p)}\bigl(u\Delta\mathcal{K_{\alpha}}'(x)\bigr)dx\\
&=\int_{-1/\alpha}^{1/\alpha}\bigl[\bigl(\mathcal{K_{\alpha}}(x+\Delta)-\mathcal{K_{\alpha}}(x)\bigr)^{p}-\bigl(\Delta\mathcal{K_{\alpha}}'(x)\bigr)^{p}\bigr]\\
& \hspace{2.5cm}\times\psi^{(p)}\bigl(u\bigl(\mathcal{K_{\alpha}}(x+\Delta)-\mathcal{K_{\alpha}}(x)\bigr)\bigr)dx\\
&+\int_{-1/\alpha}^{1/\alpha}\bigl(\Delta\mathcal{K}_{\alpha}'(x)\bigr)^{p}\bigl[\psi^{(p)}(u(\mathcal{K}_{\alpha}(x+\Delta)-\mathcal{K}_{\alpha}(x)))-\psi^{(p)}(u\Delta\mathcal{K}_{\alpha}'(x))\bigr]dx\\
&:= I_1+I_2.
\end{split}
\end{equation*}
Using the fact that $\bigl\Vert \mathcal{K_{\alpha}}'\bigr\Vert _{\infty}<\infty,$  we derive the integral form of the remainder term in the Taylor's formula: 
\begin{equation}
\label{KDiff}
\begin{split}
\bigl|(\mathcal{K_{\alpha}}(x+\Delta)-\mathcal{K_{\alpha}}(x))^{p}-(\Delta\mathcal{K_{\alpha}}'(x))^{p}\bigr|	&\leq B_{\alpha}\Delta^{p}\int_{x}^{x+\Delta}(x+\Delta-t)\bigl|\mathcal{K_{\alpha}}''(t)\bigr|dt\\
&=B_{\alpha}\Delta^{p}\int_{0}^{\Delta}(\Delta-t)\bigl|\mathcal{K_{\alpha}}''(t+x)\bigr|dt.
\end{split}
\end{equation}
If $\int_{\mathbb{R}}\bigl|\mathcal{K_{\alpha}}''(t)\bigr|\,dt<\infty,$ then we have 
\begin{equation}
\label{S1p}
\left|I_{1}\right|\lesssim\Delta^{p+1},\quad\left|I_{2}\right|\lesssim\Delta^{2p},\quad\Delta\to0.
\end{equation}
Similarly 
\begin{equation*}
\begin{split}
S_{2}^{(p)}(u)&=\int_{-1/\alpha}^{-1/\alpha+\Delta}(\mathcal{K}_{\alpha}(x))^{p}\psi^{(p)}\bigl(u(\mathcal{K}_{\alpha}(x))\bigr)dx\\
&=\int_{-1/\alpha}^{-1/\alpha+\Delta}\frac{\left(\mathcal{K}_{\alpha}(x)\right)^{p}}{\mathcal{K}_{\alpha}'(x)}\psi^{(p)}\bigl(u(\mathcal{K}_{\alpha}(x))\bigr)d\mathcal{K}_{\alpha}(x)\\
&=\int_{0}^{(\alpha\Delta)^{1/\alpha}}y^{p+\alpha-1}\psi^{(p)}(uy)\,dy\\
&=u^{-p-\alpha}\int_{0}^{u(\alpha\Delta)^{1/\alpha}}z^{p+\alpha-1}\psi^{(p)}(z)\,dz.
\end{split}
\end{equation*} 
Further 
\begin{equation*}
\psi^{(p)}(z)=\mathrm{i}^p \int_{\mathbb{R}}x^p e^{\mathrm{i}uz}\nu(x)\,dx\lesssim \int|x|^{p}\nu(x)\,dx
\end{equation*}
and hence
\begin{equation}
\label{S2p}
\begin{split}
S_{2}^{(p)}(u/\Delta)&=(u/\Delta)^{-p-\alpha}\int_{0}^{u(\alpha\Delta)^{1/\alpha}\Delta^{-1}}z^{p+\alpha-1}\psi^{(p)}(z)\,dz\\
&\lesssim (u/\Delta)^{-p-\alpha}\int_{0}^{u(\alpha\Delta)^{1/\alpha}\Delta^{-1}}z^{p+\alpha-1}\left( \int|x|^{p}\nu(x)\,dx\right)dz\\
&\lesssim \Delta^{(p+\alpha)/\alpha}.
\end{split}
\end{equation}
Combining \ref{S1p} and \ref{S2p}, we get
\begin{equation*}
S_{\Delta}^{(p)}(u/\Delta)\lesssim \Delta^{p+1}.
\end{equation*}
Since $0<\alpha<1$, it follows that
\begin{equation*}
\lim_{\Delta\to0}\Delta^{-(p+\alpha)}S_{\Delta}^{(p)}(u/\Delta)=0.
\end{equation*}
\end{proof}
Next, we formulate and prove some auxiliary lemmas that we need to prove the main results.
In the sequel we assume that \(h_n,\Delta\to 0\) as \(n\to \infty\) and \(\lesssim\) stands for inequality up to a constant not depending on \(h_n,\Delta\) and \(n.\)
\begin{lem}
\label{eq:Phi-low}
We have for any \(h_n>0\) with $h_n^{3}\gtrsim\Delta,$
\begin{eqnarray*}
\inf_{|u|\leq h_n^{-1}}\bigl|\Phi_{\Delta X}(u)\bigr|\gtrsim 1.
\end{eqnarray*}
\label{infPhi}
\end{lem}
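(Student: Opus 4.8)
The plan is to reduce the claim to a bound on the real part of the exponent $\Delta\Psi$. Recall that $\Phi_{\Delta X}(u)=\exp(\Delta\,\mathcal{L}_\alpha\psi(u))$, so that $|\Phi_{\Delta X}(u)|=\exp\bigl(\Delta\,\Re\,\mathcal{L}_\alpha\psi(u)\bigr)$. Since $(\Delta X)_1$ is real, $|\Phi_{\Delta X}(-u)|=|\Phi_{\Delta X}(u)|$, and it suffices to treat $u\ge 0$. Because $\mathcal{L}_\alpha$ is a real linear operator with a nonnegative kernel on $(0,\infty)$, we have $\Re\,\mathcal{L}_\alpha\psi=\mathcal{L}_\alpha(\Re\,\psi)$; moreover $\Re\,\psi\le 0$ already gives $|\Phi_{\Delta X}(u)|\le 1$, and the whole lemma reduces to showing that $\Delta\,\bigl|\mathcal{L}_\alpha(\Re\,\psi)(u)\bigr|$ stays bounded uniformly over $0\le u\le h_n^{-1}$.

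The key estimate is the quadratic control $|\Re\,\psi(z)|\lesssim z^2$. This follows from $\Re\,\psi(z)=-\tfrac12\sigma^2 z^2+\int_{\mathbb{R}}(\cos(zx)-1)\,\nu(dx)$ together with $1-\cos(zx)\le \tfrac12 z^2x^2$ and the standing finite second moment condition $\int x^2\nu(dx)<\infty$. Plugging this into the definition of $\mathcal{L}_\alpha$, I would then estimate $\bigl|\mathcal{L}_\alpha(\Re\,\psi)(u)\bigr|\lesssim u^{-\alpha/(1-\alpha)}\int_0^u z^{2+\frac{2\alpha-1}{1-\alpha}}\,dz$. The exponent of $z$ collapses to $\frac{1}{1-\alpha}$, so the integral equals a constant times $u^{\frac{2-\alpha}{1-\alpha}}$; after multiplying by the prefactor $u^{-\alpha/(1-\alpha)}$ the powers combine to exactly $u^2$, whence $|\Re\,\Psi(u)|\lesssim u^2$.

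Finally, for $0\le u\le h_n^{-1}$ this yields $\Delta\,|\Re\,\Psi(u)|\lesssim \Delta h_n^{-2}$, and the assumption $h_n^3\gtrsim\Delta$ gives $\Delta h_n^{-2}\lesssim h_n\lesssim 1$. Therefore $|\Phi_{\Delta X}(u)|=\exp\bigl(\Delta\,\Re\,\Psi(u)\bigr)\ge \exp(-C)\gtrsim 1$ uniformly over $|u|\le h_n^{-1}$, which is the assertion. I expect the only delicate point to be the exponent bookkeeping inside $\mathcal{L}_\alpha$: one must verify that the power $z^{2+\frac{2\alpha-1}{1-\alpha}}$ is integrable at $0$ and that the cancellation of the prefactor against $u^{\frac{2-\alpha}{1-\alpha}}$ produces precisely $u^2$ rather than a larger power of $u$ (which would break the conclusion). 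Once that algebra is pinned down, the remainder is the elementary moment bound together with the bandwidth condition $h_n^3\gtrsim\Delta$.
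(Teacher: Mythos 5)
Your proof is correct and follows essentially the same route as the paper: a quadratic bound on the characteristic exponent via the finite second moment, the fact that $\mathcal{L}_\alpha$ maps $z^2$ to a constant multiple of $u^2$, and the bandwidth condition giving $\Delta h_n^{-2}\lesssim h_n\lesssim 1$. Your restriction to $\Re\,\psi$ (which kills the drift term and yields a pure $z^2$ bound) is a minor tidying of the paper's bound $|\psi(u)|\lesssim u^{2}+|u|$, and your explicit exponent bookkeeping inside $\mathcal{L}_\alpha$ is precisely the computation the paper leaves implicit.
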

\begin{proof}
Recall that
\begin{equation*}
\Phi_{\Delta X}(u)=\exp\left[\frac{2\Delta}{1-\alpha}u^{-\frac{\alpha}{1-\alpha}}\int_{0}^{u}\psi(z)z^{\frac{2\alpha-1}{1-\alpha}}\,dz\right].
\end{equation*}
By using the Taylor expansion  we obtain for any $u,\ x\in\mathbb{R},$
\begin{eqnarray*}
 \bigl|e^{\mathrm{i}u x}-1-\mathrm{i}ux\mathbb{I}_{|x|\leq1}\bigr|\leq\left|\frac{x^{2}u^{2}}{2}\right|,
 \end{eqnarray*}
so that 
\begin{equation*}
|\psi(u)|\leq\frac{\sigma^{2}u^{2}}{2}+|\gamma u|+\frac{u^{2}}{2}\int_{\mathbb{R}}x^{2}\,\nu(x)\,dx\lesssim h_n^{-2}.
\end{equation*} 
Then the infimum of $\Phi_{\Delta X}(u)$ can under the condition $h_n^{3}\gtrsim\Delta$ be estimated by
\begin{equation*}
\begin{split}
\inf_{|u|\leq h_n^{-1}}\bigl|\Phi_{\Delta X}(u)\bigr| & \geq \exp \biggl(-\Delta\sup_{|u|\leq h_n^{-1}}\biggl|\frac{2}{1-\alpha}u^{-\frac{\alpha}{1-\alpha}}\int_{0}^{u}\psi(z)z^{\frac{2\alpha-1}{1-\alpha}}\,dz\biggr|\biggr)\\
 & \geq \exp \biggl(-\Delta\sup_{|u|\leq h_n^{-1}}\biggl|\frac{\sigma^{2}u^{2}}{2}+\gamma u\biggr|\biggr)\\
 &\gtrsim e^{-\Delta h_n^{-2}}\gtrsim e^{-h_n}\gtrsim 1.
 \end{split}
\end{equation*}
This completes the proof.
\end{proof}

\begin{lem}
Define $\rho(x)=x^{2}\nu(x)$, then $\left\Vert \rho\ast\left(h_n^{-1}W\left(\cdot/h_n\right)\right)-\rho\right\Vert _{\mathbb{R}}\lesssim h_n^{r}$,  where $\ast$ denotes the convolution.
\label{2TermRest}
\end{lem}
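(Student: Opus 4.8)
The plan is to recognise this as the classical bias estimate for a kernel of order $p$ and to exploit the vanishing-moment conditions on $W$ together with the H\"older smoothness of $\rho$ supplied by Assumption~\ref{assumption1}\ref{ass3}. First I would put the convolution into a convenient form via the substitution $y=h_n t$, which gives
\[
\bigl[\rho\ast(h_n^{-1}W(\cdot/h_n))\bigr](x)=\int_{\mathbb{R}}\rho(x-h_n t)\,W(t)\,dt,
\]
and then, using $\int_{\mathbb{R}}W(t)\,dt=1$, write the quantity to be bounded as
\[
\bigl[\rho\ast(h_n^{-1}W(\cdot/h_n))\bigr](x)-\rho(x)=\int_{\mathbb{R}}\bigl[\rho(x-h_n t)-\rho(x)\bigr]W(t)\,dt.
\]

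Next I would apply Taylor's theorem to $\rho(x-h_n t)$ about $x$, expanding the polynomial part up to degree $p-1$ and writing the degree-$p$ term in Lagrange form $\frac{(-h_n t)^{p}}{p!}\rho^{(p)}(\xi)$ for some $\xi$ between $x$ and $x-h_n t$; this is legitimate since $\rho$ is $p$-times differentiable. The central observation is that the conditions $\int_{\mathbb{R}}t^{l}W(t)\,dt=0$, $l=1,\ldots,p$, annihilate every monomial term of degrees $1$ through $p$ when integrated against $W$. Subtracting in addition the (vanishing) term $\frac{(-h_n t)^{p}}{p!}\rho^{(p)}(x)$, I obtain
\[
\bigl[\rho\ast(h_n^{-1}W(\cdot/h_n))\bigr](x)-\rho(x)=\int_{\mathbb{R}}\frac{(-h_n t)^{p}}{p!}\bigl[\rho^{(p)}(\xi)-\rho^{(p)}(x)\bigr]W(t)\,dt,
\]
so that only the difference of $\rho^{(p)}$ at $\xi$ and at $x$ survives.

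Then I would invoke the $(r-p)$-H\"older continuity of $\rho^{(p)}$ together with the bound $|\xi-x|\leq|h_n t|$ to estimate, pointwise in $x$ and $t$, the difference $\bigl|\rho^{(p)}(\xi)-\rho^{(p)}(x)\bigr|\lesssim|h_n t|^{\,r-p}$. Substituting this into the last display yields
\[
\bigl|\bigl[\rho\ast(h_n^{-1}W(\cdot/h_n))\bigr](x)-\rho(x)\bigr|\lesssim h_n^{r}\int_{\mathbb{R}}|t|^{r}\,|W(t)|\,dt,
\]
and since $p<r\leq p+1$ the integral $\int_{\mathbb{R}}|t|^{r}|W(t)|\,dt$ is finite, being dominated by $\int_{\mathbb{R}}|W(t)|\,dt$ and $\int_{\mathbb{R}}|t|^{p+1}|W(t)|\,dt$, both finite by the assumed properties of $W$. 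As this bound is uniform in $x$, taking the supremum over $\mathbb{R}$ gives the claimed estimate $\bigl\Vert\rho\ast(h_n^{-1}W(\cdot/h_n))-\rho\bigr\Vert_{\mathbb{R}}\lesssim h_n^{r}$.

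I do not anticipate any serious obstacle: the argument is the standard order-$p$ kernel bias computation, and the only points requiring care are the bookkeeping that the vanishing moments of $W$ match exactly the orders appearing in the Taylor expansion, and the verification that $\int_{\mathbb{R}}|t|^{r}|W(t)|\,dt<\infty$ under the hypotheses on $W$ combined with $p<r\leq p+1$.
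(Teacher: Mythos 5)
Your proposal is correct and follows essentially the same route as the paper's own proof: the reduction to $\int\{\rho(x-h_n t)-\rho(x)\}W(t)\,dt$, a Taylor expansion with Lagrange remainder, annihilation of the monomial terms by the vanishing moments of $W$ (including the subtraction of the degree-$p$ term at $x$), and the $(r-p)$-H\"older bound on $\rho^{(p)}$ yielding $h_n^{r}\int|t|^{r}|W(t)|\,dt$. Your explicit verification that $\int_{\mathbb{R}}|t|^{r}|W(t)|\,dt<\infty$ via $\int|W|$ and $\int|t|^{p+1}|W|$ is a small point the paper leaves implicit, but otherwise the two arguments coincide.
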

\begin{proof}
Using the change of variables, we may rewrite the term as
\begin{eqnarray*}
 \rho\left(h_n^{-1}W\left(\cdot/h_n\right)\right)-\rho(x)=\int_{\mathbb{R}}\left\{ \rho(x-yh_n)-\rho(x)\right\} W(y)\,dy.
\end{eqnarray*}
By the Tailor's expansion  we obtain  for any $x,y\in\mathbb{R},$
\begin{eqnarray*}
\rho(x-yh_n)-\rho(x)=\stackrel[i=1]{p-1}{\sum}\frac{\rho^{(l)}(x)}{l!}(-yh_n)^{l}+\frac{\rho^{(p)}(x-\theta yh_n)}{p!}(-yh_n)^{p},
\end{eqnarray*}
for some $\theta\in(0,1$). Since $\rho^{(p)}\left(r-p\right)$ $\alpha-$Hölder continuous,
we can assert that $H\coloneqq\underset{x,y\in\mathbb{R},x\neq y}{\sup}\frac{\left|\rho^{(p)}(x)-\rho^{(p)}(y)\right|}{|x-y|^{r-p}}<\infty.$

Furthermore, since $\int_{\mathbb{R}}y^{l}W(y)\,dy=0,\ for\ l=1,\ldots,p,$
 we conclude that for any $x\in R,$
\begin{equation*}
\begin{split}
\biggl|\int_{\mathbb{R}}\bigl\{\rho(x-yh_n)-\rho(x)\bigr\} W(y)dy\biggr|&=\biggl|\int_{\mathbb{R}}\bigl[\rho(x-yh_n)-\rho(x)-\stackrel[i=1]{p}{\sum}\frac{\rho^{l}(x)}{l!}(-yh_n)^{l}\bigr]W(y)\,dy\biggr|\\
&=\biggl|\int_{\mathbb{R}}\bigl[\frac{\rho^{(p)}(x-\theta yh_n)-\rho^{(p)}(x)}{p!}(-yh_n)^{p}\bigr]W(y)\,dy\biggr|\\
&=\biggl|\int_{\mathbb{R}}\bigl[\frac{\rho^{(p)}(x-\theta yh_n)-\rho^{(p)}(x)}{p!(-\theta yh_n)^{r-p}}(-yh_n)^{r}\theta^{r-p}\bigr]W(y)\,dy\biggr|\\
 &\leq\frac{Hh_n^{r}}{p!}\int_{\mathbb{R}}|y|^{r}|W(y)|\,dy.
 \end{split}
\end{equation*}
This completes the proof.
\end{proof}
\begin{lem}
\label{DifPhiPhi}
Suppose that $\int_{\mathbb{R}}|x|^{p}\nu(x)\,dx<\infty $ for some $p\geq1$ and let 
\begin{equation}
\label{Psi}
\Psi(u):=\frac{2}{1-\alpha}\int_{0}^{1}\psi(u y)y^{\frac{2\alpha-1}{1-\alpha}}\,dy,
\end{equation}
then we have \footnote{The supremum is found on the interval $I_{1}$ so that $\varphi_{W}(uh_n)$ supported in $\left[-1,1\right]$  }
\begin{enumerate}[label=(\roman*)] 
\item 
$\ \bigl\Vert \Psi'\bigr\Vert_{I_{h}}\lesssim \frac{1}{h_n},$
\item 
$\ \bigl\Vert\Psi^{(k)}\bigr\Vert_{I_{h}}\lesssim 1,\quad k\geq2.$
\end{enumerate}
\end{lem}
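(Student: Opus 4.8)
The plan is to exploit the scale-invariant form of $\Psi=\mathcal{L}_\alpha\psi$ in \eqref{Psi}, which results from the substitution $z=uy$ in Lemma~\ref{LemmaPhi}, and simply to differentiate under the integral sign. This gives
\begin{equation*}
\Psi^{(k)}(u)=\frac{2}{1-\alpha}\int_0^1\psi^{(k)}(uy)\,y^{\,k+\frac{2\alpha-1}{1-\alpha}}\,dy,\qquad k\geq 1,
\end{equation*}
where I note that the weight exponent simplifies to $k+\frac{2\alpha-1}{1-\alpha}=(k-1)+\frac{\alpha}{1-\alpha}$, which is strictly greater than $-1$ for every $k\geq 1$ because $\alpha\in(0,1)$. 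Hence the weight is integrable on $(0,1)$, and the interchange of differentiation and integration is legitimate by dominated convergence once the derivatives $\psi^{(k)}$ are controlled.

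The substance is then to bound $\psi^{(k)}$ pointwise from the L\'evy--Khintchine representation \eqref{LK1}. For $k\geq 2$ one differentiates under the L\'evy integral to get $\psi^{(k)}(v)=\mathrm{i}^{k}\int_{\mathbb{R}}x^{k}e^{\mathrm{i}vx}\nu(x)\,dx$ (with an extra $-\sigma^{2}$ for $k=2$), so that
\begin{equation*}
\sup_{v\in\mathbb{R}}\bigl|\psi^{(k)}(v)\bigr|\leq \sigma^{2}\mathbf{1}_{\{k=2\}}+\int_{\mathbb{R}}|x|^{k}\nu(x)\,dx\lesssim 1,
\end{equation*}
the integral being finite for the relevant $k$ under the moment hypothesis (near the origin $|x|^{k}\leq x^{2}$, away from it $|x|^{k}\leq|x|^{p}$). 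For $k=1$ the Gaussian part contributes $-\sigma^{2}v$, and using $|e^{\mathrm{i}vx}-1|\leq|vx|$ in the Taylor bound for the jump part yields only the linear growth $|\psi'(v)|\lesssim 1+|v|$. Feeding these into the displayed formula gives (ii) directly, since for $k\geq 2$ the uniform bound pulls out of the integral and leaves $\int_0^1 y^{(k-1)+\frac{\alpha}{1-\alpha}}\,dy\lesssim 1$ uniformly in $u$. For (i) I split $|\psi'(uy)|\lesssim 1+|u|y$: the constant part integrates to an $O(1)$ term, while the linear part produces $|u|\int_0^1 y^{1+\frac{\alpha}{1-\alpha}}\,dy\lesssim|u|$, which on $I_{h}=\{|u|\leq h_n^{-1}\}$ (the range forced by the support of $\varphi_W(\cdot\,h_n)$) is $\lesssim h_n^{-1}$.

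The argument is essentially bookkeeping, and the only genuinely delicate points — the place where one must be careful rather than where the proof could truly fail — are the justification of differentiation under both integral signs and the integrability of the inner weight at $y=0$. The former follows from the standing moment assumption, with dominating functions $|x|^{k}\nu(x)$ for the L\'evy integral and a constant multiple of $y^{(k-1)+\frac{\alpha}{1-\alpha}}$ for the outer integral; the latter is guaranteed precisely because $\alpha\in(0,1)$ keeps the exponent above $-1$. The conceptual heart of the statement is the contrast between the uniformly bounded higher derivatives ($k\geq 2$) and the single factor of $h_n^{-1}$ coming from the linear growth of $\psi'$, which is exactly the loss of one power of $h_n$ that later dictates the bandwidth conditions in Assumption~\ref{assumption1}.
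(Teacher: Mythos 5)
Your proposal is correct and takes essentially the same route as the paper: differentiate under the integral sign to get $\Psi^{(k)}(u)=\frac{2}{1-\alpha}\int_{0}^{1}y^{k}\psi^{(k)}(uy)\,y^{\frac{2\alpha-1}{1-\alpha}}\,dy$, then bound $\psi^{(k)}$ from the L\'evy--Khintchine representation, with the uniform bound for $k\geq 2$ and the linear growth of $\psi'$ yielding the factor $h_n^{-1}$ on $I_h$. If anything, you spell out the bounding and integrability details that the paper compresses into ``and the assertion follows.''
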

\begin{proof}
Under the assumption we have 
\begin{equation*}
\begin{split}
&\Psi'(u)=\frac{2}{1-\alpha}\int_{0}^{1}y\psi'(uy)y^{\frac{2\alpha-1}{1-\alpha}}\,dy\\
&\hspace{0.87cm}=\frac{2}{1-\alpha}\int_{0}^{1}\biggl(-\sigma^{2}uy^{2}+\mathrm{i}y\bigl(\gamma+\int_{\mathbb{R}}x(e^{\mathrm{i}uyx}-\mathbb{I}_{\{|x|\leq1\}})\nu(dx)\bigr)\biggr)y^{\frac{2\alpha-1}{1-\alpha}}\,dy,\\
&\Psi''(u)=\frac{2}{1-\alpha}\int_{0}^{1}y^{2}\psi''(uy)y^{\frac{2\alpha-1}{1-\alpha}}\,dy	=\int_{0}^{1}\biggl(-\sigma^{2}y^{2}-y^{2}\int_{\mathbb{R}}x^{2}e^{\mathrm{i}uyx}\nu(dx)\biggr)y{}^{\frac{2\alpha-1}{1-\alpha}}\,dy,\\
&\Psi^{(k)}(u)=\frac{2}{1-\alpha}\int_{0}^{1}y^{k}\psi^{(k)}(uy)y^{\frac{2\alpha-1}{1-\alpha}}\,dy
	=\int_{0}^{1}y^{k}\biggl[\int_{\mathbb{R}}x^{k}e^{\mathrm{i}uyx}\nu(dx)\biggr]y^{\frac{2\alpha-1}{1-\alpha}}\,dy,\,\, k>2.
\end{split}
\end{equation*}
and the assertion follows. 
\end{proof}
\begin{lem}
\label{Moments}
For $k\in \mathbb{N}$ we have
\begin{equation*}
\mathsf{E}\bigl[|\Delta X
|^{2k}\bigl]\lesssim \Delta,
\end{equation*}
where $(\Delta X)_t$ the increment process of the limiting L\'evy process $(X_t)$.
\end{lem}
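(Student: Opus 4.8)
The plan is to use that each increment $(\Delta X)_j$ is infinitely divisible with characteristic function $\Phi_{\Delta X}(u)=\exp(\Delta\Psi(u))$, $\Psi=\mathcal{L}_\alpha\psi$, so that its cumulant generating function $\Delta\Psi$ is exactly linear in $\Delta$. Since $2k$ is even we have $|\Delta X|^{2k}=(\Delta X)^{2k}$, and the moment is recovered from the characteristic function as $\mathsf{E}[(\Delta X)^{2k}]=(-1)^{k}\Phi_{\Delta X}^{(2k)}(0)$, provided this derivative exists. First I would verify existence, i.e. that $\Psi$ is $2k$ times differentiable at the origin. Differentiating under the integral in $\Psi(u)=\frac{2}{1-\alpha}\int_{0}^{1}\psi(uy)\,y^{\frac{2\alpha-1}{1-\alpha}}\,dy$ (as in Lemma~\ref{DifPhiPhi}) gives $\Psi^{(m)}(0)=\psi^{(m)}(0)\,\frac{2}{1-\alpha}\int_{0}^{1}y^{m+\frac{2\alpha-1}{1-\alpha}}\,dy$; for $\alpha\in(0,1)$ and $m\ge 1$ the exponent $m+\frac{2\alpha-1}{1-\alpha}$ exceeds $-1$, so the $y$-integral is a finite constant, while $\psi^{(m)}(0)$ is finite for every $m\le 2k$ as soon as $\int_{\mathbb{R}}|x|^{2k}\nu(x)\,dx<\infty$ (for $m=1$ the truncation in \eqref{LK1} removes the small-jump singularity). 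This is precisely the moment condition on $\nu$ whose finiteness is equivalent to that of $\mathsf{E}|\Delta X|^{2k}$, as it transfers to the L\'evy measure of $X$ through the scaling built into $\mathcal{L}_\alpha$, and for the orders $2k$ relevant here it is supplied by Assumption~\ref{assumption1}\ref{ass1}.

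With differentiability in hand, I would expand $\Phi_{\Delta X}^{(2k)}(0)=(\exp\circ\,\Delta\Psi)^{(2k)}(0)$ by the moment--cumulant (Fa\`a di Bruno) formula. Because $\Psi(0)=0$, the prefactor $\exp(\Delta\Psi(0))=1$, and one obtains the finite sum $\mathsf{E}[(\Delta X)^{2k}]=\sum_{\pi}\prod_{B\in\pi}\kappa_{|B|}$ over all partitions $\pi$ of $\{1,\dots,2k\}$, where the cumulants are $\kappa_{m}=\Delta\,(-\mathrm{i})^{m}\Psi^{(m)}(0)$. The decisive point is that every cumulant is exactly proportional to $\Delta$, so a partition $\pi$ with $|\pi|$ blocks contributes a term of order $\Delta^{|\pi|}$: the single-block partition yields $(-1)^{k}\Delta\,\Psi^{(2k)}(0)=O(\Delta)$, while each of the remaining (finitely many) partitions has $|\pi|\ge 2$ and contributes $O(\Delta^{2})$. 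Since all factors $\Psi^{(|B|)}(0)$ are finite constants depending only on $k,\alpha$ and the triplet $(\gamma,\sigma,\nu)$, but not on $\Delta$ or $n$, summing gives $\mathsf{E}[(\Delta X)^{2k}]=O(\Delta)$, i.e. $\lesssim\Delta$, as claimed. Note this is an upper bound and needs no non-degeneracy of the leading term, so it persists even in degenerate cases (e.g. pure drift, where the moment is $O(\Delta^{2k})$).

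The computation is largely routine, and the two points that need care are: (a) the justification of differentiation under the integral sign defining $\Psi$, where the hypothesis $\int|x|^{2k}\nu(x)\,dx<\infty$ enters and secures finiteness of all $\Psi^{(m)}(0)$, $m\le 2k$; and (b) the bookkeeping of the powers of $\Delta$ in the partition sum, isolating the single-block term as the genuine leading order. I expect (b) to be the only, and minor, obstacle, since one must make sure no hidden $\Delta$-dependence or cancellation spoils the $\Delta^{1}$ bound; the cumulant representation handles this transparently, as each cumulant carries exactly one factor of $\Delta$ and the minimal block number is one.
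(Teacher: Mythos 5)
Your proposal is correct and takes essentially the same route as the paper: both recover the moment as $(-1)^{k}\Phi_{\Delta X}^{(2k)}(0)$ with $\Phi_{\Delta X}=\exp(\Delta\Psi)$ and observe that every term in the expansion of the $2k$-th derivative carries at least one factor of $\Delta$; your cumulant/partition (Fa\`a di Bruno) sum is exactly the expansion the paper writes as $\Delta\,\Phi_{\Delta X}(u)\bigl(\Psi^{(2k)}(u)+a_{1}\Delta\Psi^{(2k-1)}(u)\Psi'(u)+\ldots+\Delta^{2k-1}(\Psi'(u))^{2k}\bigr)$ evaluated at $u=0$. Your additional verification that the $\Psi^{(m)}(0)$ are finite under the moment condition on $\nu$ makes explicit a point the paper leaves implicit, but it does not change the argument.
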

\begin{proof}
Since $\Phi_{\Delta X}(u)=\exp\bigl[ \Delta \Psi(u)\bigr],$ we have
\begin{equation*}
\mathsf{E}\bigl[\Delta X^{2k}\bigr]=(-\mathrm{i})^{2k}\Phi_{\Delta X}^{(2k)}(0)=(-1)^{k}\frac{d^{2k}}{du^{2k}}\exp\bigl[\Delta\Psi(u)\bigr]\mid_{u=0}.
\end{equation*}
Note that
\begin{equation*}
\Phi_{\Delta X}^{(2k)}(u)=\Delta \Phi_{\Delta X}(u)\bigl(\Psi^{(2k)}(u)+a_{1}\Delta \Psi^{(2k-1)}(u)\Psi'(u)+\ldots+\Delta^{2k-1}(\Psi'(u))^{2k}\bigr)
\end{equation*}
where $a_{i}\in\mathbb{R},$ for $i=1,\ldots,2k-1$.
 This observation completes the proof.
\end{proof}
\begin{lem}
\label{dPhiest}
For $k=0,1,2,3$ we have
\begin{equation*}
\left\Vert \mathbb{D}^{(k)}(u)\right\Vert _{I_{h}}=\mathcal{O}_{P}\bigl(n^{-1/2}\Delta^{(k \wedge 1 )/2} \log h_n^{-1}\bigr), 
\end{equation*}
where $\mathbb{D}(u)=\widehat{\Phi}_{\Delta X}(u)-\Phi_{\Delta X}(u).$
\end{lem}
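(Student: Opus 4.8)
The plan is to view $\mathbb{D}^{(k)}$ as a centred empirical process indexed by the frequency $u$ and to combine a pointwise concentration inequality with a discretisation over the compact window $I_h=\{|u|\le h_n^{-1}\}$. Differentiating $\widehat{\Phi}_{\Delta X}(u)=\frac1n\sum_{j=1}^n e^{\mathrm{i}u(\Delta X)_j}$ and subtracting the expectation gives
\[
\mathbb{D}^{(k)}(u)=\frac1n\sum_{j=1}^n\zeta^{(k)}_j(u),\qquad
\zeta^{(k)}_j(u):=(\mathrm{i}(\Delta X)_j)^k e^{\mathrm{i}u(\Delta X)_j}-\mathsf{E}\bigl[(\mathrm{i}(\Delta X)_1)^k e^{\mathrm{i}u(\Delta X)_1}\bigr],
\]
a sum of i.i.d.\ centred complex variables. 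The crucial variance input is $\mathsf{Var}[\zeta^{(k)}_1(u)]\le\mathsf{E}[|(\Delta X)_1|^{2k}]$, which by Lemma~\ref{Moments} is $\lesssim\Delta$ for $k\ge1$ and trivially $\le1$ for $k=0$; thus the variance scale is $\lesssim\Delta^{k\wedge1}$, matching the factor $\Delta^{(k\wedge1)/2}$ in the claim.

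First I would establish a pointwise deviation bound. Splitting $\zeta^{(k)}_j(u)$ into real and imaginary parts and applying Bernstein's inequality yields, for each fixed $u$, a sub-exponential tail with variance proxy $\lesssim\Delta^{k\wedge1}$. The one subtlety is that $|\zeta^{(k)}_j(u)|\le 2|(\Delta X)_j|^k$ is unbounded, so I would truncate at a slowly growing level $T_n$: using $\mathsf{E}[|\Delta X|^{6+\varsigma}]\lesssim\Delta$ (from Lemma~\ref{Moments} by Lyapunov interpolation between the sixth and eighth moments) together with a union bound, the event $\{\max_{j\le n}|(\Delta X)_j|\le T_n\}$ has probability tending to $1$ as soon as $T_n\gg(n\Delta)^{1/(6+\varsigma)}$, and on this event Bernstein applies with summands bounded by $2T_n^k$. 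Since $T_n$ can be taken a small polynomial in $n$, $\log T_n\lesssim\log n$ is absorbed into the logarithmic scale; this is precisely where the moment assumption \ref{ass1} enters.

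Next I would pass from pointwise to uniform control. Covering $I_h$ by a grid $\mathcal{G}$ of $M$ equispaced points with spacing $\delta_n$, I use the elementary inequality
\[
\sup_{|u|\le h_n^{-1}}\bigl|\mathbb{D}^{(k)}(u)\bigr|\le \max_{u\in\mathcal{G}}\bigl|\mathbb{D}^{(k)}(u)\bigr|+\delta_n\sup_{|u|\le h_n^{-1}}\bigl|\mathbb{D}^{(k+1)}(u)\bigr|.
\]
The first term is controlled by a union bound over the $M$ grid points combined with the pointwise estimate, which produces the logarithmic factor $\log M\lesssim\log h_n^{-1}$ (the two logarithmic scales being comparable since $h_n$ is polynomially linked to $n$). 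The remainder term needs only a crude bound: for $k\le3$ we have $k+1\le4\le 6+\varsigma$, so $\sup_u|\mathbb{D}^{(k+1)}(u)|\le \frac1n\sum_j|(\Delta X)_j|^{k+1}+\mathsf{E}[|(\Delta X)_1|^{k+1}]=\mathcal{O}_{\mathrm{P}}(1)$ by Markov's inequality, and choosing $\delta_n$ a sufficiently small polynomial in $n^{-1}$ makes $\delta_n\sup_u|\mathbb{D}^{(k+1)}(u)|$ negligible relative to the grid term. Assembling the two contributions gives the asserted rate $\mathcal{O}_{\mathrm{P}}(n^{-1/2}\Delta^{(k\wedge1)/2}\log h_n^{-1})$.

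The main obstacle I anticipate is the unboundedness of the summands $|(\Delta X)_j|^k$: obtaining a clean sub-exponential tail forces the truncation step, and one must check that $T_n$ grows slowly enough for $\log T_n$ to be absorbed while still making the maximum-jump event overwhelmingly likely. A secondary point is bookkeeping the two Bernstein regimes so that the sub-Gaussian contribution $\sqrt{\Delta^{k\wedge1}\log M/n}$ dominates the sub-exponential one; since $\sqrt{\log h_n^{-1}}\le\log h_n^{-1}$ as $h_n\to0$, the stated first power of the logarithm then follows, with some room to spare.
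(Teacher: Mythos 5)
Your proposal takes a genuinely different route from the paper's, and in its present form it does not prove the stated bound. The paper's proof is a two-line reduction: it invokes an external weighted uniform bound $\sup_n\mathsf{E}\bigl[\bigl\Vert\sqrt{n}\,\Delta^{-(k\wedge1)/2}\,\mathbb{D}^{(k)}\,\omega\bigr\Vert_{\mathbb{R}}\bigr]<\infty$ for the weight $\omega(u)=(\log(e+|u|))^{-1}$ (the reference given there, to Sato's book, is evidently a mis-citation; the intended result is a Kappus--Rei{\ss}-type weighted bound for empirical characteristic functions), then restricts to $I_{h}$, where $\inf_{|u|\le h_n^{-1}}\omega(u)=1/\log(e+h_n^{-1})$, and finishes with Markov's inequality. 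The factor $\log h_n^{-1}$ thus comes from the weight alone and never refers to $n$. Your Bernstein-plus-grid argument cannot produce this factor: since the target rate is polynomially small in $n$, the grid spacing $\delta_n$ must be polynomially small, so $M\asymp h_n^{-1}/\delta_n$ is polynomially large and the union bound costs $\sqrt{\log M}\asymp\sqrt{\log n}$ (plus a sub-exponential term proportional to $\log M\asymp\log n$), not $\log h_n^{-1}$. The bridge you use --- ``the two logarithmic scales being comparable since $h_n$ is polynomially linked to $n$'' --- is an additional hypothesis that Assumption~\ref{assumption1} does not supply: for $r>1/2$, the choices $h_n=(\log n)^{-\beta}$ and $\Delta=(\log n)^{a}/n$ with $\beta>1/(2r-1)$ and $1+3\beta<a<2\beta(r+1)$ satisfy every part of condition \ref{ass5}, yet $\log h_n^{-1}\asymp\log\log n\ll\sqrt{\log n}$, so in that regime your bound is strictly weaker than the claimed one. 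As written, your argument establishes the lemma only under the extra assumption $h_n\lesssim n^{-c}$ for some $c>0$.

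Two steps of your truncation argument are also flawed. First, $\mathsf{E}\bigl[|\Delta X|^{6+\varsigma}\bigr]\lesssim\Delta$ cannot be obtained ``by Lyapunov interpolation between the sixth and eighth moments'': Lemma~\ref{Moments} requires $\int|x|^{2k}\nu(dx)<\infty$, and Assumption~\ref{assumption1} \ref{ass1} gives moments of $\nu$ only up to order $6+\varsigma\le 7$, so the eighth moment of $\Delta X$ is simply not available; interpolation bounds a moment only by finite moments of lower \emph{and higher} order, so this step is circular. (The bound is in fact true for infinitely divisible laws, but it needs a direct fractional-moment argument.) Second, the ``bookkeeping'' of the two Bernstein regimes genuinely fails at $k=3$ when $\varsigma=0$, a case Assumption~\ref{assumption1} permits: with the maximum-jump truncation $T_n=(n\Delta)^{1/6}\omega_n$, $\omega_n\to\infty$, the sub-exponential contribution to the grid maximum is of order $T_n^{3}\log M/n\asymp\Delta^{1/2}n^{-1/2}\omega_n^{3}\log n$, which exceeds the target $\Delta^{1/2}n^{-1/2}\log h_n^{-1}$ even when $\log h_n^{-1}\asymp\log n$; there is no ``room to spare.'' This particular defect can be repaired by truncating via tail moments, using $\mathsf{E}\bigl[|\Delta X|^{3}\mathrm{1}_{\{|\Delta X|>T_n\}}\bigr]\lesssim\Delta/T_n^{3}$ instead of the event $\{\max_j|(\Delta X)_j|\le T_n\}$, but the repaired bound is again of order $\Delta^{1/2}n^{-1/2}\log n$ and so still requires the polynomial link between $h_n$ and $n$. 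To get the lemma as stated, under Assumption~\ref{assumption1} alone, the weighted-norm reduction used in the paper is the right tool.
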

\begin{proof}
To prove this statement we use Lemma \ref{Moments} together with proof of the Theorems 1 (see \cite{sato1999levy}), which shows that for the weight function $\omega(u)=\left(\log\left(e+\left|u\right|\right)\right)^{-1}$ under Assumption \ref{assumption1}, we obtain 
\begin{equation*}
C_{k}:=\underset{n}{\sup}\,\mathsf{E}\left[\bigl\Vert \sqrt{n}\Delta^{-(k \wedge 1)/2}\mathbb{D}^{(k)}(u)\omega(u)\bigr\Vert_{\mathbb{R}}\right]<\infty
\end{equation*}
for $k=0,1,2,3$.
Furthermore,
\begin{equation*}
\bigl\Vert \sqrt{n}\Delta^{-(k \wedge 1 )/2}\mathbb{D}^{(k)}(u)\omega(u)\bigr\Vert _{\mathbb{R}}\geq \sqrt{n}\Delta^{-(k \wedge 1 )/2}\bigl\Vert \mathbb{D}^{(k)}(u)\bigr\Vert_{I_{h}}\underset{|u|\leq h_n^{-1}}{\inf}\omega(u)
\end{equation*}
Since
\begin{equation*}
\begin{split}
\underset{|u|\leq h_n^{-1}}{\inf}\omega(u)&=\underset{|u|\leq h_n^{-1}}{\inf}\left(\log(e+|u|)\right)^{-1}=\bigl(\underset{|u|\leq h_n^{-1}}{\sup}\log(e+|u|)\bigr)^{-1}\\
&=\log\left(e+h_n^{-1}\right)^{-1}
\end{split}
\end{equation*}
we conclude that 
\begin{equation*}
\mathsf{E}\left[\bigl\Vert \mathbb{D}^{(k)}(u)\bigr\Vert_{I_{h}}\right]\leq\frac{C_{k}\Delta^{(k \wedge 1 )/2}}{\sqrt{n}\underset{|u|\leq h_n^{-1}}{\inf}\omega(u)}\lesssim n^{-1/2}\Delta^{(k \wedge 1 )/2}\log h_n^{-1}.
\end{equation*}
This completes the proof.
\end{proof}
Note that the Lemma~\ref{infPhi} and Lemma~\ref{dPhiest}  imply 
\begin{eqnarray*}
\inf_{|u|\leq h_n^{-1}}\bigl|\widehat{\Phi}_{\Delta X}(u)\bigr|\geq\inf_{|u|\leq h_n^{-1}}\bigl|\Phi_{\Delta X}(u)\bigr|-o_{p}(1)\gtrsim 1-o_{p}(1).
\end{eqnarray*}
\begin{lem}
\label{2term}
Let $R_n(x)$ be the form
\begin{equation*}
\begin{split}
R_n(x)&=-\frac{1}{2\pi\Delta}\int_{\mathbb{R}}e^{-\mathrm{i}u x}\bigl[Q_{0}(u)\mathbb{D}(u)+Q_{1}(u)\mathbb{D}'(u)\bigr.\\
&\bigl.\hspace{2cm}+Q_{2}(u)\mathbb{D}''(u)+Q_{3}(u)\mathbb{D}'''(u)\bigr]\varphi_{W}(uh_n)\, du,
\end{split}
\end{equation*}
where $\mathbb{D}(u):=\widehat{\Phi}_{\Delta X}(u)-\Phi_{\Delta X}(u)$  and
\begin{equation*}
\begin{split}
Q_{0}(u)&=\frac{\Delta}{\Phi_{\Delta X}(u)}\left(\frac{2-\alpha}{2}\left(\Delta(\Psi'(u))^{2}-\Psi''(u)\right)\right.\\
&\left.\hspace{2cm}-u\frac{1-\alpha}{2}\left(\Psi'''(u)-3\Delta\Psi''(u)\Psi'(u)+\Delta^2(\Psi'(u))^3\right)\right),\\
Q_{1}(u)&=\frac{\Delta}{\Phi_{\Delta X}(u)}\left(3u\frac{1-\alpha}{2}\left(\Delta(\Psi'(u))^{2}-\Psi''(u)\right)-(2-\alpha)\Psi'(u)\right),\\
Q_{2}(u)	&=\frac{1}{\Phi_{\Delta X}(u)}\left(\frac{2-\alpha}{2}-3u\Delta\Psi'(u)\frac{1-\alpha}{2}\right),\\
Q_{3}(u)&=\frac{1}{\Phi_{\Delta X}(u)}\left(u\frac{1-\alpha}{2}\right).
\end{split}
\end{equation*}
Then applies
\begin{equation*}
\Vert R_n(x)\Vert_{\mathbb{R}}=\mathcal{O}\left(\Delta^{-1/2} h_n^{-1}n^{-1/2}\log h_n^{-1}\right).
\end{equation*}
\end{lem}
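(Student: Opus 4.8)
The plan is to pass to the modulus under the Fourier integral and then feed in the bounds already established for the multipliers $Q_{m}$ and for the fluctuations $\mathbb{D}^{(m)}$. Since $|e^{-\mathrm{i}ux}|=1$, the supremum over $x\in\mathbb{R}$ can be discarded immediately, so that
\[
\Vert R_{n}\Vert_{\mathbb{R}}\leq\frac{1}{2\pi\Delta}\sum_{m=0}^{3}\int_{\mathbb{R}}|Q_{m}(u)|\,|\mathbb{D}^{(m)}(u)|\,|\varphi_{W}(uh_n)|\,du .
\]
Because $\varphi_{W}$ is supported in $[-1,1]$, the integrand lives on $I_{h}=\{|u|\leq h_n^{-1}\}$ and $\int_{\mathbb{R}}|\varphi_{W}(uh_n)|\,du\lesssim h_n^{-1}$. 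Bounding $|\mathbb{D}^{(m)}(u)|$ by $\Vert\mathbb{D}^{(m)}\Vert_{I_{h}}$ reduces everything to estimating $\Vert Q_{m}\Vert_{I_{h}}$ and inserting the order of $\Vert\mathbb{D}^{(m)}\Vert_{I_{h}}$.

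Next I would control the four multipliers on $I_{h}$. Lemma~\ref{infPhi} gives $1/|\Phi_{\Delta X}(u)|\lesssim 1$ there, while Lemma~\ref{DifPhiPhi} gives $\Vert\Psi'\Vert_{I_{h}}\lesssim h_n^{-1}$ and $\Vert\Psi^{(k)}\Vert_{I_{h}}\lesssim 1$ for $k\geq 2$; together with $|u|\leq h_n^{-1}$ this bounds every monomial in the explicit expressions for $Q_{0},\dots,Q_{3}$. The bookkeeping point is that each term carrying an extra power of $\Delta$ also carries extra factors of $\Psi'$ (hence of $h_n^{-1}$), and the bandwidth coupling $h_n^{3}\gtrsim\Delta$, i.e.\ $\Delta h_n^{-2}\lesssim h_n$, makes all such terms negligible; this yields $\Vert Q_{0}\Vert_{I_{h}},\Vert Q_{1}\Vert_{I_{h}}\lesssim\Delta h_n^{-1}$, $\Vert Q_{2}\Vert_{I_{h}}\lesssim 1$ and $\Vert Q_{3}\Vert_{I_{h}}\lesssim h_n^{-1}$. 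For the fluctuations, Lemma~\ref{dPhiest} gives $\Vert\mathbb{D}\Vert_{I_{h}}\lesssim n^{-1/2}\log h_n^{-1}$ and $\Vert\mathbb{D}^{(m)}\Vert_{I_{h}}\lesssim n^{-1/2}\Delta^{1/2}\log h_n^{-1}$ for $m=1,2,3$. Multiplying by $h_n^{-1}/\Delta$ and comparing the four contributions, the $Q_{2}\mathbb{D}''$ term produces exactly the asserted rate $\Delta^{-1/2}h_n^{-1}n^{-1/2}\log h_n^{-1}$, while the $Q_{0}\mathbb{D}$ and $Q_{1}\mathbb{D}'$ terms carry an additional factor $\Delta$ (resp.\ $\Delta^{3/2}$) and are, under $h_n^{3}\gtrsim\Delta$, of strictly smaller order. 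All of these are routine once the multiplier bounds are in hand.

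The step I expect to be the main obstacle is the highest-derivative term $Q_{3}\mathbb{D}'''$. Since $Q_{3}(u)=u(1-\alpha)/(2\Phi_{\Delta X}(u))$ grows linearly in $u$, the crude estimate $\Vert Q_{3}\Vert_{I_{h}}\lesssim h_n^{-1}$ combined with $\Vert\mathbb{D}'''\Vert_{I_{h}}\lesssim n^{-1/2}\Delta^{1/2}\log h_n^{-1}$ is too lossy, precisely because moving the modulus inside the Fourier integral throws away the oscillation of $e^{-\mathrm{i}ux}$ and replaces the kernel $K_{3,n}$ by its sup‑norm. To sharpen this term I would abandon the crude bound and return to the i.i.d.\ representation~\eqref{Rn}, estimating the $m=3$ contribution through the variance of $(\Delta X)^{3}K_{3,n}(x-(\Delta X))$, where the governing quantity is $\Vert K_{3,n}\Vert_{2}$ (i.e.\ the variance proxy $s^{2}$ in~\eqref{VarRn}) rather than $\Vert K_{3,n}\Vert_{\infty}$, and the concentration of the law of $\Delta X$ near $x$ (with density of order $\Delta$) is exploited. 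This quantitative gain, together with a maximal inequality to pass from fixed $x$ to the supremum over $\mathbb{R}$, is the delicate point; once the top-order term is controlled this way, collecting the estimates of the previous paragraph delivers the claimed bound.
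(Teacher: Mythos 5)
Your first two paragraphs are, modulo bookkeeping, the paper's own proof: the same three inputs (Lemma~\ref{infPhi} for \(1/|\Phi_{\Delta X}|\), Lemma~\ref{DifPhiPhi} for the derivatives of \(\Psi\), Lemma~\ref{dPhiest} for \(\mathbb{D}^{(m)}\)) are fed into the bound obtained by taking absolute values under the Fourier integral. The one place you differ is that you correctly keep the Lebesgue factor \(\int_{\mathbb{R}}|\varphi_{W}(uh_n)|\,du\asymp h_n^{-1}\), which the paper's proof silently drops: there each \(\Vert I_m\Vert_{\mathbb{R}}\) is bounded by \(\Vert Q_m\Vert_{I_{h}}\Vert\mathbb{D}^{(m)}\Vert_{I_{h}}\) with no factor for the length of the integration domain (for instance the paper states \(\Vert I_3\Vert_{\mathbb{R}}\lesssim\Vert\mathbb{D}'''\Vert_{I_{h}}h_n^{-1}\), where your accounting produces an additional \(h_n^{-1}\)). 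Your conclusion that, with honest accounting, the terms \(m=0,1,2\) stay within the asserted rate while the \(Q_3\mathbb{D}'''\) term comes out as \(\Delta^{-1/2}h_n^{-2}n^{-1/2}\log h_n^{-1}\) is accurate; so your diagnosis in fact identifies a gap in the paper's own argument, not merely an obstacle of your chosen route.

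The genuine gap in your proposal is the third paragraph. First, the repair of the \(m=3\) term is only sketched, not carried out. Second, and more seriously, the sketch cannot succeed, because the variance proxy it relies on is exactly what the paper already computes, and it is too large: by \eqref{BQ3}, Lemma~\ref{InfPDelta} and \eqref{InfPMoments} one has \(\mathsf{E}\bigl[(\Delta X)_{1}^{6}K_{3,n}^{2}(x-(\Delta X)_{1})\bigr]\asymp\Delta\int_{\mathbb{R}}K_{3,n}^{2}(y)\,dy\asymp\Delta h_n^{-3}\) for \(x\in I\), while by \eqref{SupyK3} the squared mean \(\lesssim\Delta^{2}h_n^{-2}\) is negligible; hence the \(m=3\) part of \eqref{Rn} has pointwise standard deviation of order \(\tfrac{1}{\sqrt{n}\Delta}(\Delta h_n^{-3})^{1/2}=\Delta^{-1/2}h_n^{-3/2}n^{-1/2}\) at every fixed \(x\in I\), before any supremum is taken. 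This already exceeds the claimed rate \(\Delta^{-1/2}h_n^{-1}n^{-1/2}\log h_n^{-1}\) by the diverging factor \(h_n^{-1/2}/\log h_n^{-1}\); a maximal inequality over \(x\) can only add logarithmic terms and yields \(\mathcal{O}_{P}\bigl((n\Delta h_n^{3})^{-1/2}\sqrt{\log n}\bigr)\), which is the band-width rate of Theorem~\ref{theorem2}, not the rate of the lemma. Put differently, Lemma~\ref{lemvar} gives \(\inf_{x\in I}s^{2}(x)\gtrsim\Delta h_n^{-3}\) and, at fixed \(x\), \(T_{n}(x)=\sqrt{n}\Delta R_{n}(x)/s(x)\) is asymptotically standard normal, so \(\Vert R_{n}\Vert_{\mathbb{R}}\gtrsim s(x)/(\sqrt{n}\Delta)\gtrsim\Delta^{-1/2}h_n^{-3/2}n^{-1/2}\) with probability bounded away from zero. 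Your closing claim that the variance route ``delivers the claimed bound'' is therefore false: no argument can, since the stated rate is incompatible with the paper's own variance bounds (the paper's proof reaches it only through the dropped \(h_n^{-1}\)). The defensible statement your bookkeeping leads to is \(\Vert R_{n}\Vert_{I}=\mathcal{O}_{P}\bigl(\Delta^{-1/2}h_n^{-3/2}n^{-1/2}\sqrt{\log n}\bigr)\), and downstream uses of the lemma (Assumption~\ref{assumption1}\,\ref{ass6} and the dominance discussion) would need to be adjusted accordingly.
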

\begin{proof}
Let us first consider the integral $\,I_0(x):=\int_{\mathbb{R}}e^{-\mathrm{i}u x} Q_{0}(u)\mathbb{D}(u)\varphi_{W}(uh_n)\, du$. 
\begin{equation*}
\begin{split}
&I_0(x)=\int_{\mathbb{R}}e^{-\mathrm{i}u x} \mathbb{D}(u) \frac{\Delta}{\Phi_{\Delta X}(u)}\left(\frac{2-\alpha}{2}\left(\Delta(\Psi'(u))^{2}-\Psi''(u)\right)\right.\\
&\left.\hspace{2cm}-u\frac{1-\alpha}{2}\left(\Psi'''(u)-3\Delta\Psi''(u)\Psi'(u)+\Delta^2(\Psi'(u))^3\right)\right)\varphi_{W}(uh_n)\, du
\end{split}
\end{equation*}
According to the lemmas \ref{dPhiest}, \ref{DifPhiPhi} and \ref{infPhi} we get
\begin{equation*}
\begin{split}
\Vert I_0(x)\Vert_{\mathbb{R}}&\lesssim \Delta \left\Vert \mathbb{D}(u)\right\Vert _{I_{h}}\left(\Delta h_n^{-2}+1+h_n^{-1}(1+\Delta h_n^{-1}+\Delta^2 h_n^{-3})\right) \\
&\lesssim \Delta h_n^{-1}n^{-1/2} \log h_n^{-1}.
\end{split}
\end{equation*}
Analogous to $\Vert I_0(x)\Vert_{\mathbb{R}}$ applies to the integrals $I_i(x):=\int_{\mathbb{R}}e^{-\mathrm{i}u x} Q_{i}(u)\mathbb{D}^{\left(i\right)}(u)\varphi_{W}(uh_n)\, du$ for $i=1,2,3$:
\begin{equation*}
\begin{split}
\Vert I_1(x)\Vert_{\mathbb{R}}&\lesssim \Delta \left\Vert \mathbb{D}'(u)\right\Vert _{I_{h}}\left( h_n^{-1}(\Delta h_n^{-2}+1)+h_n^{-1}\right) \\
&\lesssim \Delta^{3/2} h_n^{-1}n^{-1/2}\log h_n^{-1},\\
\Vert I_2(x)\Vert_{\mathbb{R}}&\lesssim  \left\Vert \mathbb{D}''(u)\right\Vert _{I_{h}}\left(1+ \Delta h_n^{-2}\right) \\
&\lesssim \Delta^{1/2} n^{-1/2}\log h_n^{-1},\\
\Vert I_3(x)\Vert_{\mathbb{R}}&\lesssim  \left\Vert \mathbb{D}'''(u)\right\Vert _{I_{h}}h_n^{-1}\\
&\lesssim \Delta^{1/2} h_n^{-1}n^{-1/2}\log h_n^{-1}.
\end{split}
\end{equation*}
Thus the claim of the lemma holds such that
\begin{equation*}
\Vert R_n(x)\Vert_{\mathbb{R}}=\mathcal{O}\left(\Delta^{-1/2} h_n^{-1}n^{-1/2}\log h_n^{-1}\right).
\end{equation*}
\end{proof}
\begin{lem}
\label{InfPDelta}

Let $P_{\Delta}$ denote the distribution of the r.v. $X_{\Delta}-X_0$.  The measures $y^{2m}P_{\Delta}(dy)$  for $m=1,2,3$ have Lebesgue densities $g_{\Delta}^{2m}$.  For any compact set in $\mathbb{R}\setminus\{ 0\}$ and any $\varepsilon_{0}>0$, let $I^{\varepsilon_{0}}=\{ x\in\mathbb{R}:\,\,d(x,I)\leq\varepsilon_{0}\}$, where $d(x,I)=\inf_{y\in I}|x-y|$. 

We have
\begin{enumerate}[label=(\roman*)] 
\item
\begin{equation*}
\inf_{y\in I^{\varepsilon_{0}}}g_{\Delta}^{2m}(y)=\inf_{y\in I^{\varepsilon_{0}}}\left(y^{2m}P_{\Delta}\right)(y)\gtrsim\Delta,\,\text{ for }, \, m=1,2,3 \quad \Delta\to 0,
\end{equation*}
\item
\begin{equation*}
\Vert g_{\Delta}^{2}\Vert_{\mathbb{R}}\lesssim \Delta^{1/2}\, \,\mathrm{and}\,\,
\Vert g_{\Delta}^{2m}\Vert_{\mathbb{R}}\lesssim \Delta,\quad m=2,3 \quad \Delta\to 0,
\end{equation*}
\end{enumerate}
for some sufficiently small $\varepsilon_{0}>0$ such that $0\notin I^{\varepsilon_{0}}$. 
 \label{ass2}
\end{lem}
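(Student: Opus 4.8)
The plan is to read $P_\Delta$ as the one–step increment law of the limiting L\'evy process $X$ with characteristic exponent $\Psi=\mathcal L_\alpha\psi$, and to exploit the short-time (L\'evy--It\^o) structure of $P_\Delta$ rather than Fourier inversion. First I would identify the L\'evy triplet $(\gamma_X,\sigma_X^2,\nu_X)$ of $X$ by inserting the L\'evy--Khintchine form of $\psi$ into $\Psi(u)=\tfrac{2}{1-\alpha}\int_0^1\psi(uy)\,y^{(2\alpha-1)/(1-\alpha)}\,dy$ and changing variables $w=uy$. This shows that $\nu_X$ has a Lebesgue density, inherited from that of $\nu$, which is continuous and strictly positive on a neighbourhood of $I$ and which, by Assumption~\ref{assumption1}\,\ref{ass1} together with the boundedness of $\mathcal L_\alpha$ and the compact support of $\mathcal K_\alpha$, satisfies $\sup_{|x|\ge\eta}|x|^{2m}\nu_X(x)<\infty$ for $m=1,2,3$ and every $\eta>0$. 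In particular $\inf_{y\in I^{\varepsilon_0}}y^{2m}\nu_X(y)>0$ once $\varepsilon_0$ is small enough that $0\notin I^{\varepsilon_0}$.

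The core device is a truncation at level $\eta<d(0,I^{\varepsilon_0})$. Writing $X_\Delta-X_0=Y_\Delta+C_\Delta$, where $C$ is the compound Poisson process of jumps larger than $\eta$ (intensity $\lambda_\eta:=\nu_X(|x|>\eta)$, jump law $F_\eta:=\lambda_\eta^{-1}\nu_X\mathbf 1_{\{|x|>\eta\}}$) and $Y$ carries the drift, the Gaussian part and the jumps bounded by $\eta$, independence gives
\[
P_\Delta=e^{-\lambda_\eta\Delta}\sum_{k\ge0}\frac{(\lambda_\eta\Delta)^k}{k!}\,F_\eta^{*k}*P^Y_\Delta .
\]
Since $F_\eta$ has density $\lambda_\eta^{-1}\nu_X$, every $k\ge1$ term is absolutely continuous, which yields the existence of $g_\Delta^{2m}(y)=y^{2m}p_\Delta(y)$ on $I^{\varepsilon_0}$. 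I would then record two building blocks: $\|p^Y_\Delta\|_{\mathbb R}\lesssim\Delta^{-1/2}$ and a Bernstein-type bound $\mathrm P(|Y_\Delta-\mathsf E Y_\Delta|>t)\lesssim e^{-ct^2/\Delta}$ for $t\lesssim 1$, valid because the jumps of $Y$ are bounded by $\eta$; together they say that the mass of $Y_\Delta$ sits within $O(\sqrt\Delta)$ of the origin.

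For the lower bound (i) I would keep only $k=1$: on $I^{\varepsilon_0}$ the $k=0$ term is negligible by the concentration bound, the $k\ge2$ terms are $O(\Delta^2)$, and since $P^Y_\Delta\Rightarrow\delta_0$ the convolution $F_\eta*P^Y_\Delta$ converges uniformly on $I^{\varepsilon_0}$ to $F_\eta$. Hence $p_\Delta(y)=\lambda_\eta\Delta\,(f_\eta*p^Y_\Delta)(y)+O(\Delta^2)=\Delta\,\nu_X(y)(1+o(1))$ uniformly in $y\in I^{\varepsilon_0}$, so $g_\Delta^{2m}(y)=y^{2m}p_\Delta(y)\gtrsim\Delta\,\inf_{I^{\varepsilon_0}}y^{2m}\nu_X(y)\gtrsim\Delta$.

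For the upper bound (ii) I would split $\sup_y y^{2m}p_\Delta(y)$ along the same expansion. The $k\ge1$ part is bounded by $\lambda_\eta\Delta\,\sup_{|y|\ge\eta/2}y^{2m}\nu_X(y)+O(\Delta^2)\lesssim\Delta$, using the finiteness from the first step and that $F_\eta*P^Y_\Delta$ stays essentially supported on $\{|y|\ge\eta/2\}$. The delicate contribution is the low-order term $e^{-\lambda_\eta\Delta}y^{2m}p^Y_\Delta(y)$: combining $\|p^Y_\Delta\|_{\mathbb R}\lesssim\Delta^{-1/2}$ with the sub-Gaussian tail, the weight $y^{2m}$ is effective at scale $\sqrt\Delta$ and gives $\sup_y y^{2m}p^Y_\Delta(y)\lesssim(\sqrt\Delta)^{2m}\Delta^{-1/2}=\Delta^{m-1/2}$. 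Summing yields $\Delta^{1/2}+\Delta\lesssim\Delta^{1/2}$ for $m=1$ and $\Delta^{m-1/2}+\Delta\lesssim\Delta$ for $m=2,3$, precisely the claimed rates. The main obstacle is exactly this near-origin analysis of the diffusive/small-jump term: one must confine the genuine peak of $p^Y_\Delta$, of height $\Delta^{-1/2}$, to the $O(\sqrt\Delta)$-neighbourhood where $y^{2m}$ suppresses it, and make the remainders uniform in $y$; this is what produces the distinct exponents for $m=1$ and $m\ge2$. A secondary point is the global absolute continuity of $P_\Delta$ when $\sigma=0$, which I would handle via the sub-Gaussian concentration of the bounded-jump part $Y$, or by restricting the density statement to the region $I^{\varepsilon_0}$ used in the sequel.
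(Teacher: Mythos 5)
Your proposal takes a genuinely different route from the paper. The paper never splits $X$ into small-jump and large-jump parts: it differentiates $\Phi_{\Delta X}$, uses infinite divisibility in the form $\Phi_{\Delta X}=(\Phi_{\Delta/2})^{2}$, and derives the exact moment-measure identities \eqref{Faltung}, e.g.
\begin{equation*}
x^{2}P_{\Delta}=\tfrac{2}{1-\alpha}\Delta\Bigl(t^{2}\int_{0}^{1}y^{\frac{3\alpha-2}{1-\alpha}}\nu_{\sigma}(t/y)\,dy\Bigr)\ast P_{\Delta}+4(xP_{\Delta/2})\ast(xP_{\Delta/2}),
\end{equation*}
from which (i) follows by Markov's inequality ($P_{\Delta}([-\varepsilon,\varepsilon])=1-\mathcal{O}(\Delta)$) and (ii) by iterating sup-norm/$L^{1}$ bounds, using $\Vert yP_{\Delta/2}\Vert_{L^{1}}\lesssim\Delta^{1/2}$ from Lemma~\ref{Moments}. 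Your part (i), via the $k=1$ term of the compound-Poisson expansion, is sound and is really the same mechanism in different clothing — your $\lambda_{\eta}\Delta\,f_{\eta}\ast P^{Y}_{\Delta}$ plays the role of the paper's $\Delta(\cdots\nu_{\sigma}\cdots)\ast P_{\Delta}$ term, and both arguments rely on the same unstated positivity and continuity of the L\'evy density of $X$ on a neighbourhood of $I$.

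The genuine gap is in part (ii). Your two building blocks for the $k=0$ term, namely $\Vert p^{Y}_{\Delta}\Vert_{\mathbb{R}}\lesssim\Delta^{-1/2}$ together with a Gaussian-type localisation of the peak to an $O(\sqrt{\Delta})$ neighbourhood, hold only when $\sigma>0$. Nothing in Assumption~\ref{assumption1} forces a Brownian component (condition \ref{ass1} restricts only the tails of $\nu$), and the paper's simulation study explicitly takes $\sigma=0$. If $\sigma=0$ and the small-jump part has finite activity, $P^{Y}_{\Delta}$ carries an atom at the drift point and admits no bounded density at all; if it has infinite activity with Blumenthal--Getoor index $\beta\in(0,2)$, the peak height is of order $\Delta^{-1/\beta}\gg\Delta^{-1/2}$, so the stated bound fails (the final rate $\Delta^{m-1/2}$ may well survive because the peak width also shrinks to order $\Delta^{1/\beta}$, but that requires a separate argument, case by case in the activity of $\nu$ near the origin, which you have not given). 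Your fallback of ``restricting the density statement to $I^{\varepsilon_{0}}$'' does not repair this: (ii) is a supremum over all of $\mathbb{R}$, and it is used in exactly that global form in the proof of Theorem~\ref{thm1}, where $\mathsf{E}\bigl[(\Delta X)_{1}^{2m}K_{m,n}^{2}(x-(\Delta X)_{1})\bigr]\le\Vert g_{\Delta}^{2m}\Vert_{\mathbb{R}}\int K_{m,n}^{2}(y)\,dy$; the near-origin region you would excise is precisely where $g^{2}_{\Delta}$ attains the order $\Delta^{1/2}$. So what you defer as a ``secondary point'' is in fact the crux of (ii). By contrast, the paper's identities never isolate the small-jump part, pushing everything onto moment measures of the full law $P_{\Delta/2}$ — although, to be fair, its own chain for $\Vert y^{2}P_{\Delta}\Vert_{\mathbb{R}}$ silently invokes $\Vert yP_{\Delta/2}\Vert_{\mathbb{R}}\lesssim1$, which is the same Gaussian-peak heuristic in disguise.
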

\begin{proof}  
Note that
\begin{equation*}
\begin{split}
&\psi'(uy)=-u\sigma^{2}y^{2}+\mathrm{i}y\bigl(\gamma+\int_{\mathbb{R}}x\bigl(e^{\mathrm{i}uyx}-\mathbb{I}_{[-1,1]}(x)\bigr)\nu(dx)\bigr)\\
&\psi''(uy) =-\sigma^{2}y^{2}-\int_{\mathbb{R}}e^{\mathrm{i}uyx}(yx)^{2}\nu(x)\,dx\\
 &\hspace{1.1cm} =-\int_{\mathbb{R}}y^{-1}e^{\mathrm{i}ut}t^{2}\nu_{\sigma}(t/y)\,dt\\
&\psi^{(k)}(uy)  =\int_{\mathbb{R}}e^{\mathrm{i}uyx}(\mathrm{i}yx)^{k}\nu(x)dx=\int_{\mathbb{R}}y^{-1}e^{\mathrm{i}ut}t^{k}\nu(t/y)\,dt,
\end{split}
\end{equation*}
where $\nu_{\sigma}=\sigma^{2}y^{2}\delta_{0}+t^{2}\nu\ensuremath{,\,}t=yx.$ It also follows that
\begin{equation*}
\begin{split}
&\Psi'(u)= \frac{2}{1-\alpha}\int_{0}^{1}\bigl(-u\sigma^{2}y^{2}+\mathrm{i}y\bigl(\gamma+\int_{\mathbb{R}}x\bigl(e^{\mathrm{i}uyx}-\mathbb{I}_{[-1,1]}(x)\bigr)\nu(dx)\bigr)y^{\frac{2\alpha-1}{1-\alpha}}\bigr)\,dy\\
 &\hspace{0.9cm} =\frac{2}{1-\alpha}\int_{0}^{1}\biggl(\int_{\mathbb{R}}e^{\mathrm{i}ut}\bigl(-\sigma^{2}\delta_{1}uy^{2}+\delta_{2}\mathrm{i}\gamma y+t\nu(t/y)\bigr)\,dt\biggr)y^{\frac{3\alpha-2}{1-\alpha}}\,dy\\
 & \hspace{0.9cm}=\frac{2}{1-\alpha}\int_{\mathbb{R}}e^{\mathrm{i}ut}\biggl(\int_{0}^{1}\bigl(-\sigma^{2}\delta_{1}uy^{2}+\delta_{2}\mathrm{i}\gamma y+t\nu(t/y)\bigr)y^{\frac{3\alpha-2}{1-\alpha}}\,dy\biggr)\,dt,\\
&\Psi''(u)=\frac{2}{1-\alpha}\int_{0}^{1}\psi''(uy)y^{\frac{2\alpha-1}{1-\alpha}}\,dy=\frac{2}{1-\alpha}\int_{0}^{1}\biggl(\int_{\mathbb{R}}e^{\mathrm{i}ut}t^{2}\nu_{\sigma}(t/y)\,dt\biggr)y^{\frac{3\alpha-2}{1-\alpha}}\,dy\\
 &\hspace{0.9cm} =\frac{2}{1-\alpha}\int_{\mathbb{R}}e^{\mathrm{i}ut}t^{2}\int_{0}^{1}y^{\frac{3\alpha-2}{1-\alpha}}\nu_{\sigma}(t/y)\,dy\,dt,\\
& \Psi^{(k)}(u)  =\frac{2}{1-\alpha}\int_{0}^{1}\psi^{(k)}(uy)y^{\frac{2\alpha-1}{1-\alpha}}\,dy=\frac{2}{1-\alpha}\int_{\mathbb{R}}e^{\mathrm{i}ut}t^{k}\int_{0}^{1}y^{\frac{3\alpha-2}{1-\alpha}}\nu(t/y)\,dy\,dt.
\end{split}
\end{equation*}
From infinite divisibility of the process $(\Delta X)_t$ it follows that $\Phi_{\Delta X}(u)=\bigl(\Phi_{\Delta/2}(u)\bigr)^{2}$. Then 
\begin{align*}
\Phi''_{\Delta X}(u) & =\Phi_{\Delta X}(u)\bigl(\Delta\Psi''(u)+(\Delta\Psi'(u))^{2}\bigr)\\
 & =\Delta\Psi''(u)\Phi_{\Delta X}(u)+4\bigl(\Delta/2\Psi'(u)\Phi_{\Delta/2}(u)\bigr)^{2}\\
 & =\Delta\Psi''(u)\Phi_{\Delta X}(u)+4\bigl(\Phi'_{\Delta/2}(u)\bigr)^{2}.
\end{align*}
Furthermore
\begin{equation*}
\varint_{\mathbb{R}}e^{\mathrm{i}ux}x^{2}P_{\Delta}(dx)  =\frac{2}{1-\alpha}\Delta\int_{\mathbb{R}}e^{\mathrm{i}ut}t^{2}\int_{0}^{1}y^{\frac{3\alpha-2}{1-\alpha}}\nu_{\sigma}(t/y)\,dy\,dt\varint_{\mathbb{R}}e^{\mathrm{i}ut}P_{\Delta}(dt)+4\bigl(\varint_{\mathbb{R}}e^{\mathrm{i}ut}tP_{\Delta/2}(dt)\bigr)^{2}.
\end{equation*}
Hence
\begin{equation}
\label{Faltung}
\begin{split}
x^{2}P_{\Delta} & =\frac{2}{1-\alpha}\Delta\biggl(t^{2}\int_{0}^{1}y^{^{\frac{3\alpha-2}{1-\alpha}}}\nu_{\sigma}(t/y)\,dy\biggr)\ast P_{\Delta}+4(xP_{\Delta/2})\ast(xP_{\Delta/2}),\\
x^{3}P_{\Delta} & =\frac{2}{1-\alpha}\Delta\biggl(\biggl(t^{3}\int_{0}^{1}y^{^{\frac{3\alpha-2}{1-\alpha}}}\nu(t/y)\,dy\biggr)\ast P_{\Delta}+\biggr.\\
&\biggl.+\biggl(t^{2}\int_{0}^{1}y^{^{\frac{3\alpha-2}{1-\alpha}}}\nu_{\sigma}(t/y)\,dy\biggr)\ast(xP_{\Delta})\biggr)+8(xP_{\Delta/2})\ast(x^{2}P_{\Delta/2}),\\
x^{4}P_{\Delta} & =\frac{2}{1-\alpha}\Delta\biggl(\biggl(t^{4}\int_{0}^{1}y^{\frac{3\alpha-2}{1-\alpha}}\nu(t/y)\,dy\biggr)\ast P_{\Delta}\biggr.\\
&\biggl.+2\biggl(t^{3}\int_{0}^{1}y^{\frac{3\alpha-2}{1-\alpha}}\nu(t/y)\,dy\biggr)\ast(xP_{\Delta})+\biggl(t^{2}\int_{0}^{1}y^{\frac{3\alpha-2}{1-\alpha}}\nu_{\sigma}(t/y)\,dy\biggr)\ast(x^{2}P_{\Delta})\biggr)\\
&+8(x^{2}P_{\Delta/2})\ast(x^{2}P_{\Delta/2})+8(x^{3}P_{\Delta/2})\ast(xP_{\Delta/2}),\\
x^{6}P_{\Delta} & =\frac{2}{1-\alpha}\Delta\biggl(\biggl(t^{6}\int_{0}^{1}y^{\frac{3\alpha-2}{1-\alpha}}\nu(t/y)\,dy\biggr)\ast P_{\Delta}+4\biggl(t^{5}\int_{0}^{1}y^{\frac{3\alpha-2}{1-\alpha}}\nu(t/y)\,dy\biggr)\ast(xP_{\Delta})\biggr.\\
 & \biggl.+6\biggl(t^{4}\int_{0}^{1}y^{\frac{3\alpha-2}{1-\alpha}}\nu(t/y)\,dy\biggr)\ast(x^{2}P_{\Delta})+4\biggl(t^{3}\int_{0}^{1}y^{\frac{3\alpha-2}{1-\alpha}}\nu(t/y)\,dy\biggr)\ast(x^{3}P_{\Delta})\biggr.\\
 & \biggl.+\biggl(t^{2}\int_{0}^{1}y^{\frac{3\alpha-2}{1-\alpha}}\nu_{\sigma}(t/y)\,dy\biggr)\ast(x^{4}P_{\Delta})\biggr)+32(x^{4}P_{\Delta/2})\ast(x^{2}P_{\Delta/2})\\
 & +24(x^{3}P_{\Delta/2})\ast(x^{3}P_{\Delta/2})+8(x^{5}P_{\Delta/2})\ast(xP_{\Delta/2}).
\end{split}
\end{equation}

 For any $\varepsilon>0,$ it holds due to the Markov inequality
\begin{equation*}
P_{\Delta}\bigl([-\varepsilon,\varepsilon]^{c}\bigr)=P\bigl(|\Delta X|>\varepsilon\bigr)\leq\varepsilon^{-2}\mathsf{E}\bigl[\Delta X^{2}\bigr]\lesssim\Delta.
\end{equation*}
Then it follows $P_{\Delta}\bigl([-\varepsilon,\varepsilon]\bigr)=1-\mathcal{O}(\Delta)$ and since $\underset{t\in I^{\varepsilon_{1}}}{\inf}\left(t^{k}\int_{0}^{1}y^{\frac{3\alpha-2}{1-\alpha}}\nu(t/y)\,dy\right) \gtrsim1$ we have
\begin{equation*}
\begin{split}
&\underset{t\in I^{\varepsilon_{1}}}{\inf}\left(\left(t^{k}\int_{0}^{1}y^{\frac{3\alpha-2}{1-\alpha}}\nu(t/y)\,dy\right)\ast P_{\Delta}\right) \\
&\hspace{2cm} \geq P_{\Delta}\bigl[-\varepsilon_{1}/2,\varepsilon_{1}/2\bigr]\underset{t\in I^{\varepsilon_{1}}}{\inf}\left(t^{k}\int_{0}^{1}y^{\frac{3\alpha-2}{1-\alpha}}\nu(t/y)\,dy\right) \gtrsim1.
\end{split} 
\end{equation*} 
Then the first claim follows from \eqref{Faltung},
\begin{equation*}
\inf_{y\in I^{\varepsilon_{0}}}g_{\Delta}^{2m}(y)=\inf_{y\in I^{\varepsilon_{0}}}\bigl(y^{2m}P_{\Delta}\bigr)(y)\gtrsim\Delta,\quad m=1,2,3.
\end{equation*} 
\item
Furthermore due   \eqref{Faltung} and  
\begin{equation*}
\bigl\Vert y P_{\Delta/2}\bigr\Vert_{L^1}=\mathsf{E}\bigl[X_{\Delta/2}\bigr]\leq\sqrt{\mathsf{E}\bigl[X^2_{\Delta/2}\bigr]}\lesssim\Delta^{1/2},
\end{equation*}
we derive
\begin{equation*}
\begin{split}
\bigl\Vert y^2 P_{\Delta}\bigr\Vert_{\mathbb{R}}&\lesssim\Delta\bigl\Vert y^2\nu\bigr\Vert_{\mathbb{R}}P_{\Delta}(\mathbb{R
})+\bigl\Vert y P_{\Delta/2}\bigr\Vert_{\mathbb{R}}\bigl\Vert y P_{\Delta/2}\bigr\Vert_{L^1}\lesssim\Delta+\Delta^{1/2}\lesssim\Delta^{1/2},\\
\bigl\Vert y^3 P_{\Delta}\bigr\Vert_{\mathbb{R}}&\lesssim\Delta\bigl\{ \bigl\Vert y^3\nu\bigr\Vert_{\mathbb{R}}P_{\Delta}(\mathbb{R
})+\bigl\Vert y P_{\Delta}\bigr\Vert_{\mathbb{R}}\nu_{\sigma}(\mathbb{R
})\bigr\} \\
&\hspace{3.5cm}+\bigl\Vert y P_{\Delta/2}\bigr\Vert_{\mathbb{R}}\bigl\Vert y^2 P_{\Delta/2}\bigr\Vert_{L^1}\lesssim\Delta,\\
\bigl\Vert y^4 P_{\Delta}\bigr\Vert_{\mathbb{R}}&\lesssim\Delta\bigl\{ \bigl\Vert y^4\nu\bigr\Vert_{\mathbb{R}}P_{\Delta}(\mathbb{R
})+\bigl\Vert y^3\nu\bigr\Vert_{\mathbb{R}}\bigl\Vert y P_{\Delta}\bigr\Vert_{L^1}+\Vert y^2 P_{\Delta}\Vert_{\mathbb{R}}\nu_{\sigma}\left(\mathbb{R
}\right)\bigr\} \\
&+\bigl\Vert y^3 P_{\Delta/2}\bigr\Vert_{\mathbb{R}}\bigl\Vert y P_{\Delta/2}\bigr\Vert_{L^1}+\bigl\Vert y^2 P_{\Delta/2}\bigr\Vert_{\mathbb{R}}\bigl\Vert y^2 P_{\Delta/2}\bigr\Vert_{L^1}\lesssim\Delta,\\
\bigl\Vert y^5 P_{\Delta}\bigr\Vert_{\mathbb{R}}&\lesssim\Delta\bigl\{ \bigl\Vert y^5\nu\bigr\Vert_{\mathbb{R}}P_{\Delta}(\mathbb{R
})+\bigl\Vert y^4\nu\bigr\Vert_{\mathbb{R}}\bigl\Vert y P_{\Delta}\bigr\Vert_{L^1}+\bigl\Vert y^3\nu\bigr\Vert_{\mathbb{R}}\bigl\Vert y^2 P_{\Delta}\bigr\Vert_{L^1}\bigr.\\
&\bigl.\,+\bigl\Vert y^3 P_{\Delta}\bigr\Vert_{\mathbb{R}}\nu_{\sigma}(\mathbb{R
})\bigr\}+\bigl\Vert y^4 P_{\Delta/2}\bigr\Vert_{\mathbb{R}}\bigl\Vert y P_{\Delta/2}\bigr\Vert_{L^1}\\
&+\bigl\Vert y^3 P_{\Delta/2}\bigr\Vert_{\mathbb{R}}\bigl\Vert y^2 P_{\Delta/2}\bigr\Vert_{L^1}+\bigl\Vert y^2 P_{\Delta/2}\bigr\Vert_{\mathbb{R}}\bigl\Vert y^3 P_{\Delta/2}\bigr\Vert_{L^1}\lesssim\Delta,\\
\bigl\Vert y^6 P_{\Delta}\bigr\Vert_{\mathbb{R}}&\lesssim\Delta\bigl\{ \bigl\Vert y^6\nu\bigr\Vert_{\mathbb{R}}P_{\Delta}(\mathbb{R
})+\bigl\Vert y^5\nu\bigr\Vert_{\mathbb{R}}\bigl\Vert y P_{\Delta}\bigr\Vert_{L^1}+\bigl\Vert y^4\nu\bigr\Vert_{\mathbb{R}}\bigl\Vert y^2 P_{\Delta}\bigr\Vert_{L^1}\bigr.\\
&\bigl.\,+\bigl\Vert y^4 P_{\Delta}\bigr\Vert_{\mathbb{R}}\nu_{\sigma}(\mathbb{R
})+\bigl\Vert y^3\nu\bigr\Vert_{\mathbb{R}}\bigl\Vert y^3 P_{\Delta}\bigr\Vert_{L^1}\bigr\}+\bigl\Vert y^5 P_{\Delta/2}\bigr\Vert_{\mathbb{R}}\bigl\Vert y P_{\Delta/2}\bigr\Vert_{L^1}\\
&+\bigl\Vert y^4 P_{\Delta/2}\bigr\Vert_{\mathbb{R}}\bigl\Vert y^2 P_{\Delta/2}\bigr\Vert_{L^1}+\bigl\Vert y^3 P_{\Delta/2}\bigr\Vert_{\mathbb{R}}\bigl\Vert y^3 P_{\Delta/2}\bigr\Vert_{L^1}\\
&\hspace{2cm}+\bigl\Vert y^2 P_{\Delta/2}\bigr\Vert_{\mathbb{R}}\bigl\Vert y^4 P_{\Delta/2}\bigr\Vert_{L^1}\lesssim\Delta.
\end{split}
\end{equation*}
This completes the proof.
\end{proof}
\begin{lem}
For the variance $s^{2}(x)$ defined by \eqref{VarRn}, we have the following estimate:
\begin{eqnarray*} 
\inf_{x\in I}s^{2}(x)\gtrsim\Delta h_n^{-3},
\end{eqnarray*} 
  for sufficiently large $n$.
\label{lemvar}
\end{lem}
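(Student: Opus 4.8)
The plan is to bound $s^2(x)$ from below by the variance of its imaginary-part component, which carries the highest-order kernel $K_{3,n}$. Decomposing the summand in \eqref{VarRn} into real and imaginary parts exactly as in Theorem~\ref{thm1}, and writing $\mathsf{I}_\Delta[f]=\mathsf{E}[f(\Delta X)]$, one has
\[
s^2(x)=\mathsf{Var}\bigl[B_1(x,\Delta X)\bigr]+\mathsf{Var}\bigl[B_2(x,\Delta X)\bigr]\ge \mathsf{Var}\bigl[B_2(x,\Delta X)\bigr],
\]
with $B_2(x,\upsilon)=\upsilon K_{1,n}(x-\upsilon)-\upsilon^{3}K_{3,n}(x-\upsilon)$. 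Since $\mathsf{Var}[B_2]=\mathsf{E}[B_2(x,\Delta X)^2]-\mathsf{E}[B_2(x,\Delta X)]^2$, the proof reduces to showing that the second moment is of order $\Delta h_n^{-3}$ while the squared mean and all cross and lower-order contributions are of strictly smaller order, uniformly in $x\in I$.

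For the leading second-moment term I would isolate $\mathsf{E}[(\Delta X)^6 K_{3,n}(x-\Delta X)^2]=\int_{\mathbb{R}} K_{3,n}(x-y)^2\,g_\Delta^{6}(y)\,dy$, where $g_\Delta^{6}$ is the density of $y^{6}P_\Delta(dy)$ from Lemma~\ref{InfPDelta}. Restricting to $y\in I^{\varepsilon_0}$ and using $\inf_{y\in I^{\varepsilon_0}}g_\Delta^{6}(y)\gtrsim\Delta$ gives $\mathsf{E}[(\Delta X)^6 K_{3,n}^2]\gtrsim \Delta\int_{|z|\le\varepsilon_0}K_{3,n}(z)^2\,dz$, since for $x\in I$ the substitution $z=x-y$ maps $I^{\varepsilon_0}$ onto a set containing $[-\varepsilon_0,\varepsilon_0]$. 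To evaluate the remaining $L^2$-mass I would rescale: with $Q_3(u)=\tfrac{1-\alpha}{2}u/\Phi_{\Delta X}(u)$ and the substitution $v=uh_n$ one gets $K_{3,n}(z)=h_n^{-2}\kappa_n(z/h_n)$, where $\kappa_n(t)=-\tfrac{1-\alpha}{4\pi}\int_{-1}^{1}e^{-\mathrm{i}vt}\,v\varphi_W(v)/\Phi_{\Delta X}(v/h_n)\,dv$. Because $|\Delta\Psi(v/h_n)|\lesssim\Delta h_n^{-2}\lesssim h_n\to0$ under $h_n^{3}\gtrsim\Delta$ (Lemma~\ref{infPhi} and its estimate $|\psi(u)|\lesssim u^2$), $\Phi_{\Delta X}(v/h_n)\to1$ uniformly on $[-1,1]$, so $\kappa_n\to\kappa_\infty:=-\tfrac{1-\alpha}{4\pi}\mathcal{F}[v\varphi_W(v)]$ in $L^2(\mathbb{R})$ with $\|\kappa_\infty\|_2^2\propto\int_{-1}^{1}v^2|\varphi_W(v)|^2\,dv>0$. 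Consequently $\int_{|z|\le\varepsilon_0}K_{3,n}(z)^2\,dz=h_n^{-3}\int_{|t|\le\varepsilon_0/h_n}\kappa_n(t)^2\,dt\gtrsim h_n^{-3}$, whence $\mathsf{E}[(\Delta X)^6 K_{3,n}^2]\gtrsim\Delta h_n^{-3}$.

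It remains to show that the other pieces of $\mathsf{E}[B_2^2]$ and the squared mean are $o(\Delta h_n^{-3})$. For the squared mean I would pass to the Fourier side, $\mathsf{E}[(\Delta X)^3 K_{3,n}(x-\Delta X)]=-\tfrac{1}{2\pi}\int e^{-\mathrm{i}ux}Q_3(u)\varphi_W(uh_n)\,\widehat{\mu_3}(u)\,du$ with $\widehat{\mu_3}(u)=\int e^{\mathrm{i}uy}y^3P_\Delta(dy)$ proportional to $\Phi_{\Delta X}'''(u)$. Expanding $\Phi_{\Delta X}'''=\Phi_{\Delta X}(\Delta\Psi'''+3\Delta^2\Psi''\Psi'+\Delta^3(\Psi')^3)$ and invoking $\|\Psi'\|_{I_h}\lesssim h_n^{-1}$, $\|\Psi^{(k)}\|_{I_h}\lesssim1$ for $k\ge2$ (Lemma~\ref{DifPhiPhi}) together with $|\Phi_{\Delta X}|\le1$ yields $|\widehat{\mu_3}(u)|\lesssim\Delta$ on $|u|\le h_n^{-1}$, hence $|\mathsf{E}[(\Delta X)^3 K_{3,n}]|\lesssim\Delta\int_{|u|\le h_n^{-1}}|u|\,du\lesssim\Delta h_n^{-2}$, the $K_{1,n}$ contribution being even smaller. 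Thus $\mathsf{E}[B_2]^2\lesssim\Delta^2 h_n^{-4}=(\Delta h_n^{-1})\cdot\Delta h_n^{-3}=o(\Delta h_n^{-3})$ since $\Delta h_n^{-1}\lesssim h_n^2\to0$. The $K_{1,n}$-terms in $\mathsf{E}[B_2^2]$ are controlled via $\|K_{1,n}\|_\infty\lesssim\Delta h_n^{-2}$ (from $|Q_1|\lesssim\Delta h_n^{-1}$ integrated over $|u|\le h_n^{-1}$), Lemma~\ref{Moments}, and the $L^1$-bound $\|K_{3,n}\|_{L^1}\lesssim h_n^{-1}$ from the same rescaling, giving $\mathsf{E}[(\Delta X)^2 K_{1,n}^2]\lesssim\Delta^3 h_n^{-4}$ and $|\mathsf{E}[(\Delta X)^4 K_{1,n}K_{3,n}]|\lesssim\Delta^2 h_n^{-3}$, both $o(\Delta h_n^{-3})$. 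Combining these gives $\mathsf{Var}[B_2(x,\Delta X)]\gtrsim\Delta h_n^{-3}$ uniformly in $x\in I$, which is the claim.

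The main obstacle I anticipate is the second paragraph: proving that the $L^2$-mass of the band-limited kernel $K_{3,n}$ localised to the neighbourhood $I^{\varepsilon_0}$ (on which alone the density lower bound $g_\Delta^6\gtrsim\Delta$ is available) still captures the full $h_n^{-3}$ order. The rescaling $K_{3,n}(z)=h_n^{-2}\kappa_n(z/h_n)$ together with the $L^2$-convergence $\kappa_n\to\kappa_\infty$ is precisely what renders the tail outside $[-\varepsilon_0,\varepsilon_0]$ asymptotically negligible; care is needed to make this uniform in $x\in I$ and to verify $\Phi_{\Delta X}(\cdot/h_n)\to1$ on $[-1,1]$ under the bandwidth condition $h_n^{3}\gtrsim\Delta$ of Assumption~\ref{assumption1}\,\ref{ass5}.
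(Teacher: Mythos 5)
Your proof is correct and rests on the same two pillars as the paper's own argument: the density lower bound $\inf_{y\in I^{\varepsilon_{0}}}g_{\Delta}^{6}(y)\gtrsim\Delta$ from Lemma~\ref{InfPDelta} and the fact that $K_{3,n}$ (through $|Q_{3}(u)|\gtrsim|u|$ and $|\Phi_{\Delta X}|\leq1$) carries $L^{2}$-mass of order $h_n^{-3}$, with all means and remaining terms shown to be of smaller order. The differences are in execution, and they are worth recording. First, the paper does not isolate $B_{2}$: it writes $s^{2}(x)$ as the sum of the four variances $\sum_{m=0}^{3}\mathsf{Var}\bigl[(\Delta X)_{1}^{m}K_{m,n}(x-(\Delta X)_{1})\bigr]$, thereby silently discarding the cross-covariances that you bound explicitly through the $K_{1,n}$--$K_{3,n}$ cross term, and it lower-bounds the second moments for all four values of $m$ (display \eqref{InfPMoments}) rather than only the dominant $m=3$ one. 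Second, and more substantively, the paper handles the localization by writing $\int K_{m,n}^{2}(y)\,g_{\Delta}^{2m}(x-y)\,dy\geq\inf g_{\Delta}^{2m}\cdot\int_{\mathbb{R}}K_{m,n}^{2}(y)\,dy$ and then applying Plancherel over all of $\mathbb{R}$; as written this is not justified, since the bound $g_{\Delta}^{2m}\gtrsim\Delta$ holds only on $I^{\varepsilon_{0}}$ and the global infimum of the density is not of order $\Delta$. Your rescaling $K_{3,n}(z)=h_n^{-2}\kappa_{n}(z/h_n)$ with $\kappa_{n}\to\kappa_{\infty}$ in $L^{2}$ (valid because $\Phi_{\Delta X}(\cdot/h_n)\to1$ uniformly on the relevant range under $h_n^{3}\gtrsim\Delta$) is precisely what is needed to show that the window $[-\varepsilon_{0},\varepsilon_{0}]$ still captures the full $h_n^{-3}$ order, so your version actually patches this gap in the paper's proof. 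Third, your mean bound $|\mathsf{E}[(\Delta X)^{3}K_{3,n}(x-\Delta X)]|\lesssim\Delta h_n^{-2}$ is cruder than the paper's $\Delta h_n^{-1}$ (obtained in \eqref{SupyK3} by exploiting the oscillation of $e^{-\mathrm{i}ux}$ for $x$ bounded away from the origin), but, as you observe, it suffices because $\Delta h_n^{-1}\lesssim h_n^{2}\to0$. The one step you should tighten is the bound $\Vert K_{3,n}\Vert_{L^{1}}\lesssim h_n^{-1}$ used for the cross term, which requires smoothness and decay properties of $\varphi_{W}$ that neither you nor the paper make explicit; this can be avoided entirely by Cauchy--Schwarz together with $\Vert g_{\Delta}^{2m}\Vert_{\mathbb{R}}\lesssim\Delta$ from Lemma~\ref{InfPDelta}, which gives $|\mathsf{E}[(\Delta X)^{4}K_{1,n}K_{3,n}]|\lesssim\sqrt{\Delta^{3}h_n^{-4}\cdot\Delta h_n^{-3}}=\Delta^{2}h_n^{-7/2}=o(\Delta h_n^{-3})$.
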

\begin{proof} 
We have
\begin{equation*}
\begin{split}
s^{2}(x)&=\stackrel[m=0]{3}{\sum}\mathrm{Var}\bigl[(\Delta X)_{1}^{m}K_{m,n}\bigl(x-(\Delta X)_{1}\bigr)\bigr]\\
&=\stackrel[m=0]{3}{\sum}\bigl(\mathsf{E}\bigl[(\Delta X)_{1}^{2m}K_{m,n}^2\bigl(x-(\Delta X)_{1}\bigr)\bigr]-\bigl(\mathsf{E}\bigl[(\Delta X)_{1}^{m}K_{m,n}\bigl(x-(\Delta X)_{1}\bigr)\bigr]\bigr)^2\bigr).
\end{split}
\end{equation*}
In order to determine the  infimum of the variance, we compute the supremum of the expected value $\mathsf{E}\bigl[(\Delta X)_{1}^{m}K_{m,n}\bigl(x-(\Delta X)_{1}\bigr)\bigr]$ and the infimum  of $\mathsf{E}\bigl[(\Delta X)_{1}^{2m}K_{m,n}^2\bigl(x-(\Delta X)_{1}\bigr)\bigr]$ for $m=0,1,2,3.$ Note that 
 \begin{equation*}
 \mathsf{E}\bigl[(\Delta X)_{1}^{m}K_{m,n}\bigl(x-(\Delta X)_{1}\bigr)\bigr]=\int_{\mathbb{R}}y^{m}K_{m,n}(x-y)P_{\Delta}\,(dy).
 \end{equation*}
 Further we get
 \begin{align*}
 \int_{\mathbb{R}}K_{0,n}(x-y)P_{\Delta}(dy)	&=\int_{\mathbb{R}}\biggl[\int e^{-\mathrm{i}u(x-y)}Q_{0}(u)\varphi_{W}(uh_{n})\,du\biggr]P_{\Delta}(dy)\\
	&=\int_{\mathbb{R}}\biggl[\int_{\mathbb{R}}e^{-\mathrm{i}u(x-y)}P_{\Delta}(dy)\biggr]Q_{0}(u)\varphi_{W}(uh_{n})\,du\\
	&=\int_{\mathbb{R}}e^{-\mathrm{i}ux}\Phi_{\Delta X}(u)Q_{0}(u)\varphi_{W}(uh_{n})\,du\\
	&=\Delta\int_{-1/h_n}^{1/h_n}e^{-\mathrm{i}ux}\biggl(\frac{2-\alpha}{2}\bigl(\Delta(\Psi'(u))^{2}-\Psi''(u)\bigr)\bigr.\\
	&\bigl.-u\frac{1-\alpha}{2}(\Psi'''(u)-3\Delta\Psi''(u)\Psi'(u)-\Delta^{2}(\Psi'(u))^{3})\biggr)du.
 \end{align*}
 Furthermore 
 \begin{align}
 \label{Int1}
 \int_{-1/h_n}^{1/h_n}e^{-\mathrm{i} ux}u\Psi'''(u)\,du&\leq\int_{-1/h_n}^{1/h_n}e^{-\mathrm{i}ux}u\,du=-2\mathrm{i}\int_{0}^{1/h_n}u\sin (ux)\,du\\
 \nonumber
 &=\frac{2i}{xh_n}\cos(x/h_n)+\frac{1}{x^{2}}\sin(x/h_n)\lesssim h_n^{-1}
 \end{align}
 for \(x\in I.\)
Analogously we get
 \begin{equation}
 \label{Int2}
 \begin{split}
& \int_{-1/h_n}^{1/h_n}e^{-\mathrm{i}ux}u\Psi''(u)\Psi'(u)\,du\leq h_n^{-1}\int_{-1/h_n}^{1/h_n}e^{-iux}u\,du\lesssim h_n^{-2},\\
&\int_{-1/h_n}^{1/h_n}e^{-\mathrm{i}ux}u(\Psi'(u))^{3}\,du\leq h^{-3}\int_{-1/h_n}^{1/h_n}e^{-\mathrm{i}ux}u\,du\lesssim h_n^{-4},\\
&\int_{-1/h_n}^{1/h_n}e^{-iux}(\Psi'(u))^{2}\,du\leq h_n^{-2}\int_{-1/h_n}^{1/h_n}e^{-\mathrm{i}ux}\,du\lesssim h_n^{-2},\\
&\int_{-1/h_n}^{1/h_n}e^{-\mathrm{i}ux}\Psi''(u)\,du\leq\int_{-1/h_n}^{1/h_n}e^{-\mathrm{i}ux}\,du\lesssim 1.
 \end{split}
 \end{equation}
 Then due to \eqref{Int1} and \eqref{Int2} we get
 \begin{equation}
 \label{SupK0}
 \int_{\mathbb{R}}K_{0,n}(x-y)P_{\Delta}(dy) \lesssim\Delta(\Delta h_n^{-2}+1+h_n^{-1}+\Delta^{2}h_n^{-4}) \lesssim\Delta h_n^{-1}.
 \end{equation}
Analogously 
\begin{equation}
\label{SupyK1}
\begin{split}
\int_{\mathbb{R}}yK_{1,n}(x-y)P_{\Delta}(dy)	&=\int_{\mathbb{R}}y\biggl[\int e^{-\mathrm{i}u(x-y)}Q_{1}(u)\varphi_{W}(uh_{n})\,du\biggr]P_{\Delta}(dy)\\
	&=\int_{\mathbb{R}}\biggl[\int_{\mathbb{R}}ye^{-\mathrm{i}u(x-y)}P_{\Delta}(dy)\biggr]Q_{1}(u)\varphi_{W}(uh_{n})\,du\\
	&=\int_{\mathbb{R}}e^{-\mathrm{i}ux}\Phi'_{\Delta X}(u)Q_{1}(u)\varphi_{W}(uh_{n})\,du\\
	&=\Delta^{2}\int^{1/h_n}_{-1/h_n} e^{-\mathrm{i}ux}\Psi'(u)\biggl(3u\frac{1-\alpha}{2}\bigl(\Delta(\Psi'(u))^{2}+\Psi''(u)\bigr)\biggr.\\
	&\biggl.\hspace{4cm}-(2-\alpha)\Psi'(u)\biggr)\, du\\
	&\lesssim\Delta^{2}(\Delta h_n^{-4}+h_n^{-2}+h_n^{-2}) \lesssim\Delta,
\end{split}
\end{equation}
\begin{equation}
\label{SupyK2}
\begin{split}
\int_{\mathbb{R}}y^{2}K_{2,n}(x-y)P_{\Delta}(dy)&=\int_{\mathbb{R}}y^{2}\biggl[\int e^{-\mathrm{i}u(x-y)}Q_{2}(u)\varphi_{W}(uh_{n})\,du\biggr]P_{\Delta}(dy)\\
	&=\int_{\mathbb{R}}\left[\int_{\mathbb{R}}y^{2}e^{-\mathrm{i}u(x-y)}P_{\Delta}(dy)\right]Q_{2}(u)\varphi_{W}(uh_{n})\,du\\
	&=\int_{\mathbb{R}}e^{-\mathrm{i}ux}\Phi''_{\Delta X}(u)Q_{2}(u)\varphi_{W}(uh_{n})\,du\\
	&=\Delta\int^{1/h_n}_{-1/h_n}e^{-\mathrm{i}ux}\bigl(\Psi''(u)+\Delta(\Psi'(u))^{2}\bigr)\\
	&\hspace{2cm}\times\biggl(\frac{2-\alpha}{2}-3u\Delta\Psi'(u)\frac{1-\alpha}{2}\biggr)\,du\\
	&\lesssim\Delta(1+\Delta h_n^{-2}+\Delta h_n^{-2}+\Delta^{2}h_n^{-4})\lesssim\Delta,
\end{split}	
\end{equation}
 \begin{equation}
 \label{SupyK3}
 \begin{split}
 \int_{\mathbb{R}}y^{3}K_{3,n}(x-y)P_{\Delta}(dy)&=\int_{\mathbb{R}}y^{3}\biggl[\int e^{-\mathrm{i}u(x-y)}Q_{3}(u)\varphi_{W}(uh_{n})\,du\biggr]P_{\Delta}(dy)\\
	&=\int_{\mathbb{R}}\biggl[\int_{\mathbb{R}}y^{3}e^{-\mathrm{i}u(x-y)}P_{\Delta}(dy)\biggr]Q_{3}(u)\varphi_{W}(uh_{n})\,du\\
	&=\int_{\mathbb{R}}e^{-\mathrm{i}ux}\Phi'''_{\Delta X}(u)Q_{3}(u)\varphi_{W}(uh_{n})\,du\\
		&=\Delta\int^{1/h_n}_{-1/h_n}e^{-\mathrm{i}ux}\biggl(u\frac{1-\alpha}{2}(\Psi'''(u)+3\Delta\Psi''(u)\Psi'(u)\biggr.\\
	&\biggl.\hspace{4cm}+\Delta^{2}(\Psi'(u))^{3})\biggr)\,du,\\
	&\lesssim\Delta\left(h_n^{-1}+\Delta h_n^{-2}+\Delta^{2}h_n^{-4}\right)\lesssim\Delta h_n^{-1}.
\end{split}
 \end{equation} 
To estimate the infimum $
\mathsf{E}\bigl[(\Delta X)_{1}^{2m}K_{m,n}^2\bigl(x-(\Delta X)_{1}\bigr)\bigr]$
 for $m=0,1,2,3$, we consider
\begin{equation*}
\begin{split}
\mathsf{E}\left[\left(\Delta X\right)_{1}^{2m}K_{m,n}^{2}\left(x-\left(\Delta X\right)_{1}\right)\right] & =\int_{\mathbb{R}}y^{2m}K_{m,n}^{2}\left(x-y\right)P_{\Delta}(dy)\\
 & =\int_{\mathbb{R}}K_{m,n}^{2}\left(x-y\right)g_{\Delta}^{2m}(y)dy\\
 & =\int_{\mathbb{R}}K_{m,n}^{2}\left(y\right)g_{\Delta}^{2m}(x-y)dy.
\end{split}
\end{equation*}
Since $K_{m,n}(z)=-\frac{1}{2\pi}\int_{\mathbb{R}}e^{-\mathrm{i}uz}Q_{m}(u)\varphi_{W}(uh_{n})\,du$, we have according to the Plancherel's theorem:
\begin{equation*}
\int_{\mathbb{R}}K_{m,n}^{2}(y)\,dy=-\frac{1}{2\pi}\int_{\mathbb{R}}\bigl|Q_{m}(u)\varphi_{W}(uh_{n})\bigr|^{2}du,
\end{equation*}
where
\begin{equation*}
\begin{split}
Q_{0}(u) & =\frac{\Delta}{\Phi_{\Delta X}(u)}\biggl(\frac{2-\alpha}{2}\bigl(\Delta(\Psi'(u))^{2}-\Psi''(u)\bigr)\biggr.\\
 & \biggl.\hspace{2cm}-u\frac{1-\alpha}{2}\bigl(\Psi'''(u)-3\Delta\Psi''(u)\Psi'(u)-\Delta^{2}(\Psi'(u))^{3}\bigr)\biggr),\\
Q_{1}(u) & =\frac{\Delta}{\Phi_{\Delta X}(u)}\biggl(3u\frac{1-\alpha}{2}\bigl(\Delta(\Psi'(u))^{2}+\Psi''(u)\bigr)-(2-\alpha)\Psi'(u)\biggr),\\
Q_{2}(u) & =\frac{1}{\Phi_{\Delta X}(u)}\biggl(\frac{2-\alpha}{2}-3u\Delta\Psi'(u)\frac{1-\alpha}{2}\biggr),\\
Q_{3}(u) & =\frac{1}{\Phi_{\Delta X}(u)}\biggl(u\frac{1-\alpha}{2}\biggr).
\end{split}
\end{equation*}
Furthermore applies
\begin{equation*}
\begin{split}
\inf_{x\in I}\int_{\mathbb{R}}K_{m,n}^{2}(y)g_{\Delta}^{2m}(x-y)\,dy & \geq\inf_{x\in I}g_{\Delta}^{2m}(x-y)\int_{\mathbb{R}}K_{m,n}^{2}(y)\,dy\\
 & \geq\frac{1}{2\pi}\inf_{x\in I}g_{\Delta}^{2m}(x-y)\int_{\mathbb{R}}\bigl|Q_{m}(u)\varphi_{W}(uh_{n})\bigr|^{2}du\\
 & =\frac{1}{2\pi}\inf_{x\in I}g_{\Delta}^{2m}(x-y)\int^{1/h_n}_{-1/h_n}\bigl|Q_{m}(u)\bigr|^{2}du.
\end{split}
\end{equation*}
Since $\bigl|\Phi_{\Delta X}(u)\bigr|\leq1$ for all \(u,\) it follows for $m=3:$
\begin{equation}
\label{BQ3}
\begin{split}
\int^{1/h_n}_{-1/h_n}\bigl|Q_{3}(u)\bigr|^{2}du & =\int^{1/h_n}_{-1/h_n}\frac{1}{\Phi_{\Delta X}^{2}(u)}\biggl(u\frac{1-\alpha}{2}\biggr)^{2}du \geq\int^{1/h_n}_{-1/h_n}\biggl(u\frac{1-\alpha}{2}\biggr)^{2}du\\
 & =\left(\frac{1-\alpha}{2}\right)^{2}\int^{1/h_n}_{-1/h_n}u^{2}\,du=\frac{(1-\alpha)^2}{2}\frac{u^{3}}{3}\mid_{0}^{1/h_n}=\mathcal{O}(h_n^{-3})
\end{split}
\end{equation}
The same argument applies to $m=2:$
\begin{equation*}
\begin{split}
\int^{1/h_n}_{-1/h_n}\left|Q_{2}(u)\right|^{2}du & =\int^{1/h_n}_{-1/h_n}\frac{1}{\Phi_{\Delta X}^{2}(u)}\left(\frac{2-\alpha}{2}-3u\Delta\Psi'(u)\frac{1-\alpha}{2}\right)^{2}du\\
 & \geq\left(\frac{2-\alpha}{2}\right)^{2}\int^{1/h_n}_{-1/h_n}du-\frac{3}{2}(2-\alpha)(1-\alpha)\Delta\int^{1/h_n}_{-1/h_n}u\Psi'(u)\,du\\
 & \hspace{2cm}+9\Delta^{2}\biggl(\frac{1-\alpha}{2}\biggr)^{2}\int^{1/h_n}_{-1/h_n}(u\Psi'(u))^{2}du.
\end{split}
\end{equation*}
Furthermore
\begin{equation*}
\begin{split}
\int^{1/h_n}_{-1/h_n}u\Psi'(u)du & =\frac{2}{1-\alpha}\biggl[\int^{1/h_n}_{-1/h_n}\biggl(\int_{0}^{1}u(-\sigma^{2}uy+\mathrm{i}\gamma)y^{\frac{\alpha}{1-\alpha}}\,dy\biggr)du\biggr.\\
 & \biggl.\hspace{1cm}+\int^{1/h_n}_{-1/h_n}\biggl(\int_{0}^{1}\int_{\mathbb{R}}\mathrm{i}ux(e^{\mathrm{i}uyx}-\mathbb{I}_{\{|x|\leq1\}})\nu(dx)y^{\frac{\alpha}{1-\alpha}}dy\biggr)du\biggr]\\
 & =\mathcal{O}(h_n^{-3}+h_n^{-2})=\mathcal{O}(h_n^{-3}).
\end{split}
\end{equation*}
Similarly  $\int^{1/h_n}_{-1/h_n}(u\Psi'(u))^{2}\,du=\mathcal{O}(h_n^{-5})$ and we have
\begin{equation}
\label{BQ2}
\int^{1/h_n}_{-1/h_n}\bigl|Q_{2}(u)\bigr|^{2}du =\mathcal{O}(h_n^{-1}+\Delta h_n^{-3}+\Delta^{2}h_n^{-5})=\mathcal{O}(h_n^{-1}),
\end{equation}
\begin{equation}
\label{BQ1}
\begin{split}
\int^{1/h_n}_{-1/h_n}\bigl|Q_{1}(u)\bigl|^{2}du & \geq\Delta^{2}\int^{1/h_n}_{-1/h_n}\biggl(3u\frac{1-\alpha}{2}\bigl(\Delta(\Psi'(u))^{2}+\Psi''(u)\bigr)-(2-\alpha)\Psi'(u)\biggr)^{2}du\\
 &=\mathcal{O}\bigl(\Delta^{2}(\Delta^2 h_n^{-7}+\Delta h_n^{-5}+h_n^{-3})\bigr)=\mathcal{O}(\Delta^{2}h_n^{-3}),
\end{split}
\end{equation}
\begin{equation}
\label{BQ0}
\begin{split}
\int^{1/h_n}_{-1/h_n}\bigl|Q_{0}(u)\bigr|^{2}du & =\Delta^{2}\int^{1/h_n}_{-1/h_n}\biggl(\frac{2-\alpha}{2}\bigl(\Delta(\Psi'(u))^{2}-\Psi''(u)\bigr)\biggr.\\
 & \biggl.\hspace{1em}-u\frac{1-\alpha}{2}\bigl(\Psi'''(u)-3\Delta\Psi''(u)\Psi'(u)-\Delta^{2}(\Psi'(u))^{3}\bigr)\biggr)^{2}du\\
 & =\mathcal{O}(\Delta^{2}(h_n^{-3}+\Delta h_n^{-3}+h_n^{-1}+\Delta^2 h_n^{-5}+
\Delta^3 h_n^{-7}+\Delta^4 h_n^{-9}))\\
&=\mathcal{O}(\Delta^{2}h_n^{-3}).
\end{split}
\end{equation}
Taking into account Lemma~\ref{InfPDelta}, we get
\begin{equation}
\label{InfPMoments}
\begin{split}
&\inf_{x\in I}\int_{\mathbb{R}}K_{0,n}^{2}(x-y)P_{\Delta}(dy)  \gtrsim\Delta^{2}h_n^{-3}\\
&\inf_{x\in I}\int_{\mathbb{R}}y^{2}K_{1,n}^{2}(x-y)P_{\Delta}(dy)  \gtrsim\Delta^{3}h_n^{-3}\\
&\inf_{x\in I}\int_{\mathbb{R}}y^{4}K_{2,n}^{2}(x-y)P_{\Delta}(dy)  \gtrsim\Delta h_n^{-1}\\
&\inf_{x\in I}\int_{\mathbb{R}}y^{6}K_{3,n}^{2}(x-y)P_{\Delta}(dy)  \gtrsim\Delta h_n^{-3}
\end{split}
\end{equation}
Finally, by combining \eqref{InfPMoments} with \eqref{SupK0}-\eqref{SupyK3},  we prove the claim.
\end{proof}
\subsection{Proof of Theorem \ref{thm1}}
Using the equations \eqref{ZerlDichte} and \eqref{rhoTilde}, the difference  $\widehat{\rho}_{n}(x) -\rho(x)$ can be represented as
\begin{equation*}
\widehat{\rho}_{n}(x)-\rho(x)=\underbrace{\bigl(\widehat{\rho}_{n}(x)-\widetilde{\rho}(x)\bigr)}_{R_n(x)}+\underbrace{\bigl(\widetilde{\rho}(x)-\rho(x)\bigr)}_{I_{\sigma^2_n}(x)+I_{\rho_n}(x)},
 \end{equation*}
 where
 \begin{equation*}
 \begin{split}
&\widetilde{\rho}(x):=-\frac{1}{2\pi\Delta}\int_{\mathbb{R}}e^{-\mathrm{i}ux}\bigl[(\mathcal{L}^{-1}_\alpha \Psi)''(u)+\Delta\sigma^{2}\bigr]\varphi_{W}(uh)\,du,\\
&I_{\sigma^2_n}(x):=-\frac{1}{2\pi}\int_{\mathbb{R}}e^{-\mathrm{i}u x}(\widehat{\sigma}^{2}_{n}-\sigma^{2})\varphi_{W}(uh)\,du,\\
&I_{\rho_n}(x):=\bigl[\rho\ast(h^{-1}W(\cdot/h))\bigr](x)-\rho(x).
\end{split}
 \end{equation*}
Under  assumptions \ref{assumption1} \ref{ass6} and lemma \ref{2TermRest}, the   terms $I_{\rho_n}$ and $I_{\sigma^2_n}$ are asymptotically (as \(n\to \infty\) and \(\Delta\to 0\)) smaller than $R_n$ and hence can be neglected  when constructing the confidence interval for the transformed L\'evy density $\rho.$ With the  notations \ref{Rn} for $R_n(x)$, namely
\begin{equation*}
\begin{split}
R_{n}(x)&=\frac{1}{n\Delta}\stackrel[m=0]{3}{\sum}\biggl(\stackrel[j=1]{n}{\sum}\bigl\{ \mathrm{i}^{m}(\Delta X)_{j}^{m}K_{m,n}(x-(\Delta X)_{j})\bigr.\biggr.\\
&\biggl.\bigl.\hspace{3cm}-\mathsf{E}\bigl[\mathrm{i}^{m}(\Delta X)_{1}^{m}K_{m,n}(x-(\Delta X)_{1})\bigr]\bigr\}\biggr),
\end{split}
\end{equation*}
where the kernel functions $K_{m,n}(z),\,m=0,1,2,3,$ are defined as
\begin{equation*}
K_{m,n}(z):=-\frac{1}{2\pi}\int_{\mathbb{R}}e^{-\mathrm{i}uz}Q_{m}(u)\varphi_{W}(uh_{n})\,du
\end{equation*}
 consider the process
\begin{equation}
T_{n}(x):= \frac{\sqrt{n}\Delta}{s(x)}R_{n}(x),
\label{Tn}
\end{equation}
where $s^{2}(x)$ is given by
\begin{equation}
\label{VarRn}
s^{2}(x):= \mathsf{Var}\biggl[\stackrel[m=0]{3}{\sum}\mathrm{i}^{m}(\Delta X)_{1}^{m}K_{m,n}(x-(\Delta X)_{1})\biggr]
\end{equation}
Futher we show that exists a tight $\ell^{\infty}(I)$-sequence 
of Gaussian random variables $T_{n}^{G}$  with zero mean  and the same covariance function as one of $\,T_{n}$, and such that the distribution of $\Vert  T_{n}^{G}\Vert_{I}:=\sup_{x\in I}|T_{n}^{G}(x)|$
asymptotically approximates the distribution of $\Vert T_{n}\Vert_{I}$
in the sense that
\begin{equation*}
\underset{z\in\mathbb{R}}{\sup}\bigl|\mathrm{P}\left\{ \bigl\Vert T_{n}\bigr\Vert _{I}\leq z\right\} -\mathrm{P}\left\{ \bigl\Vert T_{n}^{G}\bigr\Vert_{I}\leq z\right\} \bigr|\to 0, \quad n\to \infty.
\end{equation*}
In what follows, we always assume Assumption \ref{assumption1}. The proofs rely on modern empirical process theory. For a probability measure $Q$ on a measurable space $(S, \mathcal{S})$ and a class of measurable functions $\mathcal{F}$ on $S$ such that $\mathcal{F} \in  L^2(Q)$, let $N(\mathcal{F}, \left\Vert \cdot \right\Vert_{Q,2},\varepsilon)$ denote the $\varepsilon$-covering number for $\mathcal{F}$
with respect to the $L^2(Q)$-seminorm $\left\Vert \cdot \right\Vert_{Q,2}$. See Section 2.1 in \cite{van1996weak} for details. Let $\stackrel{d}{=}$ denote the equality in distribution.
Consider the function class 
\begin{equation*}
\mathcal{F}_{i,n}=\left\{y\mapsto \frac{(\mathrm{i} y)^i}{s(x)}K_{i,n}(x-y):\quad x\in I\right\} .
\end{equation*}
According to Lemma~\ref{lemvar} 
\begin{equation*}
\inf_{x\in I}s^{2}(x)\gtrsim\Delta h_n^{-3}
\end{equation*}
and we have
\begin{equation}
\label{TnKonv}
\frac{\sqrt{n}\Delta(\widehat{\rho}_{n}(x)-\rho(x))}{s(x)}=T_{n}(x)+o_{\mathrm{P}}\left(h_n^{1/2}\log h_n^{-1}\right) 
\end{equation}
uniformly in $x\in I$. Under condition  \ref{ass5} of Assumption~\ref{assumption1}, the expression $h_n^{1/2}\log h_n^{-1}$  converges to 0.
Further we approximate $\Vert T_{n}\Vert_{I}$ by the supremum of a tight Gaussian random variable $T_{n}^{G}$ in $\ell^{\infty}(I)$ with  expected value zero and the same covariance function as for the random variable $T_{n}$. 
Using Theorem~2.1 in \cite{chernozhukovand2016}, which proves the existence of such  random variable $T_{n}^{G}$, we consider the empirical process:
\begin{equation*}
\mathsf{G}_{n}(f_{i,n})=\frac{1}{\sqrt{n}}\stackrel[i=0]{3}{\sum}\left[\stackrel[j=1]{n}{\sum}\bigl\{f_{i,n}((\Delta X)_{j})-\mathsf{E}\bigl[f_{i,n}((\Delta X)_{1})\bigr]\bigr\}\right],\ f_{i,n}\in\mathcal{F}_{i,n}. 
\end{equation*}
Note that according lemma \ref{InfPDelta} the increment process $ X_{\Delta}-X_{0}$ has the distribution $P_{\Delta}$, so that $y^{2m}P_{\Delta}(dy)=g_{\Delta}^{2m}(y) dy,$  with $\bigl\Vert g_{\Delta}^{2}\bigr\Vert_{\mathbb{R}}\lesssim\Delta^{1/2}$ and $\bigl\Vert g_{\Delta}^{2m}\bigr\Vert_{\mathbb{R}}\lesssim\Delta$ for $m=2,3.$ Hence
\begin{equation*}
\begin{split}
\mathsf{E}\bigl[(\Delta X)_{1}^{2m}K_{m,n}^{2}(x-(\Delta X)_{1})\bigr] & =\int_{\mathbb{R}}y^{2m}K_{m,n}^{2}(x-y)P_{\Delta}(dy)\\
 & =\int_{\mathbb{R}}K_{m,n}^{2}(x-y)g_{\Delta}^{2m}(y)\,dy\\
 & =\int_{\mathbb{R}}K_{m,n}^{2}(y)g_{\Delta}^{2m}(x-y)\,dy\\
 &\leq \bigl\Vert g_{\Delta}^{2m}\bigr\Vert_{\mathbb{R}}\int_{\mathbb{R}}K_{m,n}^{2}(y)\,dy
\end{split}
\end{equation*}
Since $K_{m,n}(z)=-\frac{1}{2\pi}\int_{\mathbb{R}}e^{-\mathrm{i}uz}Q_{m}(u)\varphi_{W}(uh_{n})\,du$, we have according to the Plancherel's theorem,
\begin{equation*}
\int_{\mathbb{R}}K_{m,n}^{2}(y)\,dy=-\frac{1}{2\pi}\int_{\mathbb{R}}\bigl|Q_{m}(u)\varphi_{W}(uh_{n})\bigr|^{2}du.
\end{equation*}
Using   \eqref{BQ3}, \eqref{BQ2}, \eqref{BQ1} and \eqref{BQ0}, we get that
\begin{equation}
\begin{split}
&\mathsf{E}\bigl[K_{0,n}^{2}(x-(\Delta X)_{1})\bigr] \lesssim \Delta^2 h_n^{-3},\\
&\mathsf{E}\bigl[(\Delta X)_{1}^{2}K_{1,n}^{2}(x-(\Delta X)_{1})\bigr]\lesssim \Delta^{5/2} h_n^{-3}, \\
&\mathsf{E}\bigl[(\Delta X)_{1}^{4}K_{2,n}^{2}(x-(\Delta X)_{1})\bigr]\lesssim \Delta h_n^{-1}, \\
&\mathsf{E}\bigl[(\Delta X)_{1}^{6}K_{3,n}^{2}(x-(\Delta X)_{1})\bigr] \lesssim \Delta h_n^{-3},
\end{split}
\end{equation}
 holds and it also follows that 
 \begin{equation*}
 \underset{f_{i,n}\in\mathcal{F}_{i,n}}{\sup}\mathsf{E}\biggl[\stackrel[i=1]{3}{\sum}f_{i,n}^2((\Delta X)_{1})\biggr]\lesssim \frac{\Delta h_n^{-3}}{\Delta h_n^{-3}}\lesssim1.
 \end{equation*}
Furthermore 
 \begin{equation}
 \label{Ksupp}
 \begin{split}
  \bigl\Vert K_{0,n}(\cdot-(\Delta X)_{1})\bigr\Vert_{\mathbb{R}}&\lesssim \Delta (\Delta h_n^{-2}+1+h_n^{-1}+\Delta^2 h_n^{-4})\lesssim \Delta h_n^{-1},\\
 \bigl\Vert K_{1,n}(\cdot-(\Delta X)_{1})\bigr\Vert_{\mathbb{R}}&\lesssim \Delta (\Delta h_n^{-3}+h_n^{-1})\lesssim \Delta h_n^{-1},\\
 \bigl\Vert K_{2,n}(\cdot-(\Delta X)_{1})\bigr\Vert_{\mathbb{R}}&\lesssim  1+\Delta h_n^{-2}\lesssim  1,\\
 \bigl\Vert K_{3,n}(\cdot-(\Delta X)_{1})\bigr\Vert_{\mathbb{R}}&\lesssim  h_n^{-1}.
 \end{split}
 \end{equation}
Hence  $\underset{f_{i,n}\in\mathcal{F}_{i,n}}{\sup}\biggl\Vert \stackrel[i=0]{3}{\sum}f_{i,n}((\Delta X)_{1})\biggr\Vert _{\mathbb{R}}\lesssim \frac{h_n^{-1}}{\sqrt{\Delta h_n^{-3}}}\lesssim \sqrt{h_n}/\sqrt{\Delta}$  and  we have 
\begin{equation}
\label{supEf}
\begin{split}
&\underset{f_{i,n}\in\mathcal{F}_{i,n}}{\sup}\mathbb{E}\biggl[\stackrel[i=0]{3}{\sum}f_{i,n}^3((\Delta X)_{1})\biggr]\lesssim\underset{f_{i,n}\in\mathcal{F}_{i,n}}{\sup}\mathbb{E}\biggl[\stackrel[i=0]{3}{\sum}f_{i,n}^2((\Delta X)_{1})\biggr]\sqrt{h_n}/\sqrt{\Delta}\lesssim\sqrt{h_n}/\sqrt{\Delta},\\
&\underset{f_{i,n}\in\mathcal{F}_{i,n}}{\sup}\mathbb{E}\biggl[\stackrel[i=0]{3}{\sum}f_{i,n}^4((\Delta X)_{1})\biggr]\lesssim\underset{f_{i,n}\in\mathcal{F}_{i,n}}{\sup}\mathbb{E}\biggl[\stackrel[i=0]{3}{\sum}f_{i,n}^2((\Delta X)_{1})\biggr]h_n/\Delta \lesssim h_n/\Delta.\\
\end{split}
\end{equation}
Due Theorem 2.1 in \cite{chernozhukovand2016} with $B(f)\equiv0, A\lesssim1, v\lesssim1, \sigma\sim1, b\lesssim\frac{\sqrt{h_n}}{\sqrt{\Delta}}, \gamma\lesssim\frac{1}{\log n}$ and $q$ sufficiently large, we derive that there exist a random variable $V_{n}$ with the same distribution as $\bigl\Vert U_{n}\bigr\Vert_{\mathcal{F}_{i,n}}$  such that
\begin{equation}
\bigl|\bigl\Vert\mathsf{G}_{n}\bigr\Vert_{\mathcal{F}_{i,n}}-V_{n}\bigr|=\mathcal{O}_{P}\biggl\{ \frac{(\log n)^{1+1/q}}{n^{1/2-1/q}\sqrt{\Delta h^{-1}_n}}+\frac{\log n}{(n\Delta h^{-1}_n)^{1/6}}\biggr\} =\mathcal{O}_{P}\biggl\{\frac{\log n}{(n\Delta h^{-1}_n)^{1/6}}\biggr\}.
\label{GT}
\end{equation}
Taking into account Assumption~ \ref{assumption1} \ref{ass5}, the expression \eqref{TnKonv} converges to zero  slower than \eqref{GT}. Then the statement \ref{DifT} follows. In addition, for
\begin{equation*}
f_{j,n}(y)=(\mathrm{i}y)^j K_{j,n}(x-y)/s(x)
\end{equation*}
 we define $T_{n}^{G}(x)=U_{n}(\stackrel[i=0]{3}{\sum}f_{i,n}(x)),\ x\in I$, and we observe, that there exists a tight Gaussian random variable $T_{n}^{G}(x)$   in  $\ell^{\infty}(I)$  with  expected value zero and the same covariance function as for $T_{n}$. The following concentration inequality holds (see Theorem 2.1 in \cite{chernozhukov2014anti} for any $\varepsilon>0,$
\begin{equation}
\underset{z\in\mathbb{R}}{\sup}\,\mathrm{P}\bigl\{ \bigl|\bigl\Vert T_{n}^{G}\bigr\Vert_{I}-z\bigr|\leq\varepsilon\bigr\} \leq4\varepsilon\bigl(1+\mathsf{E}\bigl[\bigl\Vert T_{n}^{G}\bigr\Vert_{I}\bigr]\bigr).
\label{anticon}
\end{equation}
According to the Corollary 2.1 in \cite{chernozhukov2014anti}, Theorem~3 in \cite{chernozhukov2015comparison} and the representation \ref{GT} for $\Vert\mathsf{G}_{n}\Vert_{\mathcal{F}_{i,n}}-V_{n},$  we can claim that there exists a sequence $\varepsilon_{n}\rightarrow0$  such 
\begin{equation*}
\mathrm{P}\biggl\{ \bigl|\bigl\Vert\mathsf{G}_{n}\bigr\Vert_{\mathcal{F}_{n}}-V_{n}\bigr|\geq\varepsilon_{n}\left(h_n^{1/2}\log h_n^{-1}\right)\biggr\}\leq\varepsilon_{n}.
\end{equation*}
Since $\bigl\Vert\mathsf{G}_{n}\bigr\Vert_{\mathcal{F}_{i,n}}=\bigl\Vert T_{n}\bigr\Vert_{I}$ and $V_{n}\overset{d}{=}\bigl\Vert T_{n}^{G}\bigr\Vert_{I},$ we have that
\begin{equation*}
\begin{split}\mathrm{P}\bigl\{\bigl\Vert T_{n}\bigr\Vert_{I}\leq z\bigr\}  & \leq\mathrm{P}\biggl\{ \bigl\Vert T_{n}^{G}\bigr\Vert_{I}\leq z+\varepsilon_{n}\left(h_n^{1/2}\log h_n^{-1}\right)\biggr\} +\varepsilon_{n}\leq\\
 & \leq\mathrm{P}\bigl\{ \bigl\Vert T_{n}^{G}\bigr\Vert_{I}\leq z\bigr\} +4\varepsilon_{n}\left(h_n^{1/2}\log h_n^{-1}\right)\bigl(1+\mathsf{E}\bigl[\bigl\Vert T_{n}^{G}\bigr\Vert_{I}\bigr]\bigr)+\varepsilon_{n},
\end{split}
\end{equation*}
for all $z\in\mathbb{R}$. In this way we have
\begin{equation*}
\mathrm{P}\bigl\{ \bigl\Vert T_{n}\bigr\Vert_{I}\leq z\bigr\} \geq\mathrm{P}\bigl\{ \bigl\Vert T_{n}^{G}\bigr\Vert_{I}\leq z\bigr\} -4\varepsilon_{n}\left(h_n^{1/2}\log h_n^{-1}\right)\bigl(1+\mathsf{E}\bigl[\bigl\Vert T_{n}^{G}\bigr\Vert_{I}\bigr]\bigr)-\varepsilon_{n},
\end{equation*}
for all $z\in\mathbb{R}$. 

It follows from Corollary 2.2.8 (see \cite{van1996weak}) together with  $\mathrm{Var}\bigl[\stackrel[i=0]{3}{\sum}f_{i,n}((\Delta X)_{1})\bigr]=1$, that
\begin{equation*}
\mathsf{E}\bigl[\bigl\Vert T_{n}^{G}\bigr\Vert_{I}\bigr]=\mathsf{E}\bigl[\bigl\Vert U_{n}\bigr\Vert_{\mathcal{F}_{i,n}}\bigr]\lesssim\int^1_0\sqrt{1+\log\bigl(1/\varepsilon\sqrt{\Delta h_n^{-1}}\bigr)}\,d\varepsilon\lesssim(\log n)^{1/2}.
\end{equation*}
The proof is completed.
\subsection{Proof of Theorem~\ref{theorem2}}
The proof scheme of the validity of bootstrap confidence bands was introduced by Kato and Kurisu  \cite{kato2017bootstrap} and can be represented as follows.

Step 1: Conditional distribution of the supremum of the multiplier process  $\bigl\Vert \widehat{T_{n}}^{MB}\bigr\Vert_{I}$  consistently estimates the distribution of the Gaussian supremum  $\bigl\Vert T_{n}^{G}\bigr\Vert_{I}$  in the sense that
\begin{equation*}
\underset{z\in\mathbb{R}}{\sup}\biggl|\mathrm{P}\biggl\{\bigl\Vert \widehat{T_{n}}^{MB}\bigr\Vert_{I}\leq z\mid\mathsf{D}_{n}\biggr\}-\mathrm{P}\bigl\{\bigl\Vert T_{n}^{G}\bigr\Vert_{I}\leq z\bigr\}\biggr|=o_{P}(1)
\end{equation*}
Step 2: In addition together with Theorem \ref{thm1} we have that
\begin{equation*}
\underset{z\in\mathbb{R}}{\sup}\biggl|\mathrm{P}\biggl\{\biggl\Vert \frac{\Delta\sqrt{n}(\widehat{\rho_{n}}(\cdot)-\rho(\cdot))}{\widehat{s}_{n}(\cdot)}\biggr\Vert _{I}\leq z\biggr\} -\mathrm{P}\bigl\{ \bigl\Vert T_{n}^{G}\bigr\Vert_{I}\leq z\bigr\} \biggr|\to0.
\end{equation*}
Step 3: Combining steps 1 and 2 leads to the conclusion of Theorem \ref{theorem2}.
For an precise proof of Theorem \ref{theorem2} we need the following technical lemma.
\begin{lem} 
We have
 \begin{equation*}
 \bigl\Vert\widehat{s}_{n}^{2}(\cdot)/s^{2}(\cdot)-1\bigr\Vert _{I}=o_{P}\bigl\{\bigl((n\Delta h_n^{-1})^{-1}\log n\bigr)^{1/2}\bigr\}.
\end{equation*}  
\label{sdachs}
\end{lem}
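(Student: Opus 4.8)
The plan is to turn the ratio into an absolute difference and then split that difference into a purely stochastic part and a kernel-estimation part. Since Lemma~\ref{lemvar} gives $\inf_{x\in I}s^{2}(x)\gtrsim\Delta h_{n}^{-3}$, we may write
\[
\bigl\Vert\widehat{s}_{n}^{2}/s^{2}-1\bigr\Vert_{I}\leq\frac{\bigl\Vert\widehat{s}_{n}^{2}-s^{2}\bigr\Vert_{I}}{\inf_{x\in I}s^{2}(x)}\lesssim\frac{\bigl\Vert\widehat{s}_{n}^{2}-s^{2}\bigr\Vert_{I}}{\Delta h_{n}^{-3}},
\]
so it suffices to show $\Vert\widehat{s}_{n}^{2}-s^{2}\Vert_{I}=o_{\mathrm{P}}\bigl(\Delta^{1/2}h_{n}^{-5/2}n^{-1/2}(\log n)^{1/2}\bigr)$, which is exactly $\Delta h_{n}^{-3}$ times the target rate. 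Writing $\zeta_{j}(x):=\sum_{m=0}^{3}\mathrm{i}^{m}(\Delta X)_{j}^{m}K_{m,n}(x-(\Delta X)_{j})$ and $\widehat{\zeta}_{j}(x)$ for the analogue built from $\widehat{K}_{m,n}$, I would introduce the empirical variance $\widetilde{s}_{n}^{2}$ formed from the \emph{true} kernels $K_{m,n}$ and decompose
\[
\widehat{s}_{n}^{2}-s^{2}=\underbrace{\bigl(\widetilde{s}_{n}^{2}-s^{2}\bigr)}_{\text{stochastic fluctuation}}+\underbrace{\bigl(\widehat{s}_{n}^{2}-\widetilde{s}_{n}^{2}\bigr)}_{\text{kernel estimation error}}.
\]

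For the fluctuation term I would write, for each $x\in I$,
\[
\widetilde{s}_{n}^{2}(x)-s^{2}(x)=\Bigl(\tfrac1n\sum_{j=1}^{n}|\zeta_{j}(x)|^{2}-\mathsf{E}|\zeta(x)|^{2}\Bigr)-\Bigl(\bigl|\tfrac1n\sum_{j=1}^{n}\zeta_{j}(x)\bigr|^{2}-|\mathsf{E}\zeta(x)|^{2}\Bigr),
\]
the second bracket being negligible since $\mathsf{E}\zeta(x)\lesssim\Delta h_{n}^{-1}$ by \eqref{SupK0}--\eqref{SupyK3}. The leading bracket is a centred empirical process indexed by $x\in I$, and I would bound its supremum with the maximal inequality used in the proof of Theorem~\ref{thm1}, feeding in the variance proxy $\sup_{x\in I}\mathsf{Var}(|\zeta(x)|^{2})\lesssim\sup_{x\in I}\mathsf{E}|\zeta(x)|^{4}\lesssim\Delta h_{n}^{-5}$ (obtained from the sup-norm bounds \eqref{Ksupp}, the fourth-moment control \eqref{supEf}, and the density bounds of Lemma~\ref{InfPDelta}) and a covering number of the kernel class that contributes only a $\log h_{n}^{-1}$ factor. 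This yields $\Vert\widetilde{s}_{n}^{2}-s^{2}\Vert_{I}=O_{\mathrm{P}}\bigl(n^{-1/2}\Delta^{1/2}h_{n}^{-5/2}(\log h_{n}^{-1})^{1/2}\bigr)$, which matches the target order; the refinement to $o_{\mathrm{P}}$ is then forced by the strict separation of rates in Assumption~\ref{assumption1}\,\ref{ass5} (in particular $(n\Delta h_{n}^{3})^{-1/2}(\log n)^{1/2}\to0$) and by the sub-leading envelope term being of smaller order.

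For the kernel estimation error I would first control $\Vert\widehat{K}_{m,n}-K_{m,n}\Vert_{\mathbb{R}}\lesssim h_{n}^{-1}\Vert\widehat{Q}_{m}-Q_{m}\Vert_{I_{h}}$, and bound $\Vert\widehat{Q}_{m}-Q_{m}\Vert_{I_{h}}$ through the differences $\mathbb{D}^{(k)}=\widehat{\Phi}_{\Delta X}^{(k)}-\Phi_{\Delta X}^{(k)}$ by Lemma~\ref{dPhiest}, the derivative bounds of Lemma~\ref{DifPhiPhi}, and the uniform lower bounds $\inf_{|u|\leq h_{n}^{-1}}|\Phi_{\Delta X}(u)|\gtrsim1$ and $\inf_{|u|\leq h_{n}^{-1}}|\widehat{\Phi}_{\Delta X}(u)|\gtrsim1-o_{\mathrm{P}}(1)$ from Lemma~\ref{infPhi} and the remark following Lemma~\ref{dPhiest}. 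Expanding $|\widehat{\zeta}_{j}|^{2}-|\zeta_{j}|^{2}=2\,\mathrm{Re}\{(\widehat{\zeta}_{j}-\zeta_{j})\overline{\zeta_{j}}\}+|\widehat{\zeta}_{j}-\zeta_{j}|^{2}$ and applying Cauchy--Schwarz together with $\tfrac1n\sum_{j}|\zeta_{j}(x)|^{2}\approx s^{2}(x)\sim\Delta h_{n}^{-3}$ and $\tfrac1n\sum_{j}|\widehat{\zeta}_{j}-\zeta_{j}|^{2}\lesssim\max_{m}\Vert\widehat{K}_{m,n}-K_{m,n}\Vert_{\mathbb{R}}^{2}\,\tfrac1n\sum_{j}(\Delta X)_{j}^{6}$, where $\tfrac1n\sum_{j}(\Delta X)_{j}^{6}=O_{\mathrm{P}}(\Delta)$ by Lemma~\ref{Moments}, I would show this contribution is $o_{\mathrm{P}}\bigl(\Delta^{1/2}h_{n}^{-5/2}n^{-1/2}(\log n)^{1/2}\bigr)$, i.e. strictly smaller than the fluctuation term, again using $h_{n}^{3}\gtrsim\Delta$ and $h_{n}\to0$.

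Combining the two bounds and dividing by $\inf_{x\in I}s^{2}(x)\gtrsim\Delta h_{n}^{-3}$ gives the assertion. The main obstacle is the uniform-in-$x$ concentration of the empirical variance $\widetilde{s}_{n}^{2}$: because the relevant rate $((n\Delta h_{n}^{-1})^{-1}\log n)^{1/2}$ is borderline, one must compute the variance proxy $\mathsf{E}|\zeta(x)|^{4}\lesssim\Delta h_{n}^{-5}$ and the envelope sharply (in particular the cancellation giving $\Vert K_{3,n}\Vert_{\mathbb{R}}\lesssim h_{n}^{-1}$ rather than $h_{n}^{-2}$) and track every power of $\Delta$ and $h_{n}$ so that, after normalisation by $s^{2}$, the maximal inequality lands on the claimed $o_{\mathrm{P}}$ rate.
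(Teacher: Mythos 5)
Your proposal is correct and follows essentially the same route as the paper's own proof: the reduction by $\inf_{x\in I}s^{2}(x)\gtrsim\Delta h_{n}^{-3}$ from Lemma~\ref{lemvar}, the decomposition of $\widehat{s}_{n}^{2}-s^{2}$ into the oracle empirical variance $\widetilde{s}_{n}^{2}$ built from the true kernels plus a kernel-estimation error controlled through $\Vert\widehat{K}_{m,n}-K_{m,n}\Vert_{\mathbb{R}}$ via Lemmas~\ref{dPhiest} and~\ref{infPhi}, and a Chernozhukov--Chetverikov--Kato-type maximal inequality with the variance/envelope computations \eqref{supEf} and \eqref{Ksupp} for the fluctuation term (the paper, too, only reaches $\mathcal{O}_{P}$ of the stated rate at that step, so your candour about the $o_{P}$ refinement is not a defect relative to the paper). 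The one slip is the inequality $\tfrac1n\sum_{j}|\widehat{\zeta}_{j}-\zeta_{j}|^{2}\lesssim\max_{m}\Vert\widehat{K}_{m,n}-K_{m,n}\Vert_{\mathbb{R}}^{2}\,\tfrac1n\sum_{j}(\Delta X)_{j}^{6}$, since the $m=0$ term enters with weight $1$ rather than $(\Delta X)_{j}^{6}$; keeping the weights $(\Delta X)_{j}^{2m}$ separately for each $m$ (as in \eqref{K1dif}--\eqref{K2dif}) repairs this, and the final order is unchanged because $\Vert\widehat{K}_{0,n}-K_{0,n}\Vert_{\mathbb{R}}$ itself carries the compensating factor $\Delta^{1/2}$.
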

This Lemma can be proved using the technique in \cite{kato2017bootstrap} (see Lemma 8.10) together with Corollary  5.1 and A.1 in \cite{chernozhukovand2014a}.
\begin{proof}
First using Lemma \ref{dPhiest} note that
\begin{align*}
\biggl\Vert \frac{\mathbb{D}}{\widehat{\Phi}_{\Delta X}\Phi_{\Delta X}}\biggr\Vert _{\mathbb{R}} \quad\quad& \lesssim n^{-1/2}\log h_n^{-1},\\
\biggl\Vert \biggl(\frac{\mathbb{D}}{\widehat{\Phi}_{\Delta X}\Phi_{\Delta X}}\biggr)'\biggr\Vert _{\mathbb{R}}\,\, & \leq\biggl\Vert \frac{\mathbb{D}'}{\widehat{\Phi}_{\Delta X}\Phi_{\Delta X}}+\frac{\Delta\mathbb{D}\Psi'}{\widehat{\Phi}_{\Delta X}\Phi_{\Delta X}}\biggr\Vert _{\mathbb{R}}\\
 & \lesssim n^{-1/2}\log h_n^{-1}(\Delta^{1/2}+\Delta h_n^{-1})\lesssim n^{-1/2}\log h_n^{-1}\Delta^{1/2},\\
 \biggl\Vert \biggl(\frac{\mathbb{D}}{\widehat{\Phi}_{\Delta X}\Phi_{\Delta X}}\biggr)''\biggr\Vert _{\mathbb{R}}\, & \lesssim n^{-1/2}\log h_n^{-1}(\Delta^{1/2}+\Delta^{3/2}h_n^{-1}+\Delta^{2}h_n^{-2})\\
 & \lesssim n^{-1/2}\log h_n^{-1}\Delta^{1/2},\\
\biggl\Vert \biggl(\frac{\mathbb{D}}{\widehat{\Phi}_{\Delta X}\Phi_{\Delta X}}\biggr)'''\biggr\Vert _{\mathbb{R}} & \lesssim n^{-1/2}\log h_n^{-1}(\Delta^{1/2}+\Delta^{3/2}h_n^{-1}+\Delta^{2}h_n^{-2}+\Delta^{3}h_n^{-3})\\
 & \lesssim n^{-1/2}\log h_n^{-1}\Delta^{1/2}.
\end{align*} 
Furthermore
\begin{align*}
\widehat{K}_{3,n}(z)-K_{3,n}(z) & =-\frac{1}{2\pi}\int_{\mathbb{R}}e^{-\mathrm{i}uz}\bigl(\widehat{Q}_{3}(u)-Q_{3}(u)\bigr)\varphi_{W}(uh_{n})\,du\\
 & =-\frac{1-\alpha}{4\pi}\int_{\mathbb{R}}e^{-\mathrm{i}uz}u\biggl(\frac{-\mathbb{D}(u)}{\widehat{\Phi}_{\Delta X}(u)\Phi_{\Delta X}(u)}\biggr)\varphi_{W}(uh_{n})\,du
\end{align*}
and 
\begin{eqnarray*}
\bigl\Vert \widehat{K}_{3,n}-K_{3,n}\bigr\Vert _{\mathbb{R}}  \lesssim n^{-1/2}\log h_n^{-1}\int^{1/h_n}_{-1/h_n}e^{-\mathrm{i} ux}u\,du\lesssim n^{-1/2}h_n^{-1}\log h_n^{-1}.
\end{eqnarray*}
Analogously
\begin{equation*}
\begin{split}
\bigl\Vert \widehat{K}_{2,n}-K_{2,n}\bigr\Vert _{\mathbb{R}}\quad&\lesssim n^{-1/2}\log h_n^{-1}\biggl(h_n^{-1}+\Delta^{1/2}\int^{1/h_n}_{-1/h_n}e^{-\mathrm{i} ux}u\,du\biggr)\\
&\lesssim n^{-1/2}h_n^{-1}\log h_n^{-1},
\\
\bigl\Vert \widehat{K}_{1,n}-K_{1,n}\bigr\Vert _{\mathbb{R}}\quad&\lesssim n^{-1/2}\log h_n^{-1}\Delta^{1/2}(h_n^{-1}+\Delta^{1/2}h_n^{-1})\lesssim n^{-1/2}\Delta^{1/2}h_n^{-1}\log h_n^{-1},\\
\bigl\Vert \widehat{K}_{0,n}-K_{0,n}\bigr\Vert_{\mathbb{R}}\quad&\lesssim n^{-1/2}\log h_n^{-1}\Delta^{1/2}(h_n^{-1}+\Delta^{1/2}h_n^{-1})\lesssim n^{-1/2}\Delta^{1/2}h_n^{-1}\log h_n^{-1}.
\end{split}
\end{equation*}
Since $\bigl\Vert \widehat{K}_{i,n}^2-K_{i,n}^2\bigr\Vert_{\mathbb{R}}\leq\bigl\Vert \widehat{K}_{i,n}-K_{i,n}\bigr\Vert_{\mathbb{R}}\bigl\Vert \widehat{K}_{i,n}+K_{i,n}\bigr\Vert_{\mathbb{R}},$ we have according to \eqref{Ksupp},
\begin{equation*}
\begin{split}
\bigl\Vert \widehat{K}_{0,n}^2-K_{0,n}^2\bigr\Vert_{\mathbb{R}}&\lesssim n^{-1/2}\Delta^{3/2}h_n^{-2}\log h_n^{-1},\\
\bigl\Vert \widehat{K}_{1,n}^2-K_{1,n}^2\bigr\Vert_{\mathbb{R}}&\lesssim n^{-1/2}\Delta^{3/2}h_n^{-2}\log h_n^{-1},\\
\bigl\Vert \widehat{K}_{2,n}^2-K_{2,n}^2\bigr\Vert_{\mathbb{R}}&\lesssim n^{-1/2}h_n^{-1}\log h_n^{-1},\\
\bigl\Vert \widehat{K}_{3,n}^2-K_{3,n}^2\bigr\Vert_{\mathbb{R}}&\lesssim n^{-1/2}h_n^{-2}\log h_n^{-1}.
\end{split}
\end{equation*}
Next we have for $i=0,1,2,3$
\begin{equation}
\begin{split}
    &\biggl\Vert\frac{1}{n}\stackrel[j=1]{n}{\sum}(\Delta X)_j^i\bigl\{\widehat{K}_{i,n}\bigl(\cdot-(\Delta X)_j\bigr)-K_{i,n}\bigl(\cdot-(\Delta X)_j\bigr)\bigr\}\biggr\Vert_{I} \\
    &\hspace{4.5cm}=\mathcal{O}_{p}\bigl(n^{-1/2}\Delta^{(i+1/2)\wedge1} h_n^{-1}\log h_n^{-1}\bigr).
 \end{split}
\label{K1dif}
\end{equation} 
and analogously
\begin{equation}
\begin{split}
 &\biggl\Vert\frac{1}{n}\stackrel[j=1]{n}{\sum}\bigl\{\widehat{K}^2_{0,n}\bigl(\cdot-(\Delta X)_j\bigr)-K^2_{0,n}\bigl(\cdot-(\Delta X)_j\bigr)\bigr\}\biggr\Vert_{I} =O_{p}\bigl(n^{-1/2}\Delta^{3/2}h_n^{-2}\log h_n^{-1}\bigr),\\
 & \biggl\Vert\frac{1}{n}\stackrel[j=1]{n}{\sum}(\Delta X)^2_j\bigl\{\widehat{K}^2_{1,n}\bigl(\cdot-(\Delta X)_j\bigr)-K^2_{1,n}\bigl(\cdot-(\Delta X)_j\bigr)\bigr\}\biggr\Vert_{I} =O_{p}\bigl(n^{-1/2}\Delta^{5/2} h_n^{-2}\log h_n^{-1}\bigr),\\
   &\biggl\Vert\frac{1}{n}\stackrel[j=1]{n}{\sum}(\Delta X)^4_j\bigl\{\widehat{K}^2_{2,n}\bigl(\cdot-(\Delta X)_j\bigr)-K^2_{2,n}\bigl(\cdot-(\Delta X)_j\bigr)\biggr\}\biggr\Vert_{I} =O_{p}\bigl(n^{-1/2}\Delta h_n^{-1}\log h_n^{-1}\bigr),\\
   & \biggl\Vert\frac{1}{n}\stackrel[j=1]{n}{\sum}(\Delta X)_j^6\bigl\{\widehat{K}^2_{3,n}\bigl(\cdot-(\Delta X)_j\bigr)-K^2_{3,n}\bigl(\cdot-(\Delta X)_j\bigr)\bigr\}\biggr\Vert_{I} =O_{p}\bigl(n^{-1/2}\Delta h_n^{-2}\log h_n^{-1}\bigr).
 \end{split}
\label{K2dif}
\end{equation} 
By the previous statement, we conclude that 
\begin{equation*}
\widehat{s}^{2}_{n}(x)=\widetilde{s}^{2}_{n}(x)+\mathcal{O}_{p}\bigl(n^{-1/2}\Delta h_n^{-2} \log h_n^{-1}\bigr)
\end{equation*}
uniformly in $x\in I $,  where
\begin{equation*}
\begin{split}
\widetilde{s}^{2}_{n}(x)&:=\stackrel[m=0]{3}{\sum}\biggl(\frac{1}{n}\stackrel[j=0]{n}{\sum}\bigl[(\Delta X)_{j}^{2m}K_{m,n}^{2}\bigl(x-(\Delta X)_{j}\bigr)\bigr]\biggr. \\
&\biggl. \hspace{2cm} -\left\{ \frac{1}{n}\stackrel[j=0]{n}{\sum}\bigl[(\Delta X)_{j}^{m}K_{m,n}\bigl(x-(\Delta X)_{j}\bigr)\bigr]\right\} ^{2}\biggr).
\end{split}
\end{equation*}
Moreover, since $\inf_{x\in I}s^2(x)\apprge\Delta h_n^{-3}$ we obtain that 
\begin{equation*}
\frac{n^{-1/2}\Delta h_n^{-2}\log h_n^{-1}}{\Delta h_n^{-3}}=n^{-1/2}h_n\log h_n^{-1}\ll \sqrt{\frac{\log n}{n\Delta h_n^{-1}}}.
\end{equation*}
Finally let us prove that $\bigl\Vert\widetilde{s}_{n}^{2}(\cdot)/s^{2}(\cdot)-1\bigr\Vert _{I}=o_{P}\bigl\{\bigl((n\Delta h_n^{-1})^{-1}\log n\bigr)^{1/2}\bigr\}.$ It follows from \eqref{SupK0}, \eqref{SupyK1}, \eqref{SupyK2} und \eqref{SupyK3},
\begin{equation*}
\begin{split}
&\bigl\Vert \mathsf{E}\bigl[K_{0,n}\bigl(\cdot-(\Delta X)_{1}\bigl)/s(\cdot)\bigr]\bigr\Vert_{I}\lesssim\Delta h_n^{-1}/\sqrt{\Delta h_n^{-3}}\lesssim (\Delta h_n)^{1/2}\\
&\bigl\Vert \mathsf{E}\bigl[(\Delta X)_{1}K_{1,n}\bigl(\cdot-(\Delta X)_{1}\bigr)/s(\cdot)\bigr]\bigr\Vert_{I}\lesssim\Delta/\sqrt{\Delta h_n^{-3}}\lesssim\Delta^{1/2}h_n^{3/2} \\
&\bigl\Vert \mathsf{E}\bigl[(\Delta X)_{1}^{2}K_{2,n}\bigl(\cdot-(\Delta X)_{1}\bigr)/s(\cdot)\bigr]\bigr\Vert_{I}\lesssim\Delta/\sqrt{\Delta h_n^{-3}}\lesssim \Delta^{1/2}h_n^{3/2}\\
&\bigl\Vert \mathsf{E}\bigl[(\Delta X)_{1}^{3}K_{3,n}\bigl(\cdot-(\Delta X)_{1}\bigr)/s(\cdot)\bigr]\bigr\Vert_{I}\lesssim\Delta h_n^{-1}/\sqrt{\Delta h_n^{-3}}\lesssim (\Delta h_n)^{1/2}.
\end{split}
\end{equation*}
Note that
\begin{equation*}
\sup_{f_{i,n}\in\mathcal{F}_{i,n}^{2}}\mathsf{E}\biggl[\stackrel[i=0]{3}{\sum}f^{2}_{i,n}((\Delta X)_{1})\biggr]\lesssim\sup_{f_{i,n}\in\mathcal{F}_{i,n}}\mathsf{E}\biggl[\stackrel[i=0]{3}{\sum}f^{4}_{i,n}((\Delta X)_{1})\biggr]\lesssim h_n/\Delta
\end{equation*} 
and
\begin{equation*}
\sup_{f_{i,n}\in\mathcal{F}_{i,n}^{2}}\left\Vert \stackrel[i=0]{3}{\sum}f_{i,n} \right\Vert _{\mathbb{R}}\lesssim\sup_{f_{i,n}\in\mathcal{F}_{i,n}}\left\Vert \stackrel[i=0]{3}{\sum}f_{i,n}^{2}\right\Vert _{\mathbb{R}}\lesssim h_n/\Delta.
\end{equation*}
Together with Corollary 5.1 in \cite{chernozhukovand2014a} and Theorem 2.14.1 in \cite{van1996weak} we get   
 \begin{equation*}
\begin{split}
&\biggl\Vert\frac{1}{n}\stackrel[i=0]{3}{\sum}\biggl[\stackrel[j=1]{n}{\sum}\bigl(f_{i,n}^{2}((\Delta X)_{j})-\mathsf{E}\bigl[f_{i,n}^{2}((\Delta X)_{j})\bigr]\bigr)\biggr]\biggr\Vert _{\mathcal{F}_{i,n}}\lesssim \sqrt{\frac{\log n}{n\Delta h_n^{-1}}}+\frac{\log n}{n\Delta h_n^{-1}}\lesssim \sqrt{\frac{\log n}{n\Delta h_n^{-1}}},\\
 & \biggl\Vert \frac{1}{n}\stackrel[i=0]{3}{\sum}\biggl[\stackrel[j=1]{n}{\sum}\bigl(f_{i,n}((\Delta Z)_{j})-\mathsf{E}\bigl[f_{i,n}((\Delta Z)_{j})\bigr])\biggr]\biggr\Vert _{\mathcal{F}_{i,n}}\lesssim \sqrt{\frac{1}{n\Delta h_n^{-1}}}\ll\sqrt{\frac{\log n}{n\Delta h_n^{-1}}}.
\end{split}
\end{equation*}
Finally we have $\bigl\Vert\widetilde{s}_{n}^{2}(\cdot)/s^{2}(\cdot)-1\bigr\Vert _{I}=\mathcal{O}_{P}\bigl((n\Delta h_n^{-1})^{-1}\log n\bigr)^{1/2}.$
This completes the proof.
\end{proof}
We get from (\ref{K1dif})-(\ref{K2dif}), 
 \begin{equation*}
 \begin{split}
& \biggl\Vert \biggl(\stackrel[j=1]{n}{\sum}\omega_{j}\biggr)\biggl\{\frac{1}{n}\stackrel[j=1]{n}{\sum}(\Delta X)_j^m\bigl\{\widehat{K}_{m,n}\bigl(\cdot-(\Delta X)_j\bigr)-K_{m,n}\bigl(\cdot-(\Delta X)_j\bigr)\bigr\}\biggr\}\biggr\Vert_{I}\\
 &\hspace{7cm}=\mathcal{O}_{p}\bigl(\Delta^{1/2} h_n^{-1}\log h_n^{-1}\bigr).
 \end{split}
\end{equation*}
Hence
\begin{equation*}
\begin{split}
&\biggl\Vert\stackrel[j=1]{n}{\sum}\omega_{j}\bigl\{ (\Delta X)_j^m\bigl\{\widehat{K}_{m,n}\bigl(\cdot-(\Delta X)_j\bigr)-K_{m,n}\bigl(\cdot-(\Delta X)_j\bigr)\bigr\}\bigr\}\biggr\Vert_{I}\\
&\hspace{1cm}\leq \mathcal{O}_{p}\bigl(n^{-1/2}h_n^{-1}\log h_n^{-1}\bigr)\mathsf{E}\biggl[\sqrt{\stackrel[j=1]{n}{\sum}(\Delta X)_{j}^{2m}}\,\,\biggr]=\mathcal{O}_{p}\bigl(\Delta^{1/2}h_n^{-1}\log h_n^{-1}\bigr).
\end{split}
\end{equation*}
Since $\bigl\Vert 1/\widehat{s}_{n}(x)\bigr\Vert _{I}=\mathcal{O}_{p}\bigl(1/\sqrt{\Delta h_n^{-3}}\bigr),$ using Lemma \ref{sdachs}  we obtain 
\begin{equation}
\widehat{T}_{n}^{MB}(x)=\biggl[1+o_{p}\biggl(\sqrt{\frac{\log n}{n\Delta h_n}}\biggr)\biggr]\biggl(T_{n}^{MB}(x)+\mathcal{O}_{p}\left( h_n^{1/2}\log h_n^{-1 }\right)\biggr) 
\label{TMB}
\end{equation}
Applying Theorem 2.2 in \cite{chernozhukovand2016} with $B(f)\equiv0, A\lesssim1, v\lesssim1, \sigma\sim1, b\lesssim\frac{\sqrt{h_n}}{\sqrt{\Delta}}, \gamma\lesssim\frac{1}{\log n}$ and sufficiently large $q,$ we conclude that there exists a random variable $V_{n}^{\xi}$ whose conditional distribution given  $\mathsf{D}_{n}$ is identical to the distribution of $\bigl\Vert U_{n}\bigr\Vert_{\mathcal{F}_{i,n}}$, that is, $\mathrm{P}\bigl\{\bigl\Vert V_{n}^{\xi}\bigr\Vert_{I}\leq z\mid\mathsf{D}_{n}\bigr\} =\mathrm{P}\bigl\{ \bigl\Vert T_{n}^{G}\bigr\Vert_{I}\leq z\bigr\}$ for all $z\in\mathbb{R}$ almost surely, and such that 
\begin{equation}
\label{DifTMB}
\bigl|\bigl\Vert\mathsf{G}_{n}^{\xi}\bigr\Vert_{\mathcal{F}_{i,n}}-V_{n}^{\xi}\bigr|=\mathcal{O}_{P}\biggl\{ \frac{(\log n)^{2+1/q}}{n^{1/2-1/q}\sqrt{\Delta h_n^{-1}}}+\frac{(\log n)^{7/4+1/q}}{(n\Delta h_n^{-1})^{1/4}}\biggr\} =\mathcal{O}_{P}\biggl\{ \frac{(\log n)^{7/4+1/q}}{(n\Delta h_n^{-1})^{1/4}}\biggr\}
\end{equation}
This in turn implies that there exists a sequence of constants $\varepsilon_{n}\to0$ such that 
\begin{equation*}
\mathrm{P}\biggl\{ \bigl|\bigl\Vert \mathsf{G}_{n}^{\xi}\bigr\Vert _{\mathcal{F}_{i,n}}-V_{n}^{\xi}\bigr|\geq\varepsilon_{n}\frac{(\log n)^{7/4+1/q}}{(n\Delta h_n)^{1/4}}\mid\mathsf{D}_{n}\biggr\}\stackrel{p}{\to}0.
\end{equation*} 
The condition \ref{ass5} of Assumption~\ref{assumption1} guarantees that the expression \eqref{DifTMB} converges to \(0\) and with speed faster than one of the expression \eqref{DifT}.
Since $\bigl\Vert \mathbb{G}_{n}^{\xi}\bigr\Vert _{\mathcal{F}_{i,n}}=\bigl\Vert T_{n}^{MB}\bigr\Vert _{I}$, we get together with the bound $\mathsf{E}\bigl[\bigl\Vert T_{n}^{G}\bigr\Vert_{I}\bigr]\lesssim(\log n)^{1/2}$ and the anti-concentration inequality (\ref{anticon}), 
\begin{equation*}
\begin{split}
\mathrm{P}\bigl\{\bigl\Vert T_{n}^{MB}\bigr\Vert_{I}\leq z\mid\mathsf{D}_{n}\bigr\}  & \leq\mathrm{P}\biggl\{ \bigl\Vert V_{n}^{\xi}\bigr\Vert_{I}\leq z+\varepsilon_{n}h_n^{1/2}\log h_n^{-1 }\mid\mathsf{D}_{n}\biggr\} +o_{p}(1)\\
 & =\mathrm{P}\biggl\{\bigl\Vert T_{n}^{G}\bigr\Vert_{I}\leq z+\varepsilon_{n}h_n^{1/2}\log h_n^{-1 }\biggr\} +o_{p}(1)\leq\\
 & \leq\mathrm{P}\bigl\{\bigl\Vert T_{n}^{G}\bigr\Vert_{I}\leq z\bigr\} +o_{p}(1)
\end{split}
\end{equation*}
For the same reason, we conclude that
\begin{equation*}
\mathrm{P}\bigl\{\bigl\Vert T_{n}^{MB}\bigr\Vert_{I}\leq z\mid\mathsf{D}_{n}\bigr\} \geq\mathrm{P}\bigl\{\bigl\Vert T_{n}^{G}\bigr\Vert_{I}\leq z\bigr\} -o_{p}(1).
\end{equation*} 
This argument shows together with (\ref{TMB}) that 
\begin{equation}
\sup_{z\in\mathsf{R}}\bigl|\mathrm{P}\bigl\{ \bigl\Vert \widehat{T}_{n}^{MB}(\cdot)\bigr\Vert _{I}\leq z\mid\mathsf{D}_{n}\bigr\}-\mathrm{P}\bigl\{\bigl\Vert T_{n}^{G}\bigr\Vert_{I}\leq z\bigr\} \bigr|\stackrel{p}{\to}0.
\label{supTmb}
\end{equation}
To conclude the proof, it remains to show that
\begin{equation}
\mathrm{P}\bigl\{\nu(x)\in\widehat{\mathcal{C}}_{1-\tau}^{MB}(x)\quad\forall\, x\in I\bigr\} \to1-\tau.
\end{equation}
Let us recall that it follows from Theorem \ref{thm1} together with the bound
 $\mathsf{E}\bigl[\bigl\Vert T_{n}^{G}\bigr\Vert_{I}\bigr]\lesssim(\log n)^{1/2}$ that
\begin{equation*}
\rho(x)\in\widehat{\mathcal{C}}_{1-\tau}^{MB}(x)\quad\forall x\,\in I, \text{ if and only if } \biggl \Vert \frac{\sqrt{n}\Delta (\widehat{\rho}_{n}(\cdot)-\rho(\cdot))}{\widehat{s}_{n}(\cdot)}\biggr\Vert _{I}\leq\widehat{c}_{n}^{MB}(1-\tau)
\end{equation*}
and we have $\bigl\Vert T_{n}\bigr\Vert_{I}=\mathcal{O}_{p}\bigl\{ (\log n)^{1/2}\bigr\}.$
Let us remark that
\begin{equation*}
\begin{split}
\frac{\sqrt{n}\Delta(\widehat{\rho}_{n}(x)-\rho(x))}{\widehat{s}_{n}(x)} & =\frac{s(x)}{\widehat{s}_{n}(x)}\frac{\sqrt{n}\Delta(\widehat{\rho}_{n}(x)-\rho(x))}{s(x)}\\
 & =\bigl(1+o_{p}\bigl\{ n^{-1/2}\log h_n^{-1}\bigr\} \bigr)\biggl[T_{n}(x)+o_{p}\bigl( h_n^{1/2}\log h_n^{-1}\bigr) \biggr]\\
 & =T_{n}(x)+o_{p}\bigl( h_n^{1/2}\log h_n^{-1}\bigr). 
\end{split}
\end{equation*}
Now if we recall the conclusion of Theorem \ref{thm1} and the anti-concentration inequality (\ref{anticon}), we get 
\begin{equation}
\sup_{z\in\mathbb{R}}\biggl|\mathrm{P}\biggl\{\biggl\Vert \frac{\sqrt{n}\Delta\bigl(\widehat{\rho}_{n}(\cdot)-\rho(\cdot)\bigr)}{\widehat{s}_{n}(\cdot)}\biggr\Vert _{I}\leq z\biggr\} -\mathrm{P}\bigl\{\bigl\Vert T_{n}^{G}\bigr\Vert_{I}\leq z\bigr\} \biggr|\to0.
\label{supNull}
\end{equation}
Note that due to (\ref{supTmb}) together with argument similar to Step 3 in the proof of Theorem \ref{theorem2} in \cite{kato2018uniform}, we can find a sequence of constants $\varepsilon_{n}'\to0$ such that
\begin{equation}
c_{n}^{G}(1-\tau-\varepsilon_{n}')\leq\widehat{c}_{n}^{MB}(1-\tau)\leq c_{n}^{G}(1-\tau+\varepsilon_{n}')
\label{cInt}
\end{equation}
with probability approaching one. 
This implies that
\begin{equation*}
\begin{split} 
& \mathrm{P}\biggl\{\biggl\Vert \frac{\sqrt{n}\Delta(\widehat{\rho}_{n}(\cdot)-\rho(\cdot))}{\widehat{s}_{n}(\cdot)}\biggr\Vert _{I}\leq\widehat{c}_{n}^{MB}(1-\tau)\biggr\} \\
 &\hspace{2cm} \stackrel{(\ref{cInt})}{\leq}\mathrm{P}\biggl\{ \biggl\Vert \frac{\sqrt{n}\Delta (\widehat{\rho}_{n}(\cdot)-\rho(\cdot))}{\widehat{s}_{n}(\cdot)}\biggr\Vert _{I}\leq c_{n}^{G}(1-\tau+\varepsilon_{n}')\biggr\} +o(1)\\
 &\hspace{2cm} \stackrel{(\ref{supNull})}{=}\mathrm{P}\bigl\{ \bigl\Vert T_{n}^{G}\bigr\Vert_{I}\leq c_{n}^{G}(1-\tau+\varepsilon_{n}')\bigr\} +o(1)=1-\tau+o(1)
\end{split}
\end{equation*} 
For the same reason, we have upper bound for the probability,  which has the form $1- \tau-o(1)$.
Due the Borell-Sudakov-Tsirelson inequality (see Lemma A.2.2 in  \cite{van1996weak} for more details) we have 
\begin{equation*}
c_{n}^{G}(1-\tau+\varepsilon_{n}')\lesssim\mathsf{E}\bigl[\bigl\Vert T_{n}^{G}\bigr\Vert_{I}\bigr]+\sqrt{1+\log(1/(\tau-\varepsilon'))}\lesssim(\log n)^{1/2}.
\end{equation*}
If we combine this with \eqref{cInt}, we get $\widehat{c}_{n}^{MB}(1-\tau)=O_{P}\bigl(\sqrt{\log n}\bigr)$ with the supremum width of the confidence band $\mathcal{C}_{1-\tau}^{MB}$ bounded as 
\begin{equation*}
\begin{split}
2\,\underset{x\in I}{\sup}\,\frac{\widehat{s}_{n}(x)}{\sqrt{n}\Delta}\,\widehat{c}_{n}^{MB}(1-\tau)&\lesssim\bigl( 1+o_{P}(1)\bigr) \frac{\sup_{x\in I}s(x)}{\sqrt{n}\Delta}\,\widehat{c}_{n}^{MB}(1-\tau)\\
&=\mathcal{O}_{p}\bigl((n\Delta h_n^3)^{-1/2} \sqrt{\log n}\bigr)
\end{split}
\end{equation*} 
 This observation completes the proof of Theorem \ref{theorem2} for the multiplier bootstrap case.

\bibliographystyle{plain} 
\bibliography{bibliography-final-1}

\end{document}